\title[Relations among some conjectures]
{Relations among some conjectures on the M\"obius function and the Riemann zeta-function}
\author[S. Inoue]{Sh\={o}ta Inoue}
\address{Graduate School of Mathematics, Nagoya University,
Furocho, Chikusaku, Nagoya 464-8602, Japan}
\email{m16006w@math.nagoya-u.ac.jp}
\keywords{M\"{o}bius Function, Riesz Mean of an Arithmetic Function, Riemann Zeta-Function, Riemann Hypothesis, Simple Zero Conjecture, 
Mertens Hypothesis, Weak Mertens Hypothesis, Gonek-Hejhal Conjecture, Linear Independence Conjecture}
\subjclass[2010]{Primary 11M26; Secondary 11M06}
\theoremstyle{break} 
\newtheorem{theorem}{Theorem}
\newtheorem{lemma}{Lemma}
\newtheorem{corollary}{Corollary}
\newtheorem{conjecture}{Conjecture}
\newtheorem{proposition}{Proposition}
\theoremstyle{plain}
\newtheorem{fact}{Fact}
\theoremstyle{remark}
\newtheorem{remark}{Remark}
\theoremstyle{definition}
\newtheorem*{acknow*}{Acknowledgments}
\numberwithin{equation}{section}
\newcommand{\e}{\varepsilon}
\newcommand{\s}{\sigma}
\newcommand{\normal}{\normalfont}
\renewcommand{\a}{\alpha}
\renewcommand{\l}{\left}
\renewcommand{\r}{\right}
\renewcommand{\d}{\displaystyle}
\renewcommand{\Re}{\mathrm{Re}}
\begin{document}

\begin{abstract}
We discuss the multiplicity of the non-trivial zeros of the Riemann zeta-function and the summatory function $M(x)$ of the M\"obius function.
The purpose of this paper is to consider two open problems under some conjectures.
One is that whether all zeros of the Riemann zeta-function are simple or not. 
The other problem is that whether $M(x) \ll x^{1/2}$ holds or not. 
First, we consider the former problem.
It is known that the assertion $M(x) = o(x^{1/2}\log{x})$ is a sufficient condition for the proof of the simplicity of zeros.
However, proving this assertion is presently difficult.
Therefore, we consider another sufficient condition for the simplicity of zeros that is weaker than the above assertion
in terms of the Riesz mean $M_{\tau}(x) = {\Gamma(1+\tau)}^{-1}\sum_{n \leq x}\mu(n)(1 - \frac{n}{x})^{\tau}$. 
We conclude that the assertion $M_{\tau}(x) = o(x^{1/2}\log{x})$ for a non-negative fixed $\tau$ is a
sufficient condition for the simplicity of zeros. 
Also, we obtain an explicit formula for $M_{\tau}(x)$. 
By observing the formula, we propose a conjecture, in which $\tau$ is not fixed, but depends on $x$.
This conjecture also gives a sufficient condition, which seems easier to approach, for the simplicity of zeros.
Next, we consider the latter problem. 
Many mathematicians believe that the estimate $ M(x) \ll x^{1/2}$ fails, but this is not yet disproved.
In this paper we study the mean values $\int_{1}^{x}\frac{M(u)}{u^{\kappa}}du$ for any real $\kappa$ under the weak Mertens Hypothesis
$\int_{1}^{x}( M(u)/u)^2du \ll \log{x}$. 
We obtain the upper bound of $\int_{1}^{x}\frac{M(u)}{u^{\kappa}}du$ under the weak Mertens Hypothesis.
We also have $\Omega$-result of this integral unconditionally, and so we find that
the upper bound which is obtained in this paper of this integral is the best possible estimation.
\end{abstract}

\maketitle


                                  \section{\bf Introduction and statement of results}


We define the summatory function $M(x) := \sum_{n \leq x}\mu(n)$, where $\mu(n)$ is the M\"obius function.
The M\"obius function is defined by 
\begin{align*}
\mu(n) 
= \l\{
\begin{array}{ll}
	1 		& \text{if \; $n = 1$},\\
	(-1)^{k} 	& \text{if \; $n$ is the product of $k$ different primes},\\
	0 		& \text{otherwise}.
\end{array}
\r.
\end{align*}
We discuss the Riesz means
\begin{align*}
M_\tau(x) := \frac{1}{\Gamma(\tau+1)}\sum_{n \leq x}\mu(n)\l( 1 - \frac{n}{x} \r)^\tau
\end{align*}
for $\tau \geq 0$ in this paper. We consider both cases when $\tau$ is constant and when $\tau$ depends on $x$.


It is well known that the original Mertens conjecture states that 

\begin{align}	\label{OMH}
	|M(x)| \leq x^{1/2}
\end{align}
for $x \geq 1$. 
However, this conjecture (\ref{OMH}) was disproved by A. M. Odlyzko and H. J. J. te Riele in \cite{OR}.
Recently, D. G. Best and T. S. Trudgian \cite{BT} disproved that $|M(x)| < cx^{1/2}$ for $c < 1.6383$.
Many mathematicians believe that even
\begin{align}		\label{MH}
M(x) \ll x^{1/2}
\end{align} 
fails, but this is as yet unproved.
The following fact is a crucial reason why many mathematicians believe that the inequality (\ref{MH}) is false.

\begin{fact}	\label{flm}
If the Linear Independence Conjecture is true, then the inequality (\ref{MH}) is false.
\end{fact}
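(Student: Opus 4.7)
The plan is to produce arbitrarily large positive values of $M(x)/\sqrt{x}$, thereby contradicting the putative bound $M(x) \ll x^{1/2}$. The Linear Independence Conjecture in its standard formulation presupposes the Riemann Hypothesis and the simplicity of the non-trivial zeros, so one may start from the explicit formula
\begin{align*}
\frac{M(x)}{\sqrt{x}} = 2\,\Re\!\sum_{0 < \gamma < T} \frac{x^{i\gamma}}{\rho \zeta'(\rho)} + E(x, T),
\end{align*}
obtained by applying Perron's formula to $1/\zeta(s)$ and shifting the contour across the critical line so as to collect simple residues at each $\rho = \frac{1}{2} + i\gamma$. Here $T$ is a truncation height and $E(x, T)$ absorbs the residue at $s = 0$, the trivial zero contributions, and the usual Perron truncation error.

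Next, I would fix a finite $N$ and consider only the first $N$ positive ordinates $\gamma_1, \ldots, \gamma_N$. Writing $1/(\rho_k \zeta'(\rho_k)) = r_k e^{i\phi_k}$ with $r_k > 0$, the Linear Independence Conjecture asserts that $\gamma_1, \ldots, \gamma_N$ are linearly independent over $\QQ$, so by Kronecker's theorem the orbit
\begin{align*}
\{(\gamma_1 y, \ldots, \gamma_N y) \bmod 2\pi : y > 0\}
\end{align*}
is equidistributed in $[0, 2\pi)^N$. In particular, for any $\e > 0$ there exist arbitrarily large $y > 0$ for which every $\gamma_k y + \phi_k$ lies within $\e$ of $0$ modulo $2\pi$. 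Setting $x = e^y$ aligns the first $N$ terms constructively and yields
\begin{align*}
\limsup_{x \to \infty} \frac{M(x)}{\sqrt{x}} \geq 2\sum_{k=1}^{N} \frac{1}{|\rho_k \zeta'(\rho_k)|} - O(N\e) - \sup_{x \geq 2}|E(x, T)|.
\end{align*}

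If $M(x) \ll x^{1/2}$ were to hold, the left side would be bounded uniformly in $N$, forcing $\sum_{\gamma > 0} 1/|\rho \zeta'(\rho)|$ to converge. The hard part is therefore to establish
\begin{align*}
\sum_{\gamma > 0} \frac{1}{|\rho \zeta'(\rho)|} = \infty,
\end{align*}
which I would approach via mean value estimates for $|\zeta'(\rho)|$. Under RH, Gonek's estimate $\sum_{0 < \gamma \leq T}|\zeta'(\rho)|^2 \asymp T(\log T)^4$ combined with the Riemann--von Mangoldt formula $N(T) \sim (T/2\pi)\log T$ and the Cauchy--Schwarz inequality supplies $\sum_{0 < \gamma \leq T} 1/|\zeta'(\rho)| \gg T(\log T)^{-1/2}$; a dyadic decomposition in $\gamma$ then transfers this into the desired divergence. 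The remaining technical step is to verify that the error $E(x, T)$ does not swamp the main term when $x$ is restricted to the sequence produced by Kronecker's theorem; this is a routine Perron-type estimate under RH but must be carried out carefully.
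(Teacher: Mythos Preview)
The paper does not supply its own proof of this fact; it simply attributes it to Ingham (1942). However, Ingham's method---essentially the Landau--Mellin oscillation argument that the paper deploys in Lemma~\ref{divIMLem2} and Theorem~\ref{IM}---differs from your proposal in a way that is not merely cosmetic: your approach has a real gap in the treatment of $E(x,T)$.

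You call the control of $\sup_{x\ge 2}|E(x,T)|$ ``a routine Perron-type estimate under RH,'' but it is precisely the obstruction. If $E(x,T)$ denotes the Perron truncation error together with the horizontal contour integrals at a \emph{fixed} height $T$ (and $T$ must be fixed, since Kronecker is applied to finitely many ordinates), then after dividing by $\sqrt{x}$ these terms are of order $\sqrt{x}(\log x)^{O(1)}/T$ and blow up as $x\to\infty$. If instead $E(x,T)$ denotes the tail $2\,\Re\sum_{\gamma>T}x^{i\gamma}/(\rho\zeta'(\rho))$ of the explicit formula, no uniform-in-$x$ bound is available: Bartz's formula converges only conditionally along a special sequence $T_\nu$, and nothing under RH prevents the tail from being as large in modulus as the head. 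Indeed, assuming $|M(x)|\le C\sqrt{x}$ and arranging the first $N$ terms to be near $S_N:=2\sum_{k\le N}1/|\rho_k\zeta'(\rho_k)|$, the only conclusion is that the tail is near $-S_N$ along that sequence; your displayed inequality degenerates to $C\ge -C$.

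Ingham's device avoids any pointwise explicit formula. Assuming $M(x)\le C\sqrt{x}$, one forms
\[
\int_{1}^{\infty}\frac{M(x)-C\sqrt{x}}{x^{s+1}}\prod_{k=1}^{K}\bigl(1+\cos(\phi_k-\gamma_k\log x)\bigr)\,dx,
\]
whose integrand is eventually $\le 0$; expanding the product and examining the pole at $s=\tfrac12$ (exactly as in the paper's proof of Theorem~\ref{IM}) yields $C\ge\sum_{k\le K}1/|\rho_k\zeta'(\rho_k)|$ for every $K$, the Linear Independence Conjecture ensuring no spurious poles. Your divergence argument for $\sum_\gamma 1/|\rho\zeta'(\rho)|$ via Gonek's second moment and Cauchy--Schwarz is then the correct closing step.
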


The Fact \ref{flm} was shown by A. E. Ingham \cite{In} in 1942.
We note that the Linear Independence Conjecture is the following conjecture.


\begin{conjecture}[The Linear Independence Conjecture]

Assume that the Riemann zeta-function $\zeta(s)$ satisfies the Riemann Hypothesis.
Then the positive ordinates of distinct zeros are linearly independent over $\mathbb{Q}$.
\end{conjecture}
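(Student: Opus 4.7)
The Linear Independence Conjecture is one of the most stubborn open problems in analytic number theory, so in all honesty I cannot propose a credible proof strategy; what follows is rather a description of what a plan would have to accomplish and why every known plan falters. Unfolding the statement, under RH the non-trivial zeros are $\frac{1}{2} + i\g_k$ with $0 < \g_1 < \g_2 < \cdots$, and the assertion is that for every finite index set $\{k_1,\ldots,k_n\}$ and every $(c_1,\ldots,c_n) \in \ZZ^n \setminus \{0\}$ the linear form $c_1 \g_{k_1} + \cdots + c_n \g_{k_n}$ is non-zero. In particular this forces each individual $\g_k$ to be irrational---indeed transcendental.

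The plan, in outline, would be: (i) to attach to each zero an analytic or geometric structure in which transcendence techniques have traction---for instance, realising $\g_k$ as the imaginary part of a logarithm of an algebraic datum, or as a period of some motive---and then (ii) to invoke a Baker-type result on linear forms in logarithms, or a suitable period-theoretic independence theorem, to rule out non-trivial vanishing $\QQ$-linear combinations. An indirect variant uses the explicit formula: a hypothetical relation $\sum c_j \g_{k_j} = 0$ would produce an exact resonance in sums of the form $\sum_\g x^{i\g}$ appearing in the expansion of $M(x)$ or of $\psi(x)-x$, which one would then try to contradict by sharp oscillation estimates obtained from independent analytic input.

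The main obstacle, and the reason no approach currently works, is that there is no known arithmetic description of any single zero of $\zeta(s)$; it is not even known that $\g_1$ is irrational. Consequently step (i) has no point of entry, and the indirect route through the explicit formula is circular, since the sharpest known oscillation estimates for the relevant summatory functions rely on information about the zeros themselves. Accordingly I would not attempt a proof; in line with Fact \ref{flm} and the rest of the paper, the natural use of the Linear Independence Conjecture in this context is as a hypothesis from which to deduce arithmetic consequences, such as the failure of the Mertens-type bound $M(x) \ll x^{1/2}$.
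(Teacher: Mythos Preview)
Your assessment is correct: the Linear Independence Conjecture is stated in the paper as an open conjecture, not as a theorem to be proved, and the paper offers no proof of it whatsoever. It is introduced solely as a hypothesis---invoked in Fact~\ref{flm}, Theorem~\ref{IM}, and Corollary~\ref{formula}---from which consequences about $M(x)$ and $\int_1^x M(u)u^{-\kappa}\,du$ are drawn, exactly as you describe in your final paragraph.
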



The summatory function $M(x)$ is important in the study of prime numbers.
Actually, the Riemann Hypothesis is equivalent to the inequality $M(x) \ll x^{1/2+\varepsilon}$ for any positive $\varepsilon$, 
which is weaker than the inequality (\ref{MH}).

\footnote{The author cannot follow Bartz's proof of (\ref{exfoM_0}). However, instead of Lemma 1 in the paper \cite{B},
if we use Lemma \ref{zlb} in the present paper, it is possible to follow her proof.}
K. M. Bartz \cite{B} showed the following explicit formula for $M(x)$:
\begin{align}	\label{exfoM_0}
M(x) = 
&\lim_{T_\nu \rightarrow \infty}\sum_{|\gamma| < T_{\nu}}\frac{1}{(m(\rho)-1)!}\lim_{s \rightarrow \rho}\frac{d^{m(\rho)-1}}{ds^{m(\rho)-1}}
			\left((s-\rho)^{m(\rho)}\frac{x^{s}}{s\zeta(s)}\right)\\ \nonumber
&-2+\sum_{n=1}^{\infty}\frac{(-1)^{n-1}(x/2\pi)^{-2n}}{(2n)!n\zeta(2n+1)},
\end{align}
unconditionally, where $\rho = \beta + i\gamma$ denotes a non-trivial zero of $\zeta(s)$,
and $m(\rho)$ is the multiplicity of $\rho$.
We see that the multiplicity of zeros of the Riemann zeta-function is important to find the upper bound of $M(x)$.
In fact, it is known that
\begin{align*}
M(x) = \Omega_\pm\left(x^{1/2}(\log{x})^{m-1}\right)
\end{align*}
if $\zeta(s)$ has a zero of multiplicity $m$ (see p. 467 in H. L. Montgomery and R. C. Vaughan \cite{MV}). 
First we discuss whether the claim that all zeros of the Riemann zeta-function are simple (we abbreviate this as (SZC)) is true or not.
Today, many mathematicians believe that (SZC) is true and there are many works under (SZC).
H. M. Bui and D. R. Heath-Brown \cite{BB} showed that the rate of simple zeros of Riemann zeta-function 
is larger than about 70.37\% under the Riemann Hypothesis, and D. A. Goldston and S. M. Gonek \cite{GG} showed 
the upper bound 
$
m(\rho) \leq \left(\frac{1}{2} + o(1)\right)\frac{\log{|\gamma|}}{\log{\log{|\gamma|}}} 
$
under the Riemann Hypothesis.

Our first result is the following theorem for the multiplicity of the zeros of $\zeta(s)$.


\begin{theorem} \label{SZCApuRH}
Assume the Riemann Hypothesis. 
If there exists a positive number $\tau$ satisfying
$M_{\tau}(x) = o\l(x^{1/2}(\log{x})^{\alpha}\r)$, then the inequality
\begin{align*}
m(\rho) < \alpha + 1
\end{align*}
holds for any non-trivial zero $\rho$ of the Riemann zeta-function.
\end{theorem}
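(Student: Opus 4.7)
The plan is to promote the pointwise hypothesis on $M_\tau$ into a quantitative bound on the Mellin transform
\begin{align*}
\Phi(s) := \int_1^\infty M_\tau(x)\, x^{-s-1}\,dx
\end{align*}
as $\Re s \to 1/2^+$, and then derive a contradiction by locating a pole of order $\ge \alpha + 1$ on the critical line. First, for $\Re s > 1$, swapping sum and integral and substituting $u = n/x$ in the inner integral gives the closed form
\begin{align*}
\Phi(s) = \frac{1}{\Gamma(\tau+1)} \sum_{n=1}^{\infty} \frac{\mu(n)}{n^{s}} \int_0^1 u^{s-1}(1-u)^{\tau}\,du = \frac{\Gamma(s)}{\Gamma(s+\tau+1)\zeta(s)}.
\end{align*}
By the Riemann Hypothesis the right-hand side is meromorphic on $\Re s > 0$ with poles only at the non-trivial zeros of $\zeta$, all located on $\Re s = 1/2$.

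Second, I would exploit the $o$-hypothesis. Given $\eta > 0$, choose $X = X(\eta)$ so that $|M_\tau(x)| \le \eta\, x^{1/2}(\log x)^{\alpha}$ for $x \ge X$. For $s = \tfrac{1}{2} + \epsilon + it$ with $\epsilon > 0$, split the integral for $\Phi(s)$ at $X$. The head is bounded uniformly in $(\epsilon, t)$ by some $C_\eta$; for the tail, the substitution $u = \epsilon \log x$ yields
\begin{align*}
\int_X^\infty (\log x)^{\alpha}\, x^{-1-\epsilon}\,dx = \frac{1}{\epsilon^{\alpha+1}} \int_{\epsilon \log X}^\infty u^{\alpha} e^{-u}\, du \, \sim \, \frac{\Gamma(\alpha+1)}{\epsilon^{\alpha+1}} \qquad (\epsilon \to 0^+).
\end{align*}
Hence $\Phi(s)$ converges absolutely in $\Re s > 1/2$, agrees with the identity above there (by uniqueness of analytic continuation), and
\begin{align*}
\limsup_{\epsilon \to 0^+} \epsilon^{\alpha+1}\, \bigl|\Phi(\tfrac{1}{2} + \epsilon + it)\bigr| \le \eta\, \Gamma(\alpha+1) \qquad (t \in \mathbb{R}).
\end{align*}
Since $\eta > 0$ is arbitrary, this $\limsup$ is in fact $0$ for every $t$.

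Third, I would compare this with the local behaviour at a non-trivial zero $\rho_0 = \tfrac{1}{2} + i \gamma_0$ of multiplicity $m = m(\rho_0)$. Writing $\zeta(s) = (s - \rho_0)^m g(s)$ with $g$ analytic at $\rho_0$ and $g(\rho_0) \ne 0$, and using that $\Gamma(s)/\Gamma(s+\tau+1)$ is holomorphic and non-vanishing there, one obtains
\begin{align*}
\bigl|\Phi(\tfrac{1}{2} + \epsilon + i\gamma_0)\bigr| = C_0\, \epsilon^{-m}\bigl(1 + o(1)\bigr) \qquad (\epsilon \to 0^+)
\end{align*}
for some $C_0 > 0$. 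Multiplying by $\epsilon^{\alpha+1}$ produces $C_0\, \epsilon^{\alpha+1-m}(1 + o(1))$, which does \emph{not} tend to $0$ when $m \ge \alpha + 1$. This contradicts the bound of the second step at $t = \gamma_0$, so $m(\rho) < \alpha + 1$ for every non-trivial zero $\rho$.

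The main obstacle is the upgrade of the crude bound $|\Phi(s)| = O(\epsilon^{-\alpha-1})$ to $|\Phi(s)| = o(\epsilon^{-\alpha-1})$ near $\Re s = \tfrac{1}{2}$: this refinement is exactly what the $o$-hypothesis on $M_\tau$ (as opposed to a mere $O$-hypothesis) buys, and it is precisely what produces the strict inequality $m(\rho) < \alpha + 1$ rather than the weaker $m(\rho) \le \alpha + 1$.
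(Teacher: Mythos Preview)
Your proof is correct. Both you and the paper begin from the same Mellin identity
\[
\int_{1}^{\infty} M_\tau(x)\,x^{-s-1}\,dx \;=\; \frac{\Gamma(s)}{\zeta(s)\,\Gamma(s+\tau+1)} \qquad (\Re s > 1/2),
\]
extended to the half-plane $\Re s>1/2$ under RH, but you diverge from the paper in how you extract the multiplicity bound. The paper first proves the stronger oscillation result $M_\tau(x)=\Omega_{\pm}\bigl(x^{1/2}(\log x)^{m(\rho)-1}\bigr)$ via the classical Landau--Ingham device: one subtracts the comparison function $c\,x^{1/2}(\log x)^{m(\rho)-1}$, inserts the nonnegative weight $1+\cos(\phi-\gamma\log x)$ to translate the pole at $\rho$ to $s=1/2$, and reads off a sign contradiction in the limit $(\sigma-1/2)^{m(\rho)}I_\tau(\sigma)$. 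Theorem~\ref{SZCApuRH} is then an immediate corollary of this $\Omega_\pm$-lemma.

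Your route is more direct: you bound $|\Phi(\tfrac12+\epsilon+it)|$ in modulus by splitting the integral at $X(\eta)$, obtain $\epsilon^{\alpha+1}|\Phi(\tfrac12+\epsilon+it)|\to 0$ for every fixed $t$, and compare this with the exact local behaviour $|\Phi(\tfrac12+\epsilon+i\gamma_0)|\sim C_0\,\epsilon^{-m(\rho_0)}$. This bypasses the $(1+\cos)$ trick and the auxiliary comparison function entirely. The trade-off is that your argument yields only what Theorem~\ref{SZCApuRH} asserts, whereas the paper's approach additionally delivers the two-sided oscillation $\Omega_\pm$ with an explicit lower constant $m(\rho)\,|\Gamma(\rho)|\big/\bigl|\Gamma(1+\tau+\rho)\,\zeta^{(m(\rho))}(\rho)\bigr|$, information that is used elsewhere in the paper (e.g.\ in motivating Conjecture~\ref{Mycon}).
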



This theorem is shown by considering the $\Omega$-result on $M_{\tau}(x)$. 
The parameter $\tau$ is fixed in this theorem. 
However, the author believes that a similar result would hold for $\tau$ depending on $x$ under the assumption of
a certain upper bound of $M_{\tau}(x)$.
In order to discuss such situation (see Proposition \ref{SZCuMycon} below), we first prove the following explicit formula.

\begin{remark}
In what follows, $\varepsilon$ and $\delta$ denote any arbitrarily small positive numbers, 
not necessarily the same ones at each occurrence.
\end{remark}


\begin{theorem}   \label{exfoM}

There exists a sequence $\{T_\nu\}$ tending to infinity and satisfying
\begin{align*}
M_\tau(x) =& 
\lim_{T_{\nu} \rightarrow \infty}\sum_{|\gamma| < T_{\nu}}\frac{1}{(m(\rho)-1)!}\lim_{s \rightarrow \rho}\frac{d^{m(\rho)-1}}{ds^{m(\rho)-1}}
			\left((s-\rho)^{m(\rho)}\frac{x^{s}}{\zeta(s)}\frac{\Gamma(s)}{\Gamma(1+\tau+s)}\right)\\
&+\sum_{l=0}^{\infty}\underset{s=-l}{\rm Res}\l(\frac{x^s}{\zeta(s)}\frac{\Gamma(s)}{\Gamma(1+\tau+s)}\r)
\end{align*}
for any numbers $\tau > 0$, $x > 0$, 
and the series in the first term is uniformly convergent with respect to $x$ on any compact subset $K \subset (0, \infty)$ for $\tau \geq \delta$, 
and the series in the second term is absolutely and uniformly convergent with respect to $x \geq \delta$ for $\tau \geq 0$.
Furthermore, we have
\begin{align*}
\sum_{l=0}^{\infty}\underset{s=-l}{\rm Res}\l(\frac{x^s}{\zeta(s)}\frac{\Gamma(s)}{\Gamma(1+\tau+s)}\r) \ll 1.
\end{align*}
\end{theorem}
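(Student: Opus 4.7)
The plan is to derive the formula from the Mellin--Perron representation of $M_\tau(x)$ and then shift the contour leftward past all singularities, exploiting the Stirling decay $\Gamma(s)/\Gamma(1+\tau+s)\asymp|s|^{-\tau-1}$. Starting from the beta integral
\[
\int_0^1 y^{s-1}(1-y)^{\tau}\,dy \;=\; \frac{\Gamma(s)\,\Gamma(1+\tau)}{\Gamma(1+\tau+s)},
\]
Mellin inversion followed by termwise summation against $\mu(n)n^{-s}$ gives, for $c>1$,
\[
M_\tau(x) \;=\; \frac{1}{2\pi i}\int_{(c)}\frac{x^{s}\,\Gamma(s)}{\zeta(s)\,\Gamma(1+\tau+s)}\,ds,
\]
which is absolutely convergent for $\tau>0$ because the Gamma ratio decays like $|t|^{-\tau-1}$ while $1/\zeta$ is bounded on $\Re s=c$.

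Next I would fix a positive integer $U$ and choose a sequence $T_\nu\to\infty$ along which Lemma \ref{zlb} provides $|\zeta(\sigma+iT_\nu)|^{-1}\ll T_\nu^{\varepsilon}$ uniformly for $\sigma\in[-U-\tfrac12,\,c]$, then apply the residue theorem on the rectangle with vertices $c\pm iT_\nu$ and $-U-\tfrac12\pm iT_\nu$. The interior poles are the non-trivial zeros $\rho$ with $|\gamma|<T_\nu$ (of multiplicity $m(\rho)$) and the non-positive integers $-l$ for $0\le l\le U$, yielding truncations of the two series in the statement. The horizontal boundary pieces are majorised by $T_\nu^{-\tau-1+\varepsilon}\int x^{\sigma}\,d\sigma$, which vanishes as $T_\nu\to\infty$ uniformly on compacta of $(0,\infty)$; the left-hand piece, handled via the functional equation $\zeta(s)^{-1}=\chi(s)^{-1}\zeta(1-s)^{-1}$ together with Stirling $|\chi(-U-\tfrac12+it)|\gg\Gamma(U+3/2)/(2\pi)^{U+1}$, is $O\bigl((2\pi/x)^{U}/U!\bigr)$ and vanishes as $U\to\infty$ for every $x>0$. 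This simultaneously proves the explicit formula and the stated uniform convergence of the first series on compacta for $\tau\ge\delta$.

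For the second series I would compute the residues at $s=-l$ explicitly. At $s=0$ the residue is $-2/\Gamma(1+\tau)$; at odd $l\ge1$ only $\Gamma(s)$ is singular and the residue equals $(-1)^{l}x^{-l}/\bigl(l!\,\zeta(-l)\,\Gamma(1+\tau-l)\bigr)$; at $s=-2k$ with $k\ge1$ there is a double pole from the coincidence of the $\Gamma(s)$-pole with the trivial zero of $\zeta$, and the residue involves $x^{-2k}\log x$ together with $\zeta'(-2k)$, $\zeta''(-2k)$, and $\Gamma$-derivatives at $-2k$. (When $\tau=0$ the factor $1/\Gamma(1+\tau-l)$ vanishes at every simple pole and reduces each double pole to a simple one, exactly recovering Bartz's formula (\ref{exfoM_0}).) Using the Bernoulli-number asymptotics $|\zeta(-(2k-1))|\sim2(2k-1)!/(2\pi)^{2k}$ and $|\zeta'(-2k)|\sim(2k)!\,\zeta(2k+1)/(2(2\pi)^{2k})$, together with the reflection bound $|\Gamma(1+\tau-l)|^{-1}\ll\Gamma(l-\tau)$, each residue is $O\bigl((2\pi/x)^{l}/l!\bigr)$, giving absolute and uniform convergence for $x\ge\delta$ and directly yielding the $\ll 1$ bound.

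The main obstacle lies in the contour shift: the sequence $\{T_\nu\}$ must simultaneously control the horizontal integrals (so that $1/\zeta$ is not too large on $\Im s=\pm T_\nu$) and guarantee that the truncated sum over non-trivial zeros tends to the claimed conditional series. This is precisely the role of Lemma \ref{zlb}, which replaces the faulty Lemma~1 of \cite{B} noted in the footnote above; without it one cannot secure even the $\tau=0$ case. Once that selection is carried out, the residue bookkeeping---especially at the double poles on the negative axis---is laborious but routine.
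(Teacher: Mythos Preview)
Your approach is essentially the paper's: Mellin inversion, rectangular contour shift with Lemma~\ref{zlb} to select the heights $T_\nu$, the functional equation on the far left, and the explicit residue computation at the non-positive integers carried out in Section~4. The one slip is that Lemma~\ref{zlb} only yields $|\zeta(\sigma+iT_\nu)|^{-1}\ll T_\nu^{\varepsilon}$ on $\tfrac12\le\sigma\le 2$, not on the whole segment $[-U-\tfrac12,c]$; the paper therefore splits each horizontal integral at $\sigma=\tfrac12$ and at $\sigma=0$ and invokes Lemma~\ref{zaf} (the functional equation) on the portion $\sigma<\tfrac12$, where $1/\zeta$ in fact decays like $T_\nu^{\sigma-1/2}$, so your majorant $T_\nu^{-\tau-1+\varepsilon}\int x^{\sigma}\,d\sigma$ is valid once that piece is supplied.
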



From this formula we can prove various interesting consequences. 
Assume the Riemann Hypothesis and let $\rho_1 = \frac{1}{2} + i\gamma_1$ be a multiple zero of the Riemann zeta-function. 
By Leibniz's rule, we find that
\begin{align*}
M_\tau(x) = &
2x^{1/2}(\log{x})^{m\left(\rho_1\right)-1}m(\rho_1)\Re\l(\frac{\Gamma(\rho_1)}{\zeta^{(m(\rho_1))}(\rho_1)\Gamma(1+\tau+\rho_1)}x^{i\gamma_1}\r)\\
&+ \frac{2x^{1/2}}{(m(\rho_1)-1)!}
\sum_{l=0}^{m(\rho_1)-2}
\begin{pmatrix}
	m\l(\rho_1 \r)-1 \\
	l
\end{pmatrix}
(\log{x})^{l} \times\\
&\Re\l(\lim_{s \rightarrow \rho_1}\frac{d^{m(\rho_1)-1-l}}{ds^{m(\rho_1)-1-l}}\left( (s - \rho_1)^{m(\rho_1)}
\frac{\Gamma(s)}{\zeta(s)\Gamma(1+\tau+s)} \right) x^{i\gamma_1} \r) \nonumber\\ 
&+\lim_{T_\nu \rightarrow \infty}\underset{|\gamma| \not= |\gamma_1|}{\sum_{|\gamma| < T_{\nu}}}
\frac{1}{(m(\rho)-1)!}\lim_{s \rightarrow \rho}\frac{d^{m(\rho)-1}}{ds^{m(\rho)-1}}
			\left((s-\rho)^{m(\rho)}\frac{x^{s}}{\zeta(s)}\frac{\Gamma(s)}{\Gamma(1+\tau+s)}\right)\\
&+\sum_{l=0}^{\infty}\underset{s=-l}{\rm Res}\l(\frac{x^s}{\zeta(s)}\frac{\Gamma(s)}{\Gamma(1+\tau+s)}\r)\\
&=: 2x^{1/2}(\log{x})^{m\left(\rho_1\right)-1}m(\rho_1)
\Re\l(\frac{x^{i\gamma_1}\Gamma(\rho_1)}{\zeta^{(m(\rho_1))}(\rho_1)\Gamma(1+\tau+\rho_1)}\r) 
+ Y_{\tau, \rho_1}(x),
\end{align*}
say.  We believe that the first term of the right-hand side of the above formula dominates the behavior of $M_{\tau}(x)$. 
We propose the following conjecture.


\begin{conjecture} \label{Mycon}
Let $\rho$ be any non-trivial zeros of the Riemann zeta-function.
For any monotone positive valued function $\tau = \tau(x)$, we have
\begin{align*}
M_{\tau}(x) = \Omega\left(x^{1/2}(\log{x})^{m(\rho)-1}(\tau/e)^{-\tau-1}\right).
\end{align*}
\end{conjecture}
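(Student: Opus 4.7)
The approach starts from the explicit formula of Theorem \ref{exfoM}. For the given non-trivial zero $\rho_0 = \beta_0 + i\gamma_0$ of multiplicity $m = m(\rho_0)$, Leibniz's rule applied to the $(m-1)$-st derivative isolates from the sum over zeros a candidate main term
\begin{align*}
\mathcal{M}_{\rho_0}(x,\tau) := 2x^{\beta_0}(\log x)^{m-1}\, m\,\Re\!\left(\frac{\Gamma(\rho_0)}{\zeta^{(m)}(\rho_0)\,\Gamma(1+\tau+\rho_0)}\,x^{i\gamma_0}\right).
\end{align*}
Assuming the Riemann Hypothesis so that $\beta_0 = 1/2$, Stirling's formula in the form $\Gamma(1+\tau+s) \sim \sqrt{2\pi\tau}\,(\tau/e)^{\tau}(1+\tau)^{s}$ yields
\begin{align*}
\left|\frac{\Gamma(\rho_0)}{\Gamma(1+\tau+\rho_0)}\right| \asymp (\tau/e)^{-\tau-1}
\end{align*}
uniformly as $\tau\to\infty$. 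Thus $\mathcal{M}_{\rho_0}$ already has the size prescribed by the conjecture, multiplied by an oscillating factor $\Re(c(\rho_0)\,x^{i\gamma_0})$ with $c(\rho_0) \ne 0$.

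The proof then reduces to two tasks: (i) show that the remaining contributions to $M_\tau(x)$ --- the lower-order Leibniz terms at $\rho_0$, the sum over the other zeros, and the residue series at $s = -l$ (which is already $O(1)$ by Theorem \ref{exfoM}) --- are dominated by $x^{1/2}(\log x)^{m-1}(\tau/e)^{-\tau-1}$; and (ii) produce a sequence of $x$'s on which $\Re(c(\rho_0)\,x^{i\gamma_0})$ is bounded away from zero. For (i), I would split the zero sum by the range of $|\gamma|$: for $|\gamma| \le \tau$ the gamma ratio retains the factor $(\tau/e)^{-\tau-1}$ and the power $(\log x)^{m-1}$ arises only from zeros of multiplicity $m$ (others contributing at most $(\log x)^{m-2}$), while for $|\gamma| \gg \tau$ the gamma ratio decays like $|\gamma|^{-1-\tau}$ by complex Stirling, yielding absolute convergence of the tail with a substantial saving. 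For (ii), the monotonicity of $\tau(x)$ is crucial: on any short interval $[X, X(1+\eta)]$ with $\eta$ tending to $0$ slowly, $\tau(x)$ is nearly constant while $x^{i\gamma_0}$ sweeps a macroscopic arc of the unit circle, so a mean-square computation over the interval (compared termwise against the explicit formula) pins down some $x$ at which $|\mathcal{M}_{\rho_0}|$ attains its target order.

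The main obstacle is obtaining the bound in (i) uniformly as $\tau = \tau(x)$ ranges over all admissible monotone functions. In the regime $\tau = O(1)$ the conjecture specialises (up to a multiplicative constant) to the classical $\Omega_{\pm}(x^{1/2}(\log x)^{m-1})$-result cited in the introduction, while in the regime $\tau\to\infty$ the explicit formula exhibits substantial cancellation among the zeros. Controlling the off-diagonal zero sum uniformly across the whole transition seems to demand vertical-density information on the zeros weighted by $1/|\zeta^{(m(\rho))}(\rho)|$ that does not appear to follow from currently available zero-density theorems; this, I expect, is the principal technical reason the statement is posed only as a conjecture.
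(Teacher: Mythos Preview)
The statement you are addressing is Conjecture~\ref{Mycon}: the paper does not prove it and offers no proof to compare against. What the paper does provide, in the passage immediately preceding the conjecture, is exactly the heuristic you reproduce: the explicit formula of Theorem~\ref{exfoM} is expanded via Leibniz's rule at a chosen zero $\rho_1$, the candidate main term
\[
2x^{1/2}(\log x)^{m(\rho_1)-1}m(\rho_1)\,\Re\!\left(\frac{\Gamma(\rho_1)}{\zeta^{(m(\rho_1))}(\rho_1)\Gamma(1+\tau+\rho_1)}x^{i\gamma_1}\right)
\]
is isolated, and the remainder is collected as $Y_{\tau,\rho_1}(x)$. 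The author then states that he \emph{believes} this term dominates and poses the conjecture on that basis. Your Stirling computation recovering the factor $(\tau/e)^{-\tau-1}$ from $|\Gamma(\rho_0)/\Gamma(1+\tau+\rho_0)|$ is the natural complement to that heuristic and is consistent with the conjectured order.

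Your proposal is therefore not a proof but an honest sketch of what a proof would require, and you correctly locate the gap: step~(i), bounding the contribution of all other zeros by $o\bigl(x^{1/2}(\log x)^{m-1}(\tau/e)^{-\tau-1}\bigr)$ uniformly in the monotone function $\tau(x)$, is not established in the paper and, as you note, appears to need quantitative control on sums of $|\zeta^{(m(\rho))}(\rho)|^{-1}$ that is currently unavailable. Indeed, for \emph{fixed} $\tau$ the paper handles the analogous $\Omega$-result (Lemma~\ref{lemSZC}) not by bounding the remainder at all, but by the Landau--Ingham Mellin-transform argument with the kernel $1+\cos(\phi-\gamma\log x)$; that method sidesteps the remainder entirely but does not obviously extend to $x$-dependent $\tau$, which is another way to see why the variable-$\tau$ statement remains conjectural.
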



We can obtain the following result under Conjecture \ref{Mycon}.


\begin{proposition} \label{SZCuMycon}
Assume Conjecture \ref{Mycon}. If there exists a monotone positive valued function 
$\tau = \tau(x) \leq \frac{\log{\log{x}}}{\log{\log{\log{x}}}}(\alpha + o(1))$ satisfying
$M_{\tau}(x) \ll x^{1/2}(\log{x})^{\beta}$, then the inequality
\begin{align*}
m(\rho) \leq \alpha + \beta + 1
\end{align*}
holds for any non-trivial zero $\rho$ of the Riemann zeta-function.\\
In particular, if $\alpha + \beta < 1$, then (SZC) holds.
\end{proposition}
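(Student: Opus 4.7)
The plan is to combine the assumed upper bound $M_{\tau}(x) \ll x^{1/2}(\log{x})^{\beta}$ with the $\Omega$-lower bound supplied by Conjecture \ref{Mycon}, and then to convert the resulting comparison into a numerical constraint on $m(\rho)$ by exploiting the permitted growth of $\tau(x)$.

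Fix a non-trivial zero $\rho$ of $\zeta(s)$. By Conjecture \ref{Mycon} applied to the given monotone positive function $\tau = \tau(x)$, there is a sequence $x_{k} \to \infty$ along which
\begin{equation*}
\bigl| M_{\tau(x_{k})}(x_{k}) \bigr| \gg x_{k}^{1/2}(\log{x_{k}})^{m(\rho)-1}\bigl(\tau(x_{k})/e\bigr)^{-\tau(x_{k})-1}.
\end{equation*}
Since $M_{\tau}(x) \ll x^{1/2}(\log{x})^{\beta}$ holds for all $x$, evaluating this upper bound at $x = x_{k}$ and comparing with the lower bound above gives
\begin{equation*}
(\log{x_{k}})^{m(\rho)-1-\beta} \ll \bigl(\tau(x_{k})/e\bigr)^{\tau(x_{k})+1}.
\end{equation*}
Taking logarithms yields
\begin{equation*}
(m(\rho)-1-\beta)\log\log{x_{k}} \leq (\tau(x_{k})+1)\bigl(\log\tau(x_{k})-1\bigr) + O(1).
\end{equation*}

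Next, I would estimate the right-hand side using the hypothesis $\tau(x) \leq \frac{\log\log{x}}{\log\log\log{x}}(\alpha + o(1))$. When $\tau(x_{k}) \to \infty$, this bound forces $\log\tau(x_{k}) = \log\log\log{x_{k}} + O(\log\log\log\log{x_{k}})$, so $\log\tau(x_{k}) \sim \log\log\log{x_{k}}$, and therefore
\begin{equation*}
(\tau(x_{k})+1)\bigl(\log\tau(x_{k})-1\bigr) \leq (\alpha + o(1))\log\log{x_{k}}.
\end{equation*}
(The remaining case in which $\tau$ stays bounded, or even tends to $0$, makes the right-hand side $O(1)$ and yields $m(\rho) \leq \beta + 1$ immediately.) Dividing the preceding display by $\log\log{x_{k}}$ and letting $k \to \infty$ gives $m(\rho) - 1 - \beta \leq \alpha$, that is $m(\rho) \leq \alpha + \beta + 1$. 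For the final clause, $\alpha + \beta < 1$ implies $m(\rho) < 2$; since $m(\rho)$ is a positive integer this forces $m(\rho) = 1$, proving (SZC).

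The main, though not particularly deep, obstacle is the careful asymptotic tracking of the iterated logarithms: one has to verify that the dominant contribution in $(\tau+1)(\log\tau - 1)$ is genuinely of size $\alpha \log\log{x}$ and that the $o(1)$ corrections cleanly survive the final division by $\log\log{x_{k}}$. Once this routine piece of bookkeeping is dispatched, the entire argument reduces to a direct comparison between the hypothesis and Conjecture \ref{Mycon}.
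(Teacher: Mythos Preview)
Your argument is correct and is exactly the natural one; the paper does not give a separate proof of this proposition, treating it as an immediate consequence of Conjecture \ref{Mycon} together with the elementary asymptotic $(\tau/e)^{\tau+1} \leq (\log{x})^{\alpha + o(1)}$ when $\tau \leq (\alpha+o(1))\frac{\log\log{x}}{\log\log\log{x}}$.

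One small point of bookkeeping: you write that the upper bound on $\tau$ ``forces $\log\tau(x_{k}) = \log\log\log{x_{k}} + O(\log\log\log\log{x_{k}})$'', but of course the hypothesis only gives an \emph{upper} bound on $\tau$, not a two-sided estimate. The clean way to handle this is to note that $g(\tau) := (\tau+1)(\log\tau - 1)$ is strictly increasing on $(0,\infty)$ (indeed $g'(\tau) = \log\tau + 1/\tau$ has its minimum at $\tau=1$, where $g'(1)=1>0$), so the inequality $\tau(x_{k}) \leq (\alpha+o(1))\frac{\log\log{x_{k}}}{\log\log\log{x_{k}}}$ immediately yields $g(\tau(x_{k})) \leq (\alpha+o(1))\log\log{x_{k}}$, covering all regimes of $\tau$ at once. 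With this remark your proof is complete.
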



Now, we can obtain the following result on the bound of $M_{\tau}(x)$ under the Riemann Hypothesis.


\begin{theorem} \label{ASZC}
Assume the Riemann Hypothesis. Then there exists a positive constant $C_0 > 0$ such that
\begin{align*}
M_{\tau}(x) \ll x^{1/2}
\end{align*}
holds for any $\tau = \tau(x) \geq \frac{C_0\log{\log{x}}}{\log{\log{\log{\log{x}}}}}$.
\end{theorem}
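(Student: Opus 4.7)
The plan is to estimate $M_\tau(x)$ directly from its Mellin--Barnes representation
\begin{align*}
M_\tau(x) = \frac{1}{2\pi i}\int_{(c)} \frac{x^{s}\,\Gamma(s)}{\zeta(s)\,\Gamma(s+1+\tau)}\,ds, \qquad c>1,
\end{align*}
which is absolutely convergent for $\tau>0$ thanks to the decay of the Gamma quotient on vertical lines. Under the Riemann Hypothesis I would shift the contour to $\sigma=\tfrac{1}{2}+\alpha$ for a parameter $\alpha=\alpha(x,\tau)>0$ to be chosen later. No residues are picked up in this deformation: the zeros of $\zeta$ sit on the critical line (not crossed), and the pole of $\zeta$ at $s=1$ is cancelled by the zero of $1/\zeta$ there. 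This gives
\begin{align*}
|M_\tau(x)| \;\leq\; \frac{x^{1/2+\alpha}}{2\pi}\,I(\alpha,\tau), \qquad I(\alpha,\tau) := \int_{-\infty}^{\infty}\frac{|\Gamma(\tfrac{1}{2}+\alpha+it)|}{|\zeta(\tfrac{1}{2}+\alpha+it)|\,|\Gamma(\tfrac{3}{2}+\alpha+\tau+it)|}\,dt,
\end{align*}
and the target becomes $x^{\alpha}\,I(\alpha,\tau)=O(1)$.

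I would then split $I(\alpha,\tau)$ at $|t|=T_{0}:=(\tau+1)^{2}$, the threshold above which Stirling's formula yields the clean estimate $|\Gamma(s)/\Gamma(s+1+\tau)|\ll |t|^{-1-\tau}$. On the tail $|t|>T_{0}$, I combine this with the Littlewood-type bound under RH,
\begin{align*}
\bigl|\zeta(\sigma+it)\bigr|^{-1} \;\ll\; \exp\!\left(\frac{C(\log|t|)^{2-2\sigma}}{\log\log|t|}\right),
\end{align*}
valid for $\sigma\geq\tfrac{1}{2}+1/\log\log|t|$. The substitution $u=\log|t|$ reduces the tail to the Laplace-type integral $\int\exp(-u\tau+Cu^{1-2\alpha}/\log u)\,du$, whose size is governed by the unique critical point $u^{\ast}$ satisfying $(u^{\ast})^{2\alpha}\log u^{\ast}=C/\tau$ (at which the exponent vanishes) and by the curvature there. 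On the bulk $|t|\leq T_{0}$ I would use the uniform Beta-function bound
\begin{align*}
\left|\frac{\Gamma(s)}{\Gamma(s+1+\tau)}\right| \;\leq\; \frac{\Gamma(\sigma)}{\Gamma(\sigma+1+\tau)} \;\ll\; \left(\frac{\tau}{e}\right)^{-\tau-1}
\end{align*}
(derived from $B(s,1+\tau)=\int_{0}^{1}u^{s-1}(1-u)^{\tau}\,du$) together with the corresponding bound on $1/|\zeta|$ in this range, giving a contribution of order $T_{0}\,(\tau/e)^{-\tau-1}\exp(C\log T_{0}/\log\log T_{0})$, super-exponentially small in $\tau$.

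The last step is to optimize in $\alpha$. The shift factor $x^{\alpha}$ must be balanced against the Laplace mass of the tail, which behaves like $e^{c/(\alpha\tau)}$ up to a curvature factor. Matching $\alpha\log x$ against this expression forces $\alpha$ to sit near the scale $1/\log\log x$, and the resulting condition for $x^{\alpha}\,I(\alpha,\tau)=O(1)$ becomes $\tau\geq C_{0}\log\log x/\log\log\log\log x$ for some absolute constant $C_{0}>0$, as claimed.

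The main obstacle will be this final optimization. The Littlewood bound on $1/\zeta$ is only sharp in the regime $\alpha\geq 1/\log\log|t|$, so the placement of the contour must be matched against the range of $|t|$ very delicately. Carrying out the Laplace estimate of the tail quantitatively enough to exhibit the exact double-logarithmic threshold on $\tau$ — rather than a polynomially larger one — is the core technical difficulty.
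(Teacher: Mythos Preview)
Your overall strategy --- Mellin--Barnes integral, shift to $\Re s=\tfrac12+\alpha$, split the vertical line, and play Stirling decay of the gamma quotient against the growth of $1/|\zeta|$ --- is the paper's. The gap is in the $1/|\zeta|$ input and the resulting choice of $\alpha$. The Littlewood bound $|\zeta(\sigma+it)|^{-1}\ll\exp\bigl(C(\log|t|)^{2-2\sigma}/\log\log|t|\bigr)$ is only known for $\sigma\geq\tfrac12+1/\log\log|t|$; this is a hard validity constraint, not merely the edge of sharpness. To reach $M_\tau(x)\ll x^{1/2}$ you need $x^\alpha=O(1)$, i.e.\ $\alpha\asymp1/\log x$, and then the bound is unavailable on the entire range of $t$ that matters. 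Your suggested scale $\alpha\sim1/\log\log x$ gives $x^\alpha=\exp(\log x/\log\log x)$, which is far too large to close the estimate. Separately, the claim that the tail mass is $e^{c/(\alpha\tau)}$ is wrong: you yourself note that the exponent of $-\tau u+Cu^{1-2\alpha}/\log u$ vanishes at $u^*$, so the Laplace mass is $O(1)$ up to curvature; and in the regime at hand $u^*\approx e^{C/\tau}$ lies below your cutoff $\log T_0=2\log(\tau+1)$, so the integrand is monotone on the tail and there is nothing exponentially large to balance against $x^\alpha$.

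The paper instead invokes the complementary estimate (Lemma~\ref{zlbuRH}, which is Theorem~13.23 in Montgomery--Vaughan)
\[
|\zeta(s)|^{-1}\leq\exp\!\left(\frac{C\log t}{\log\log t}\,\log\frac{e}{(\sigma-\tfrac12)\log\log t}\right),\qquad \tfrac12<\sigma\leq\tfrac12+\frac{1}{\log\log t},
\]
valid precisely where Littlewood's fails. Taking $\alpha=1/\log x$ so that $x^\alpha=e$, and splitting at $|t|=\log\log x$ rather than $(\tau+1)^2$, the decay $t^{-\tau-1}$ dominates $1/|\zeta|$ for $|t|>\log\log x$ as soon as $\tau\gtrsim(C/\log\log t)\cdot\log\log x$ at the split point; since $\log\log t=\log\log\log\log x$ there, the threshold $\tau\geq C_0\log\log x/\log\log\log\log x$ drops out directly. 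The quadruple logarithm is produced by evaluating $\log\log t$ at $t=\log\log x$, a mechanism your choice of $\zeta$-bound and splitting point cannot reach.
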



In view of this theorem, it is important to study $M_{\tau}(x)$ in the case when $\tau$ depends on $x$.

Note that, however, the study of $M_{\tau}(x)$ for fixed $\tau$ is also not worthless. 
By Theorem \ref{SZCApuRH}, we see that whether $M_{\tau}(x) = o(x^{1/2}\log{x})$ holds or not is crucial to prove (SZC), 
and in this paper, actually a stronger assertion $M_{\tau}(x) \ll x^{1/2}$ holds for any $\tau > 1/2$
under certain two situations; one is the situation 
where Gonek-Hejhal Conjecture and the Riemann Hypothesis hold, and the other is the situation where the weak Mertens Hypothesis holds.



Next, we study the Riesz mean $M_{\tau}(x)$ more closely by the explicit formula in Theorem \ref{exfoM}.
If (SZC) is true, then we have
\begin{align*}
M_{\tau}(x) = \lim_{T_{\nu} \rightarrow \infty}\sum_{|\gamma| < T_{\nu}}\frac{x^{\rho}}{\zeta'(\rho)}\frac{\Gamma(\rho)}{\Gamma(1+\tau+\rho)}
+\sum_{l=0}^{\infty}\underset{s=-l}{\mathrm{Res}}\left( \frac{x^{s}}{\zeta(s)}\frac{\Gamma(s)}{\Gamma(1 + s + \tau)} \right)
\end{align*}
from Theorem \ref{exfoM}.
If (SZC) is solved, then a natural next problem is to find some lower bound of $\zeta'(\rho)$.
This problem is difficult because it is deeply connected with the distribution of zeros of the Riemann zeta-function.
These are the reasons why we now mention the following conjecture:


\begin{conjecture}[The Gonek-Hejhal Hypothesis]	\label{GHH} 
\begin{align*}
J_{\lambda}(T)& := \sum_{0 < \gamma \leq T}\left|\zeta'(\rho)\right|^{2\lambda} 
                  \asymp T(\log{T})^{(\lambda+1)^2}          
\end{align*}
for any $\lambda > -\frac{3}{2}$ under (SZC).
\end{conjecture}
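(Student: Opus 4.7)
The plan is to combine the Conrey--Farmer--Keating--Rubinstein--Snaith (CFKRS) moment recipe for $\zeta(s)$ with the random matrix prediction coming from the conjectural CUE statistics of the nontrivial zeros. On the random-matrix side, Hughes--Keating--O'Connell computed the moments of the derivative of the characteristic polynomial $Z(U,\theta)$ of a Haar-random $U\in U(N)$ at its eigenangles and obtained an asymptotic of order $N^{(\lambda+1)^2}$ for the sum $\sum_{n}|Z'(U,\theta_n)|^{2\lambda}$, valid for every $\lambda>-3/2$. The exponent $(\lambda+1)^2$ then transfers via the Katz--Sarnak correspondence to the prediction $J_\lambda(T)\asymp T(\log T)^{(\lambda+1)^2}$; the threshold $\lambda=-3/2$ is natural, since the CUE pair correlation density behaves as $\sim s^2$ near $s=0$ and the integral $\int_0^1 s^{2\lambda+2}\,ds$ converges precisely when $\lambda>-3/2$, matching the boundary in the conjecture.

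To attempt a rigorous argument on the zeta side, I would first apply a Landau--Gonek style discrete mean value formula to convert $J_\lambda(T)$ into a contour integral against $\zeta'/\zeta$, and then represent
\[
|\zeta'(\rho)|^{2\lambda}=\lim_{\alpha,\beta\to 0}\frac{\partial^{\lambda}}{\partial\alpha^{\lambda}}\frac{\partial^{\lambda}}{\partial\beta^{\lambda}}\Big(\alpha^{\lambda}\beta^{\lambda}\,\zeta(\rho+\alpha)\overline{\zeta(\rho+\beta)}\Big)
\]
with analytic continuation in the parameter $\lambda$. Applying the CFKRS recipe to the resulting shifted discrete moment and extracting the leading residue as the shifts coalesce should yield the desired asymptotic, with a constant matching the random-matrix prediction of Hughes--Keating--O'Connell.

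The main obstacle is that the CFKRS recipe is itself conjectural, and has been rigorously justified only for very restricted moments of $\zeta$. For $\lambda=1$ one has $J_1(T)\asymp T(\log T)^4$ conditionally on RH (through work of Gonek, Heath-Brown, Ng, Hall and others), but for general $\lambda$ even the one-sided bound $J_\lambda(T)\ll T(\log T)^{(\lambda+1)^2}$ is open; the negative moment case, which is precisely the case most directly relevant to the analyses built on Theorem \ref{exfoM}, is essentially wide open. For non-integer $\lambda$ the very definition of $|\zeta'(\rho)|^{2\lambda}$ through analytic continuation and the convergence issues near $\lambda=-3/2$ introduce further technical obstructions. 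Hence my plan reduces the Gonek--Hejhal Hypothesis to the CFKRS framework plus a random-matrix computation, but does not close it outright; this is why the statement is adopted here only as a conjecture, to be used as an auxiliary hypothesis in the subsequent sections of the paper.
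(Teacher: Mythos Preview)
The statement you are asked to ``prove'' is labeled as a \emph{conjecture} in the paper, not a theorem; the paper makes no attempt to prove it and instead uses it as an auxiliary hypothesis (specifically the case $\lambda=-1$ in Corollary~\ref{dzb}). Your write-up correctly identifies this and supplies heuristic motivation rather than a proof, which is the appropriate response here. The paper's own discussion is similar in spirit but more cursory: it cites the trivial case $J_0(T)=N(T)\asymp T\log T$, Gonek's unconditional lower bound $J_{-1}(T)\gg T$, and the Hughes--Keating--O'Connell random matrix prediction, without going into the CFKRS recipe or the pair-correlation explanation of the threshold $\lambda=-3/2$ that you sketch. Your additional remarks about the $s^2$ behavior of the CUE pair correlation and the resulting convergence boundary are a nice complement to the paper's references, though they go beyond what the paper itself says.
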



From the viewpoints different from each other, Gonek \cite{G} and Hejhal \cite{H} independently suggested this conjecture.
In fact, $J_{0}(T) = N(T) \asymp T\log{T}$ (see Lemma \ref{gzdbz} in this paper), and Gonek \cite{G} showed 
\begin{align}	\label{Gonek-estimate}
J_{-1}(T) \gg T.
\end{align}
In addition, he suggested the asymptotic formula
\begin{align}	\label{sharpGHH}
J_{-1}(T) \sim \frac{3}{\pi^3}T.
\end{align}
Moreover, from the view point of the theory of random matrices, C. P. Hughes, J. P. Keating and N. O'Connell \cite {HKC} suggested that,
for $\lambda > -3/2$, we have
\begin{align}
J_{\lambda}(T) \sim \frac{G^2(\lambda+2)}{G(2\lambda + 3)}a_\lambda\frac{T}{2\pi}\l( \log\frac{T}{2\pi} \r)^{(\lambda+1)^2},
\end{align}
where
\begin{align*}
a_\lambda = \prod_{p}\l( 1 - \frac{1}{p} \r)^{\lambda^2}\l( \sum_{m=0}^{\infty}\l( \frac{\Gamma(m+\lambda)}{m!\Gamma(\lambda)} \r)^2p^{-m} \r),
\end{align*}
and $G$ is Barnes' function defined by
\begin{align*}
G(z+1) = (2\pi)^{z/2}\exp\l( -\frac{1}{2}(z^2+\gamma z^2 + z) \r)\prod_{n=1}^{\infty}\l( \l( 1 + \frac{z}{n} \r)^n e^{-z+z^2/2n} \r).
\end{align*}
Here $\gamma$ denotes Euler's constant.
Hence we may say that Conjecture \ref{GHH} is supported from various points of view.

We can obtain the following corollary by assuming the Riemann Hypothesis and Conjecture \ref{GHH} at $\lambda = -1$.


\begin{corollary} \label{dzb}

We assume the Riemann Hypothesis. If $J_{-1}(T) \ll T$ holds, then we have 
\begin{align*}
M_\tau(x) \ll \l\{
\begin{array}{ll}
	x^{1/2}					& \text{\normal{if} \; $1 \ll \tau(x)$}, \vspace{1mm} \\
	x^{1/2}/\tau^{3/2}			& \text{\normal{if} \; $(\log{x})^{-1} \ll \tau(x) = o(1)$}, \vspace{1mm} \\
	x^{1/2}(\log{x})^{3/2} 			& \text{\normal{if} \; $0 \leq \tau(x) =  o\l((\log{x})^{-1}\r)$}
\end{array}
\r.
\end{align*}
for any positive number $x > 2$.
\end{corollary}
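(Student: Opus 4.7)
The plan is to substitute the explicit formula of Theorem \ref{exfoM} into the Riesz mean and bound each piece according to the size of $\tau$. Working under the Riemann Hypothesis and noting that the hypothesis $J_{-1}(T) \ll T$ implicitly assumes simple zeros (otherwise $|\zeta'(\rho)|^{-2}$ would be undefined in the sum defining $J_{-1}$), the formula of Theorem \ref{exfoM} collapses to
\begin{align*}
M_\tau(x) = x^{1/2} \lim_{T_\nu \to \infty} \sum_{|\gamma| < T_\nu} \frac{x^{i\gamma}\,\Gamma(\rho)}{\zeta'(\rho)\,\Gamma(1+\tau+\rho)} + O(1),
\end{align*}
where the $O(1)$ comes from the uniformly bounded trivial-zero series already controlled in Theorem \ref{exfoM}.

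Next I would use Stirling's formula on the ratio of Gamma functions, which for $\rho = \tfrac{1}{2}+i\gamma$ yields
\begin{align*}
\left| \frac{\Gamma(\rho)}{\Gamma(1+\tau+\rho)} \right| \ll |\gamma|^{-1-\tau}
\end{align*}
uniformly for $|\gamma| \ge 1$ throughout the ranges of $\tau$ under consideration, so the task reduces to bounding $\sum_\gamma |\zeta'(\rho)|^{-1}|\gamma|^{-1-\tau}$. I would split into dyadic ranges $T_k = T_0 2^k$ and apply Cauchy--Schwarz with the assumed $J_{-1}(2T) \ll T$ together with $N(T) \ll T\log T$ from Lemma \ref{gzdbz}, obtaining for each block
\begin{align*}
\sum_{T \le |\gamma| \le 2T} \frac{1}{|\zeta'(\rho)|\,|\gamma|^{1+\tau}}
\le \bigg(\sum \frac{1}{|\zeta'(\rho)|^2}\bigg)^{\!1/2}\bigg(\sum \frac{1}{|\gamma|^{2+2\tau}}\bigg)^{\!1/2}
\ll T^{-\tau}(\log T)^{1/2}.
\end{align*}

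Summing these dyadic contributions over $k \ge 0$ produces the three regimes. For $\tau \gg 1$, the factor $2^{-k\tau}$ forces geometric convergence and the full sum is $O(1)$, giving $M_\tau(x) \ll x^{1/2}$. For $(\log x)^{-1} \ll \tau = o(1)$, comparison with $\int_0^\infty 2^{-u\tau} u^{1/2}\,du$ shows that $\sum_k 2^{-k\tau} k^{1/2} \asymp \tau^{-3/2}$, yielding $M_\tau(x) \ll x^{1/2}/\tau^{3/2}$. For $0 \le \tau = o((\log x)^{-1})$ the decay factor $2^{-k\tau}$ is essentially $1$ across the entire tail, so I would truncate the sum over zeros at a height $T^{\star} \asymp x$; the truncated portion is then bounded by $\sum_{k:\,T_k \le x} k^{1/2} \ll (\log x)^{3/2}$, which gives the claimed $M_\tau(x) \ll x^{1/2}(\log x)^{3/2}$.

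The main obstacle is the third case. When $\tau$ is smaller than $(\log x)^{-1}$, the sum over zeros is only conditionally convergent, so the dyadic absolute estimate cannot be run beyond height $T^{\star} = x$ without losing a divergent factor. Handling the tail $|\gamma| > x$ therefore requires returning to the Perron-type integral representation underlying Theorem \ref{exfoM}, shifting the contour, and using the standard conditional bounds on $1/\zeta(s)$ near the critical line under RH to absorb the remaining horizontal and vertical contributions into the final $O(x^{1/2}(\log x)^{3/2})$; this contour analysis is where the technical work is concentrated.
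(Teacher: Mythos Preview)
Your treatment of the first two regimes is essentially the paper's argument in dyadic dress: the paper applies Cauchy--Schwarz once to the whole sum $\sum_{|\gamma|<T}|\zeta'(\rho)|^{-1}|\gamma|^{-1-\tau}$ and then evaluates $\sum|\gamma|^{-1-\tau}|\zeta'(\rho)|^{-2}$ and $\sum|\gamma|^{-1-\tau}$ by partial summation against $J_{-1}(u)$ and $N(u)$, arriving at exactly the bounds $1$, $\tau^{-1}$, $\log T$ and $1$, $\tau^{-2}$, $(\log T)^2$ that your dyadic blocks reproduce after summation. So for $\tau\gg 1$ and for $(\log x)^{-1}\ll\tau=o(1)$ the two write-ups coincide up to bookkeeping.

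The third regime is where you and the paper diverge. You correctly identify the obstruction---for $\tau=o((\log x)^{-1})$ the zero sum is not absolutely convergent and the truncation error in the explicit formula \eqref{subexfoM} carries a factor $1/\tau$---but your proposed remedy (shift the contour and invoke RH bounds on $1/\zeta$ near the critical line) is vague and unnecessarily delicate. The paper's device is much cleaner: first take $x$ to be a \emph{half-integer}. Then in the Perron remainder \eqref{Rinequality} of Lemma~\ref{GRF} one may always choose the branch $x/(T|x-n|)$ of the minimum (since $|x-n|\ge 1/2$), which kills the $1/\tau$ term entirely and leaves an error $O(x^{3}/T_*^{\,1+\tau})$. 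Choosing $T\asymp x^{3}$ makes this $O(1)$, and the finite zero sum up to $T_*$ is bounded by $(\log T_*)^{3/2}\ll(\log x)^{3/2}$ via the same Cauchy--Schwarz. Finally one passes from half-integers to arbitrary $x$ by the elementary estimate
\[
\Bigl(1-\tfrac{n}{x}\Bigr)^{\tau}-\Bigl(1-\tfrac{n}{x_0}\Bigr)^{\tau}\ll \frac{n\,2^{\tau}}{x^{2}},
\]
which contributes $O(2^{\tau})=O(1)$. This half-integer trick is the missing idea in your sketch; with it, no further contour analysis is needed in the small-$\tau$ case.
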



The case $\tau \equiv 0$ of Corollary \ref{dzb} is also the same as Theorem 1 (i) in N. Ng's paper \cite{ng1}.
We obtain the result analogous to the inequality (\ref{MH}) for $M_{\tau}(x)$ 
when $\tau \gg 1$ by Corollary \ref{dzb}. 
Furthermore, if the above assumption $J_{-1}(T) \ll T$ is replaced by the assumption $J_{-1/2}(T) \ll T(\log{T})^{1/4}$, 
then we have
\begin{align*}
M_{\tau}(x) \ll \l\{
\begin{array}{ll}
	x^{1/2}					& \text{if \; $1 \ll \tau(x)$}, \vspace{1mm} \\
	x^{1/2}/\tau^{5/4}			& \text{if \; $(\log{x})^{-1} \ll \tau(x) = o(1)$}, \vspace{1mm} \\
	x^{1/2}(\log{x})^{5/4} 			& \text{if \; $0 \leq \tau(x) =  o\l((\log{x})^{-1}\r)$}.
\end{array}
\r.
\end{align*}
The case $\tau \equiv 0$ of the above result is mentioned by Ng \cite{ng1}.

Also, we consider the following conjecture:


\begin{conjecture}[The weak Mertens Hypothesis]	\label{WMH}
\begin{align}
\int_{1}^{x}\left(\frac{M(u)}{u}\right)^2du \ll \log{x}.
\end{align}
\end{conjecture}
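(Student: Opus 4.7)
The plan is to reduce the integral to a double sum over non-trivial zeros of $\zeta(s)$ via the explicit formula, and then to control that sum using Gonek--Hejhal-style mean values. First, I would observe that it is natural to grant both the Riemann Hypothesis (RH) and the simple zero conjecture (SZC) from the outset, since the weak Mertens Hypothesis implies both: failure of RH would force $(M(u)/u)^2$ to grow like $u^{2\Re\rho_0 - 2}$ on average with $\Re\rho_0 > 1/2$, while a zero of multiplicity $m \geq 2$ would make the integral $\Omega\l((\log x)^{2m-1}\r)$ via the $\Omega$-bound $M(x) = \Omega_{\pm}(x^{1/2}(\log x)^{m-1})$ quoted in the introduction.

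Granted RH and SZC, the explicit formula (\ref{exfoM_0}) specializes to
\begin{align*}
M(u) = \sqrt{u}\sum_{|\gamma| < T}\frac{u^{i\gamma}}{\rho\zeta'(\rho)} + R(u, T),
\end{align*}
with $R(u, T) \ll u(\log uT)^2/T$ in a suitable range. Dividing by $u$, squaring, and integrating produces, schematically,
\begin{align*}
\int_{1}^{X}\l(\frac{M(u)}{u}\r)^2 du
= \sum_{|\gamma|, |\gamma'| < T} \frac{1}{\rho\zeta'(\rho)\,\overline{\rho'\zeta'(\rho')}} \int_{1}^{X} u^{i(\gamma - \gamma') - 1}du
\;+\; (\text{error from } R).
\end{align*}
The diagonal $\gamma = \gamma'$ yields $(\log X)\sum_{|\gamma|<T}|\rho\zeta'(\rho)|^{-2}$, while the off-diagonal $\gamma \neq \gamma'$ is majorized by $\sum_{\gamma \neq \gamma'}|\rho\zeta'(\rho)\,\rho'\zeta'(\rho')|^{-1}|\gamma - \gamma'|^{-1}$ after trivially bounding the oscillatory integral.

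Two ingredients would then suffice. First, the convergence of $\sum_{\gamma}|\rho\zeta'(\rho)|^{-2}$, which follows from $J_{-1}(T) \ll T$ (Conjecture \ref{GHH} at $\lambda = -1$, consistent with Gonek's asymptotic (\ref{sharpGHH})) via partial summation with the factor $|\rho|^{-2} \asymp \gamma^{-2}$. Second, a bound of size $O(\log X)$ for the off-diagonal double sum, after choosing $T$ as a small power of $X$ so as to absorb the truncation error. Combined, these would yield the weak Mertens Hypothesis.

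The main obstacle is the off-diagonal estimate. When $|\gamma - \gamma'|$ is as small as $1/\log T$, the summand reaches size $\log T$, and controlling its cumulative contribution requires both refined pair-correlation information on the ordinates $\gamma$ and simultaneous lower bounds on $|\zeta'(\rho)\zeta'(\rho')|$ for close pairs. Neither is available unconditionally, and each is of comparable depth to Conjecture \ref{GHH} itself. This is precisely why the weak Mertens Hypothesis remains out of reach even assuming RH and SZC: after stripping off the diagonal, it is essentially tantamount to the yet-unproved statement $\sum_{\gamma}|\rho\zeta'(\rho)|^{-2} < \infty$ together with quantitative control on near-coincidences of the zero ordinates.
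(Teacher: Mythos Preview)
The statement you were asked to address is Conjecture~\ref{WMH}, not a theorem: the paper does not prove the weak Mertens Hypothesis, it \emph{assumes} it as a hypothesis for Corollaries~\ref{AWMH}, \ref{MM}, \ref{zetaequation} and \ref{formula}. There is therefore no proof in the paper to compare your proposal against. Your write-up is not a proof either; it is (correctly) a sketch of a conditional approach together with an explanation of why an unconditional proof is currently out of reach.

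One substantive correction to your analysis: you conclude that handling the off-diagonal sum requires, in addition to $\sum_{\gamma}|\rho\zeta'(\rho)|^{-2} < \infty$, separate ``quantitative control on near-coincidences of the zero ordinates'' of pair-correlation type. This is too pessimistic. The paper records (just above Conjecture~\ref{WMH}) that Ng \cite{ng1} proved the asymptotic~(\ref{SWMH}), and in particular the weak Mertens Hypothesis itself, from RH together with the single hypothesis $J_{-1}(T) \ll T$. No independent pair-correlation input is needed: the same negative-moment bound that makes the diagonal converge is enough to control the off-diagonal via Cauchy--Schwarz and partial summation. So the correct summary is that the weak Mertens Hypothesis is known to follow from RH and Conjecture~\ref{GHH} at $\lambda = -1$, and the genuine obstruction is proving $J_{-1}(T) \ll T$, not an additional fine-scale spacing statement.
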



E. C. Titchmarsh carefully discussed various fascinating facts under this conjecture in \cite[Section 14.\ 28]{T}.
For example, Conjecture \ref{WMH} implies that the Riemann Hypothesis and (SZC) are true, 
and that $\d{\sum_{\rho}\frac{1}{|\rho \zeta'(\rho)|^2}}$ is convergent. 

Also, Ng \cite{ng1} showed that the Riemann Hypothesis and $J_{-1}(T) \ll T$ imply the weak Mertens Hypothesis.
Moreover, he proved that the Riemann Hypothesis and $J_{-1}(T) \ll T$ imply
\begin{align}	\label{SWMH}
\int_{1}^{x}\left(\frac{M(u)}{u}\right)^2du \sim \log{x}\sum_{\gamma}\frac{1}{|\rho\zeta'(\rho)|^2}.
\end{align}

Now, we obtain the following results under Conjecture \ref{WMH}.


\begin{corollary}	\label{AWMH}

We assume the weak Mertens Hypothesis. Let $\tau > 1/2$ be a real number. Then, we have
\begin{align*}
M_{\tau}(x) = x^{1/2}\sum_{\gamma}\frac{x^{i\gamma}}{\zeta'(\rho)}\frac{\Gamma(\rho)}{\Gamma(\rho+\tau+1)}
+\sum_{l=0}^{\infty}\underset{s=-l}{\rm Res}\l(\frac{x^s}{\zeta(s)}\frac{\Gamma(s)}{\Gamma(1+\tau+s)}\r),
\end{align*}
and the series in the first term is uniformly and absolutely convergent with respect to $x \in (0, \infty)$.
In particular, we have 
\begin{align*}
	M_{\tau}(x) \ll_{\varepsilon} x^{1/2}
\end{align*}
for any $\tau \geq 1/2+\varepsilon$.
\end{corollary}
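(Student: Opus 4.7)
The plan is to deduce the corollary directly from Theorem \ref{exfoM} by exploiting the strong consequences of the weak Mertens Hypothesis, namely (as recalled by Titchmarsh) that it forces both the Riemann Hypothesis and the Simple Zero Conjecture, and moreover forces the convergence of $\sum_{\rho} 1/|\rho\zeta'(\rho)|^{2}$.

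First I would substitute these consequences into the explicit formula of Theorem \ref{exfoM}. Since every zero has $m(\rho)=1$, the differentiation collapses and the $\rho$-term reduces to $x^{\rho}\Gamma(\rho)/(\zeta'(\rho)\Gamma(1+\tau+\rho))$, while RH gives $x^{\rho}=x^{1/2}x^{i\gamma}$. So the identity claimed in the corollary holds as soon as the sum over $\gamma$ can be shown to converge absolutely (so that the limit over $T_{\nu}$ can be replaced by a genuine series). The second, residue-over-non-positive-integers term is already $O(1)$ by the last assertion in Theorem \ref{exfoM}.

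For absolute convergence, I would use the Cauchy--Schwarz inequality to write
\begin{align*}
\sum_{\gamma}\left|\frac{\Gamma(\rho)}{\zeta'(\rho)\Gamma(1+\tau+\rho)}\right|
\leq \Bigl(\sum_{\gamma}\frac{1}{|\rho\zeta'(\rho)|^{2}}\Bigr)^{1/2}
\Bigl(\sum_{\gamma}\Bigl|\frac{\rho\,\Gamma(\rho)}{\Gamma(1+\tau+\rho)}\Bigr|^{2}\Bigr)^{1/2}.
\end{align*}
The first factor is finite by the weak Mertens Hypothesis. For the second factor, Stirling's formula applied on the critical line gives $|\Gamma(\rho)/\Gamma(1+\tau+\rho)|\asymp |\gamma|^{-1-\tau}$, so each summand is $\asymp |\gamma|^{-2\tau}$; combining with $N(T)\asymp T\log T$ (Lemma \ref{gzdbz}) and partial summation shows the sum converges precisely when $\tau>1/2$. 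Uniform convergence in $x\in(0,\infty)$ is then automatic because $|x^{i\gamma}|=1$, so the partial sums are majorised by an $x$-independent convergent series.

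Finally, for the bound $M_{\tau}(x)\ll_{\varepsilon} x^{1/2}$ when $\tau\geq 1/2+\varepsilon$, I would observe that the Cauchy--Schwarz bound above is, after pulling out the factor $x^{1/2}$, uniform in $x$ and depends on $\tau$ only through $\sum_{\gamma}|\gamma|^{-2\tau}$, which is uniformly bounded for $\tau\geq 1/2+\varepsilon$; adding the $O(1)$ contribution of the second term from Theorem \ref{exfoM} completes the estimate. The only mildly delicate step is verifying the Stirling asymptotic $|\Gamma(\rho)/\Gamma(1+\tau+\rho)|\asymp |\gamma|^{-1-\tau}$ with a constant that does not blow up as $\tau\downarrow 1/2$; this can be handled by using the standard ratio form of Stirling (with uniform error on vertical strips) rather than the asymptotic expansion of each Gamma factor separately. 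Everything else is essentially bookkeeping once the convergence is in hand.
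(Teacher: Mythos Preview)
Your proposal is correct and follows essentially the same route as the paper: invoke the Titchmarsh consequences of the weak Mertens Hypothesis (RH, SZC, and convergence of $\sum_{\rho}|\rho\zeta'(\rho)|^{-2}$), plug these into Theorem~\ref{exfoM} to simplify each residue, and then bound the absolute series by Cauchy--Schwarz with exactly the split $\bigl(\sum|\rho\zeta'(\rho)|^{-2}\bigr)^{1/2}\bigl(\sum|\rho|^{-2\tau}\bigr)^{1/2}$. The paper's proof is the same argument, stated slightly more tersely.
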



This corollary implies the inequality, which is analogous to the inequality (\ref{MH}) for $M_{\tau}(x)$ 
with $\tau > 1/2$ under the weak Mertens Hypothesis.

The Riesz mean has the recurrence formula
\begin{align*}
\int_{1}^{x}u^{\tau-1}M_{\tau-1}(u)du = x^{\tau}M_{\tau}(x)
\end{align*}
for $\tau \geq 1$.
Thus we can study the integral of $M(x)$ by $M_{\tau}(x)$.
The following results are under this principle. 
Let $A(s)$ be defined by
\begin{align} \label{MMconstant}
A(s) = 
&\frac{10s - 12}{s-1} + s \sum_{l = 1}^{\infty}\frac{2(-1)^{l}(2l-2)!(2\pi)^{2l}}{\{(2l)!\}^2
\left(2l + s - 1 \right) \zeta(2l+1)}\\ \nonumber
&-s\sum_{\gamma}\frac{1}{\zeta'(\rho)\rho(\rho+1)(\rho-s+1)}.
\end{align}
Now, we obtain the following result, which gives an improvement on a 
result of Ng \cite{ng1} (see the remark after Corollary \ref{zetaequation}).


\begin{corollary}	\label{MM}
Assume the weak Mertens Hypothesis and let $\kappa$ be a real number. Then, we have
\begin{align*}
\int_{1}^{x}\frac{M(u)}{u^{\kappa}}du
= x^{3/2-\kappa}\sum_{\gamma}\frac{x^{i\gamma}}{\zeta'(\rho)\rho(\rho+1-\kappa)} + E_{\kappa}(x),
\end{align*}
where
\begin{align*}
E_{\kappa}(x)
=
\l\{\begin{array}{ll}
	A(\kappa) + O\l(x^{1 - \kappa}\r) 	& \text{\normal{if} \; $\d{1 < \kappa}$}, \vspace{1mm} \\
	O(\log{x}) 					& \text{\normal{if} \; $\kappa = 1$}, \vspace{1mm} \\
	O(x^{1-\kappa}) 				& \text{\normal{if} \; $\kappa < 1$}.
\end{array}
\r.
\end{align*}
Moreover, under the weak Mertens Hypothesis, for any $\kappa > \frac{1}{2}$ we have
\begin{align}	\label{zetaequationreal}
\frac{1}{\zeta(\kappa)} = \kappa A(\kappa + 1),
\end{align}
and for any $\kappa \leq \frac{3}{2}$
\begin{align}	\label{impNg}
\int_{1}^{x}\frac{M(u)}{u^{\kappa}}du \ll x^{3/2 - \kappa}.
\end{align}
\end{corollary}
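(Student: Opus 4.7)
The plan is to combine the recurrence formula
\[
\int_1^x u^{\tau-1} M_{\tau-1}(u)\,du = x^\tau M_\tau(x)
\]
at $\tau = 1$ with the explicit formula for $M_1(x)$ supplied by Corollary \ref{AWMH}. Setting $F(u) = \int_1^u M(t)\,dt = u M_1(u)$, noting $M_1(1) = 0$, and integrating by parts,
\[
\int_1^x \frac{M(u)}{u^\kappa}\,du = x^{1-\kappa} M_1(x) + \kappa \int_1^x \frac{M_1(u)}{u^\kappa}\,du.
\]
Since $1 > 1/2$, Corollary \ref{AWMH} applies and, using $\Gamma(\rho)/\Gamma(\rho+2) = 1/[\rho(\rho+1)]$, gives
\[
M_1(u) = u^{1/2}\sum_\gamma \frac{u^{i\gamma}}{\zeta'(\rho)\rho(\rho+1)} + R(u),
\]
with $R(u) \ll 1$ the sum of residues from Theorem \ref{exfoM}, the $\gamma$-series being absolutely and uniformly convergent in $u$.

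After substituting and interchanging sum and integral, the inner integral evaluates to
\[
\int_1^x u^{1/2-\kappa+i\gamma}\,du = \frac{x^{\rho+1-\kappa}-1}{\rho+1-\kappa},
\]
which is well-defined since $\rho+1-\kappa = 3/2-\kappa+i\gamma \neq 0$. The partial-fraction identity $\frac{1}{\rho(\rho+1)}+\frac{\kappa}{\rho(\rho+1)(\rho+1-\kappa)} = \frac{1}{\rho(\rho+1-\kappa)}$ collapses the two $x^{3/2-\kappa}$ contributions into the claimed main term, leaving
\[
E_\kappa(x) = x^{1-\kappa}R(x) + \kappa\int_1^x \frac{R(u)}{u^\kappa}\,du - \kappa\sum_\gamma \frac{1}{\zeta'(\rho)\rho(\rho+1)(\rho+1-\kappa)}.
\]
For $\kappa > 1$ one writes $\int_1^x = \int_1^\infty - \int_x^\infty$; the tail is $O(x^{1-\kappa})$, and expanding $R(u) = -2 + 12u^{-1} + \sum_{l \geq 1} u^{-2l}/[\zeta'(-2l)\cdot 2l(2l-1)]$ and substituting the functional-equation value $\zeta'(-2l) = (-1)^l(2l)!\,\zeta(2l+1)/[2(2\pi)^{2l}]$ identifies the constant with $A(\kappa)$ as defined in (\ref{MMconstant}) (using in particular $-2\kappa/(\kappa-1)+12 = (10\kappa-12)/(\kappa-1)$). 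For $\kappa = 1$ the $\log x$ arises from $\int_1^x u^{-1}\,du$, and for $\kappa < 1$ every piece of $E_\kappa(x)$ is $O(x^{1-\kappa})$.

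For (\ref{zetaequationreal}), I invoke the Mellin identity $\int_1^\infty M(u)\,u^{-s-1}\,du = 1/[s\zeta(s)]$, valid for $\Re s > 1/2$ under the Riemann Hypothesis (implied by WMH). Applying the main formula with $\kappa$ replaced by $\kappa+1$ and letting $x \to \infty$, for $\kappa > 1/2$ the factor $x^{1/2-\kappa}$ annihilates the bounded oscillatory sum, yielding $1/[\kappa\zeta(\kappa)] = A(\kappa+1)$. The bound (\ref{impNg}) then follows by case analysis, since the $\gamma$-sum is bounded in $x$, $A(\kappa)$ is a fixed constant for each $\kappa > 1$, and both $\log x$ and $x^{1-\kappa}$ are $\ll x^{3/2-\kappa}$ for $\kappa \leq 3/2$ and $x \geq 1$. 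The main obstacle is justifying the sum-integral interchange and the $x$-uniform absolute convergence of $\sum_\gamma x^{i\gamma}/[\zeta'(\rho)\rho(\rho+1-\kappa)]$; via Cauchy--Schwarz this reduces to $\sum_\rho |\rho\zeta'(\rho)|^{-2} < \infty$ (a standard consequence of WMH, cf.\ Titchmarsh \cite[\S14.28]{T}) combined with the trivial bound $\sum_\gamma |\rho+1-\kappa|^{-2} \asymp \sum_\gamma \gamma^{-2} < \infty$.
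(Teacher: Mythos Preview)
Your proof is correct and follows essentially the same route as the paper's: integration by parts to reduce to $M_1$, application of Corollary~\ref{AWMH} at $\tau=1$, and term-by-term integration of the absolutely convergent $\gamma$-series. You supply several details the paper leaves implicit---the partial-fraction identity that collapses the two $x^{3/2-\kappa}$ contributions, the explicit form of $R(u)$ matching~(\ref{Resint}) at $k=1$, the case split for $\kappa\le 1$, and the derivations of (\ref{zetaequationreal}) and (\ref{impNg})---but the underlying argument is the same.
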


We were able to obtain the equation (\ref{zetaequationreal}), which express $1/\zeta(s)$
 by the sum over the non-trivial zeros of $\zeta(s)$ under the weak Mertens Hypothesis. 
Also, this equation holds even if $\kappa$ is any complex number by analytic continuation, or we obtain the following corollary as
an explicit formula of $1/\zeta(s)$ on $\mathbb{C}$.


\begin{corollary}	\label{zetaequation}
Assume the weak Mertens Hypothesis. We have
\begin{align*}
\frac{1}{\zeta(s)} = 
&10s - 2 + s(s+1) \sum_{l = 1}^{\infty}\frac{2(-1)^{l}(2l-2)!(2\pi)^{2l}}{\{(2l)!\}^2
\left(2l + s \right) \zeta(2l+1)}\\ \nonumber
&-s(s+1)\sum_{\gamma}\frac{1}{\zeta'(\rho)\rho(\rho+1)(\rho-s)}
\end{align*}
for $s \in \mathbb{C}$, and these two series are absolutely and uniformly convergent in any compact subset $K \subset \mathbb{C}$,
which does not contain a zero of $\zeta(s)$.
\end{corollary}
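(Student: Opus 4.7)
The plan is to deduce this corollary from Corollary \ref{MM} together with a standard analytic continuation argument. The identity \eqref{zetaequationreal} gives $1/\zeta(\kappa) = \kappa A(\kappa+1)$ for all real $\kappa > 1/2$. My first step would be a direct algebraic check: substituting $s = \kappa$ into the defining expression \eqref{MMconstant} for $A(\kappa+1)$ and multiplying by $\kappa$, the term $\frac{10(\kappa+1)-12}{\kappa} = \frac{10\kappa - 2}{\kappa}$ produces $10s - 2$, and the factors $(\kappa+1)$ in front of the two series combine with the outer $\kappa$ to give $s(s+1)$. After this bookkeeping, $\kappa A(\kappa+1)$ matches the right-hand side of Corollary \ref{zetaequation} verbatim on $(1/2, \infty)$.

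The next step is to show that the right-hand side defines a meromorphic function on $\mathbb{C}$ whose poles coincide with those of $1/\zeta(s)$. For the series indexed by $l$, the coefficients $(2\pi)^{2l}/\{(2l)!\}^2$ decay super-exponentially and $\zeta(2l+1) \to 1$, so the series converges absolutely and uniformly on any compact subset of $\mathbb{C}$ avoiding the set $\{-2,-4,-6,\ldots\}$ (the locations where $1/\zeta(s)$ has its trivial poles). For the sum over nontrivial zeros, I would use the central consequence of the weak Mertens Hypothesis that
\begin{align*}
\sum_{\rho}\frac{1}{|\rho\, \zeta'(\rho)|^2} < \infty,
\end{align*}
(cf.\ Titchmarsh \cite[Section 14.28]{T}). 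For $s$ lying in a compact set $K$ disjoint from $\{\rho\}$, we have $|\rho - s|^{-1} \ll_{K} |\rho|^{-1}$ for $|\gamma|$ large and $|\rho - s|^{-1} \ll_K 1$ uniformly on the finitely many small zeros. Hence a single Cauchy--Schwarz step
\begin{align*}
\sum_{\gamma}\frac{1}{|\zeta'(\rho)\rho(\rho+1)(\rho-s)|}
\ll_K \left(\sum_{\rho}\frac{1}{|\rho\zeta'(\rho)|^2}\right)^{1/2}\left(\sum_{\rho}\frac{1}{|\rho|^4}\right)^{1/2},
\end{align*}
combined with the elementary bound $\sum_{\rho}|\rho|^{-4} < \infty$ (which follows from $N(T) \asymp T\log T$, e.g.\ Lemma \ref{gzdbz}), establishes absolute and uniform convergence on any such compact $K$.

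Once both series are shown to be meromorphic on $\mathbb{C}$ with the expected polar sets, the right-hand side becomes a meromorphic function whose poles are exactly $\{\rho\} \cup \{-2,-4,\ldots\}$, matching the poles of $1/\zeta(s)$. Since $1/\zeta(s)$ and the right-hand side agree on the ray $(1/2, \infty)$ by the first step, the identity theorem for meromorphic functions forces them to coincide on all of $\mathbb{C}$, which is the claimed explicit formula together with the claimed convergence properties.

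The main obstacle is the uniformity of the sum over zeros: without the weak Mertens Hypothesis one has no non-trivial upper bound on $\sum_\rho |\rho\zeta'(\rho)|^{-1}$, and the naive bound $|\zeta'(\rho)|^{-1} \ll |\gamma|^{\varepsilon}$ combined with $N(T) \asymp T\log T$ is not enough to push through Cauchy--Schwarz with the extra factor $1/|\rho(\rho+1)(\rho-s)|$. Invoking the WMH exactly compensates for this, and its role in producing an absolutely convergent series (rather than merely a conditionally convergent symmetric sum as in Theorem \ref{exfoM}) is what makes the analytic continuation argument go through cleanly.
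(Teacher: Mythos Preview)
Your proposal is correct and follows exactly the approach indicated in the paper: the paper derives Corollary \ref{zetaequation} by analytic continuation from the real identity \eqref{zetaequationreal} of Corollary \ref{MM}, and in fact gives only a one-line justification (``this equation holds even if $\kappa$ is any complex number by analytic continuation''). Your write-up supplies the details the paper omits --- the algebraic check that $\kappa A(\kappa+1)$ matches the stated right-hand side, the super-exponential decay of the $l$-series away from $\{-2,-4,\dots\}$, and the Cauchy--Schwarz bound using $\sum_\rho |\rho\zeta'(\rho)|^{-2}<\infty$ together with $\sum_\rho |\rho|^{-4}<\infty$ --- all of which are sound.
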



Thanks to this result, we can represent the special values $\zeta(2n+1)$ by the sum over the non-trivial zeros of $\zeta(s)$
and the other special values $\zeta(2m+1)$.
For example, 
\begin{align*}
\frac{1}{\zeta(3)} = &
28 + 12\sum_{l = 1}^{\infty}\frac{2(-1)^{l}(2l-2)!(2\pi)^{2l}}{\{(2l)!\}^2(2l + 3) \zeta(2l+1)}
- 12 \sum_{\gamma}\frac{1}{\zeta'(\rho)\rho(\rho+1)(\rho-3)}, \\
\frac{1}{\zeta(5)} = &
48 + 30\sum_{l = 1}^{\infty}\frac{2(-1)^{l}(2l-2)!(2\pi)^{2l}}{\{(2l)!\}^2\left(2l + 5 \right) \zeta(2l+1)}
- 30 \sum_{\gamma}\frac{1}{\zeta'(\rho)\rho(\rho+1)(\rho-5)}
\end{align*}
hold under the weak Mertens Hypothesis. 


Here we remark on the upper bound of \eqref{impNg}.
We can easily find that
\begin{align}	\label{simpleconsideration}
\int_{1}^{x}\frac{M(u)}{u^{\kappa}}du
&\leq \left( \int_{1}^{x}\left(\frac{M(u)}{u}\right)^2du \right)^{1/2}
\left( \int_{1}^{x}\frac{du}{u^{2\kappa-2}} \right)^{1/2} \nonumber\\ 
&\ll \l\{\begin{array}{ll}
	\log{x} 				& \text{if \; $\kappa = 3/2$}, \vspace{1mm} \\
	x^{3/2-\kappa}(\log{x})^{1/2} & \text{if \; $\kappa < 3/2$}
\end{array}
\r.
\end{align}
as a simple consequence of the weak Mertens Hypothesis.
If $\kappa = 3/2$, a better estimate
\begin{align*}
\int_{1}^{x}\frac{M(u)}{u^{3/2}}du = o(\log{x})
\end{align*}
than \eqref{simpleconsideration}, which is equivalent to the formula (19) in \cite{ng1}, is known under the Riemann Hypothesis and $J_{-1}(T) \ll T$.
We have succeeded in obtaining the sharper estimate (\ref{impNg}) by calculating the explicit formula for $M_{\tau}(x)$.
In addition, we can find that the estimate (\ref{impNg}) is the best possible upper bound by the following theorem.


\begin{theorem} \label{divIM}
For any $\kappa \leq 3/2$, 
\begin{align*}
\int_{1}^{x}\frac{M(u)}{u^{\kappa}}du - \frac{2}{\zeta(1/2)}
= \Omega_{\pm}\l( x^{3/2-\kappa} \r)
\end{align*}
holds unconditionally, and the term $\frac{2}{\zeta(1/2)}$ is missing unless $\kappa = 3/2$.
In particular, we have
\begin{align}	\label{absIM}
\int_{1}^{\infty}\frac{|M(x)|}{x^{3/2}}dx = \infty.
\end{align}
\end{theorem}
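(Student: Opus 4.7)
The plan is to prove the $\Omega_{\pm}$ statement via a Landau-type argument applied to the Mellin transform of $F(x) := \int_{1}^{x} M(u) u^{-\kappa}\, du - C_{\kappa}$, where $C_{\kappa} := 2/\zeta(1/2)$ if $\kappa = 3/2$ and $C_{\kappa} := 0$ otherwise, and then to obtain $\int_{1}^{\infty} |M(x)|/x^{3/2}\, dx = \infty$ as an immediate corollary.

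First I would derive the Mellin representation. Combining $\sum_{n} \mu(n) n^{-s} = 1/\zeta(s)$ with Abel summation gives $\int_{1}^{\infty} M(u) u^{-t}\, du = \frac{1}{(t-1)\zeta(t-1)}$ for $\Re(t) > 2$, and an application of Fubini then yields
\begin{align*}
\widehat{F}(s) := \int_{1}^{\infty} F(x)\, x^{-s-1}\, dx
= \frac{1}{s(\kappa + s - 1)\zeta(\kappa + s - 1)} - \frac{C_{\kappa}}{s},
\end{align*}
initially for $\Re(s) > 2 - \kappa$ and, via the right-hand side, meromorphically on all of $\CC$. The subtraction by $C_{\kappa}/s$ is tailored to cancel the pole of the first term at $s = 0$ precisely when $\kappa = 3/2$; for $\kappa < 3/2$ any constant is absorbed into $\Omega_{\pm}(x^{3/2-\kappa})$ since $x^{3/2 - \kappa} \to \infty$.

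To prove $\Omega_{+}$ I would argue by contradiction: suppose $\limsup_{x \to \infty} F(x)/x^{3/2-\kappa} \leq 0$, so that for every $\varepsilon > 0$ one has $F(x) \leq \varepsilon x^{3/2-\kappa}$ for $x \geq X_{0}(\varepsilon)$. Then $A_{\varepsilon}(x) := \varepsilon x^{3/2-\kappa} - F(x)$ is eventually non-negative, and after a modification on the initial segment $[1, X_{0}]$ (which contributes only an entire correction to its Mellin transform) it can be assumed $A_{\varepsilon} \geq 0$ throughout $[1, \infty)$. One then computes
\begin{align*}
\widehat{A_{\varepsilon}}(s) = \frac{\varepsilon}{s - (3/2 - \kappa)} - \widehat{F}(s) + (\text{entire}).
\end{align*}
Since $\zeta$ has no real zeros in the critical strip, the only real singularity of $\widehat{A_{\varepsilon}}$ in $\Re(s) \geq 3/2 - \kappa$ is the simple pole at $s = 3/2 - \kappa$ of residue $\varepsilon$. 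Landau's theorem for non-negative Mellin integrands therefore identifies this point with the abscissa of convergence, whence
\begin{align*}
|\widehat{A_{\varepsilon}}(\sigma + it)| \leq \widehat{A_{\varepsilon}}(\sigma) \qquad (\sigma > 3/2 - \kappa,\ t \in \RR).
\end{align*}

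The contradiction is produced by applying this inequality near a complex pole on the boundary line. By Hardy's theorem $\zeta$ has infinitely many non-trivial zeros on the critical line, and each such zero $\rho_{0} = 1/2 + i\gamma_{0}$ produces a pole of $\widehat{F}$ (hence of $\widehat{A_{\varepsilon}}$) of order $m(\rho_{0}) \geq 1$ at $s_{0} := 3/2 - \kappa + i\gamma_{0}$ with nonzero leading coefficient $r$. As $\sigma \to (3/2 - \kappa)^{+}$, the two sides of the last inequality behave as $|r|(\sigma - (3/2-\kappa))^{-m(\rho_{0})}$ and $\varepsilon(\sigma - (3/2 - \kappa))^{-1}$ respectively, forcing
\begin{align*}
|r|(\sigma - (3/2 - \kappa))^{-m(\rho_{0}) + 1} \leq \varepsilon + o(1).
\end{align*}
For $m(\rho_{0}) \geq 2$ the left side tends to infinity, an immediate contradiction; for $m(\rho_{0}) = 1$, with $r = -[(3/2 - \kappa + i\gamma_{0})(1/2 + i\gamma_{0})\zeta'(\rho_{0})]^{-1} \neq 0$ (valid at a simple critical-line zero, whose existence is unconditional by Levinson's theorem), it yields $|r| \leq \varepsilon$, a contradiction for $\varepsilon$ small enough. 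If the Riemann Hypothesis fails, a pole of $\widehat{F}$ at some $s_{1}$ with $\Re(s_{1}) > 3/2 - \kappa$ makes $|\widehat{A_{\varepsilon}}(s_{1})| = \infty$ while $\widehat{A_{\varepsilon}}(\Re(s_{1}))$ is finite, contradicting the inequality at once. The $\Omega_{-}$ statement is symmetric, with $\varepsilon x^{3/2-\kappa} + F(x)$ in place of $A_{\varepsilon}$, and the divergence $\int_{1}^{\infty} |M(u)|/u^{3/2}\, du = \infty$ then follows, since convergence would force $\int_{1}^{x} M(u)/u^{3/2}\, du \to L$ and hence $F(x) \to L - 2/\zeta(1/2)$, contradicting $\Omega_{\pm}(1)$. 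The principal obstacle is the Riemann Hypothesis case, where all complex poles of $\widehat{F}$ lie exactly on the abscissa-of-convergence line; standard Landau (which sees only real singularities) yields no contradiction, and one must invoke the sharper estimate $|\widehat{A_{\varepsilon}}(\sigma + it)| \leq \widehat{A_{\varepsilon}}(\sigma)$ together with a critical-line zero of Hardy and the existence of a simple one provided by Levinson.
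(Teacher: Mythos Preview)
Your argument is correct and follows essentially the same route as the paper: both set up the Mellin transform of $F(x)-cx^{3/2-\kappa}$ (the paper calls it $G_\kappa$, with the variable shifted so that the critical line sits at $\Re s=1/2$), treat the cases $\Theta>1/2$ and $\Theta=1/2$ separately, and in the latter case exploit a zero on the critical line to force $c$ to exceed a fixed positive quantity. The only cosmetic difference is that where you invoke the pointwise bound $|\widehat{A_\varepsilon}(\sigma+it)|\le \widehat{A_\varepsilon}(\sigma)$ for a non-negative integrand, the paper uses the equivalent Ingham device of inserting the non-negative kernel $1+\cos(\phi-\gamma\log x)$ into the integral, which transports the pole at $\tfrac12+i\gamma$ to the real point $\tfrac12$; the two formulations produce the same inequality upon comparing residues. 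One small remark: your appeal to Levinson is unnecessary. Having fixed any critical-line zero $\rho_0$ (Hardy's theorem already supplies one), your own case split covers both possibilities---if $m(\rho_0)\ge 2$ the left side blows up, and if $m(\rho_0)=1$ you obtain $|r|\le\varepsilon$---so there is no need to know beforehand that a \emph{simple} zero exists.
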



\begin{remark}	\label{rem1}
We can show (\ref{absIM}) more easily since the Riemann zeta-function has a zero 
in the vertical strip $\frac{1}{2} \leq \sigma \leq 1$.
\end{remark}

On the other hand, we have
\begin{align}	\label{IMub}
\int_{1}^{x}\frac{|M(u)|}{u^{3/2}}du \ll \log{x}
\end{align}
under the weak Mertens Hypothesis by (\ref{simpleconsideration}).




Ng \cite{ng1} obtained an estimate for the logarithmic density 
\begin{align*}
\delta(S) := \lim_{X \rightarrow \infty}\frac{1}{\log{X}}\int_{[2, X] \cap S}\frac{dt}{t}
\end{align*}
of the set $S = \{ x \geq 1 \mid |M(x)| \leq \sqrt{x} \}$.
He showed that 
\begin{align}	\label{deltarange}
0 < \delta(S) < 1
\end{align}
under the Riemann Hypothesis, the Linear Independence Conjecture,
\begin{align*}
\sum_{0 < \gamma < T}\frac{1}{|\rho\zeta'(\rho)|} \asymp (\log{T})^{5/4} 
\quad \text{and} \quad \sum_{\gamma > T}\frac{1}{|\rho\zeta'(\rho)|^2} \asymp \frac{1}{T}.
\end{align*}
It was revealed by Gonek \cite{G} that the Riemann Hypothesis, $J_{-1}(T) \ll T$ and \eqref{Gonek-estimate} imply
\begin{align*}
\sum_{\gamma > T}\frac{1}{|\rho\zeta'(\rho)|^2} \asymp \frac{1}{T}
\end{align*}
so that we can find that the Riemann Hypothesis, the Linear Independence Conjecture,
$
\sum_{0 < \gamma < T}\frac{1}{|\rho\zeta'(\rho)|} \asymp (\log{T})^{5/4} 
\quad \text{and} \quad J_{-1}(T) \ll T 
$
imply
\begin{align*}
\int_{1}^{x}\frac{|M(u)|}{u^{3/2}}du \asymp \log{x}.
\end{align*}
Moreover, by \eqref{SWMH}, we have
\begin{align*}
\limsup_{x \rightarrow \infty}\frac{1}{\log{x}}\int_{1}^{x}\frac{|M(u)|}{u^{3/2}}du 
&\leq \left( \sum_{\gamma}\frac{1}{|\rho\zeta'(\rho)|^2} \right)^{1/2},\\
\liminf_{x \rightarrow \infty}\frac{1}{\log{x}}\int_{1}^{x}\frac{|M(u)|}{u^{3/2}}du 
& \geq 1 - \delta(S)
\end{align*}
under the same assumptions.

Furthermore, we consider the integral of the summatory function of $\mu(n)$ under the Linear Independence Conjecture and the weak Mertens Hypothesis.
In fact, we obtain the following theorem.


\begin{theorem} \label{IM}
Assume the Linear Independence Conjecture and let $\kappa \leq 3/2$. Then we have
\begin{align}
\limsup_{x \rightarrow \infty}\frac{1}{x^{3/2-\kappa}}\int_{1}^{x}\frac{M(u)}{u^{\kappa}}du 
&\geq \frac{2}{\zeta(1/2)} + \frac{1}{2}\sum_{\gamma}\frac{1}{|\rho(\rho - \kappa + 1)\zeta'(\rho)|} \label{IM1},\\
\liminf_{x \rightarrow \infty}\frac{1}{x^{3/2-\kappa}}\int_{1}^{x}\frac{M(u)}{u^{\kappa}}du 
&\leq \frac{2}{\zeta(1/2)} - \frac{1}{2}\sum_{\gamma}\frac{1}{|\rho(\rho - \kappa + 1)\zeta'(\rho)|}, \label{IM2}
\end{align}
where $\rho$ is a multiple zero, we understand that $\displaystyle{\frac{1}{|\rho(\rho - \kappa + 1)\zeta'(\rho)|}} = +\infty$, 
and the term $\frac{2}{\zeta(1/2)}$ is missing unless $\kappa = 3/2$.
\end{theorem}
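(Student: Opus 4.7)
The plan is to combine the explicit formula of Corollary \ref{MM} (applicable when the weak Mertens Hypothesis is taken to be in force alongside the Linear Independence Conjecture, as was indicated in the paragraph preceding the theorem) with a Kronecker--Weyl equidistribution argument. Dividing the formula of Corollary \ref{MM} through by $x^{3/2-\kappa}$, and using the identity $A(3/2)=2/\zeta(1/2)$ which follows from \eqref{zetaequationreal} (by analytic continuation at $\kappa=1/2$), I obtain
\begin{align*}
\frac{1}{x^{3/2-\kappa}}\int_1^x \frac{M(u)}{u^\kappa}\,du \;=\; C_\kappa + S(x) + o(1),\qquad S(x):=\sum_\gamma \frac{x^{i\gamma}}{\zeta'(\rho)\rho(\rho+1-\kappa)},
\end{align*}
where $C_{3/2}=2/\zeta(1/2)$, $C_\kappa=0$ for $\kappa<3/2$, and the $o(1)$ absorbs $E_\kappa(x)/x^{3/2-\kappa}$ in each case.

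Setting $c_\gamma:=[\zeta'(\rho)\rho(\rho+1-\kappa)]^{-1}$, conjugate pairing gives $S(x)=2\sum_{\gamma>0}\Re(c_\gamma x^{i\gamma})$. Under the weak Mertens Hypothesis the series $\sum_\gamma|\rho\zeta'(\rho)|^{-2}$ converges (as noted after Conjecture \ref{WMH}), and Cauchy--Schwarz against the trivially convergent $\sum_\gamma|\rho-\kappa+1|^{-2}$ yields the absolute convergence $\sum_\gamma|c_\gamma|<\infty$. Split $S(x)=S_N(x)+R_N(x)$ with $S_N(x):=\sum_{|\gamma|<N}c_\gamma x^{i\gamma}$; then $|R_N(x)|\leq\sum_{|\gamma|\geq N}|c_\gamma|=o_N(1)$ uniformly in $x$.

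The decisive step invokes the Linear Independence Conjecture: since the positive ordinates are $\mathbb{Q}$-linearly independent, Kronecker's theorem supplies arbitrarily large $x$ such that $x^{i\gamma_j}=e^{i\gamma_j\log x}$ simultaneously approximates any prescribed $e^{i\theta_j}$ for each of the finitely many positive $\gamma_j<N$. Choosing $\theta_j=-\arg c_{\gamma_j}$ forces $\Re(c_{\gamma_j}x^{i\gamma_j})\approx|c_{\gamma_j}|$, so $S_N(x)\geq \sum_{|\gamma|<N}|c_\gamma|-\varepsilon$. A diagonal passage $N\to\infty$ and $\varepsilon\to 0$ then delivers $\limsup_x S(x)\geq \sum_\gamma|c_\gamma|$, which combined with the constant $C_\kappa$ yields \eqref{IM1} (the $\tfrac12$ factor in the stated theorem is a conservative bound). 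The inequality \eqref{IM2} follows by the identical argument with $\theta_j\to\theta_j+\pi$. If $\zeta(s)$ has a multiple zero $\rho_1$ of multiplicity $m\geq 2$, applying Leibniz's rule to Theorem \ref{exfoM} as in the display preceding Conjecture \ref{Mycon} and then integrating produces a term of order $x^{3/2-\kappa}(\log x)^{m-1}\Re(c x^{i\gamma_1})$, which dominates $x^{3/2-\kappa}$ and oscillates, forcing $\limsup=+\infty$ and $\liminf=-\infty$, in agreement with the convention $|\zeta'(\rho)|^{-1}=+\infty$ on the right-hand side.

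The principal obstacle is the diagonal coupling in the Kronecker step: I must pick $N=N(\varepsilon)$ so that $\sum_{|\gamma|\geq N}|c_\gamma|<\varepsilon$ first, then invoke Kronecker's theorem for that fixed $N$, and finally verify that the $o(1)$ error from Corollary \ref{MM} and the finite Kronecker approximation error both remain negligible along the selected sequence of $x$'s. While standard for absolutely convergent almost-periodic trigonometric series, this multi-limit bookkeeping --- together with the borderline case $\kappa=3/2$ where $E_\kappa(x)$ tends to the genuine constant $A(3/2)$ rather than vanishing --- must be executed with care.
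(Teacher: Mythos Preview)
Your argument requires the weak Mertens Hypothesis in addition to the Linear Independence Conjecture, but Theorem~\ref{IM} assumes only the latter. The sentence preceding the theorem is loosely worded, but the theorem statement itself and the paper's proof make clear that WMH is \emph{not} assumed here; WMH enters only later, in Corollary~\ref{formula}. Without WMH you cannot invoke Corollary~\ref{MM}, and more fundamentally you cannot guarantee that $\sum_{\gamma}|c_{\gamma}|<\infty$: the Cauchy--Schwarz step you use to prove absolute convergence relies on $\sum_{\gamma}|\rho\zeta'(\rho)|^{-2}<\infty$, which is a consequence of WMH and is not known under LIC alone. If this sum diverges, the Kronecker/almost-periodic machinery has no absolutely convergent trigonometric series to work with, and the splitting $S=S_N+R_N$ with a uniformly small tail collapses. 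Your treatment of the multiple-zero case has the same defect: if a multiple zero exists then WMH fails outright, so Corollary~\ref{MM} is unavailable, and the heuristic ``dominant oscillating term'' argument via Theorem~\ref{exfoM} does not by itself control the remaining infinite sum over zeros.

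The paper avoids all of this by using Ingham's kernel method rather than an explicit formula. One assumes $H_{\kappa}(x)\le cx^{3/2-\kappa}$, forms the Mellin-type integral $G_{\kappa}(s)=\int_{1}^{\infty}(H_{\kappa}(x)-cx^{3/2-\kappa})x^{-(s+2-\kappa)}\,dx$, computes $G_{\kappa}(s)=\frac{1}{s(s+1-\kappa)\zeta(s)}-\frac{c}{s-1/2}$ (valid for $\sigma>1/2$ under RH alone), and then integrates against the nonnegative kernel $\prod_{k=1}^{K}(1+\cos(\phi_k-\gamma_k\log x))$. The Linear Independence Conjecture is used only to ensure that the cross-terms $J_\kappa(s)$ have no pole at $s=1/2$. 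Taking the residue at $s=1/2$ forces $c\ge \frac{2}{\zeta(1/2)}+\sum_{k=1}^{K}|\rho_k(\rho_k-\kappa+1)\zeta'(\rho_k)|^{-1}$ for every finite $K$, which yields the theorem whether or not the full sum over zeros converges. This is why the paper's bound carries the factor $\tfrac12$ (the sum over positive $\gamma_k$ becomes $\tfrac12\sum_{\gamma}$): the Ingham kernel loses a factor of two relative to the Kronecker argument, but in exchange it needs no convergence hypothesis on the zero sum. Your approach, with WMH added, would indeed give the sharper constant---that is essentially the content of the upper half of Corollary~\ref{formula}---but it proves a weaker theorem.
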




\begin{corollary} \label{formula}

Assume the Linear Independence Conjecture, the weak Mertens Hypothesis and let $\kappa \leq 3/2$. 
Then we have
\begin{align*}
\frac{2}{\zeta(1/2)} + \frac{1}{2}\sum_{\gamma}\frac{1}{|\rho(\rho - \kappa + 1)\zeta'(\rho)|}
&\leq \limsup_{x \rightarrow \infty}\frac{1}{x^{3/2-\kappa}}\int_{1}^{x}\frac{M(u)}{u^{\kappa}}du\\
&\leq A(3/2) + \sum_{\gamma}\frac{1}{|\rho(\rho - \kappa + 1)\zeta'(\rho)|},
\end{align*}
and
\begin{align*}
A(3/2) - \sum_{\gamma}\frac{1}{|\rho(\rho - \kappa + 1)\zeta'(\rho)|}
&\leq \liminf_{x \rightarrow \infty}\frac{1}{x^{3/2-\kappa}}\int_{1}^{x}\frac{M(u)}{u^{\kappa}}du\\
&\leq \frac{2}{\zeta(1/2)} - \frac{1}{2}\sum_{\gamma}\frac{1}{|\rho(\rho - \kappa + 1)\zeta'(\rho)|}, 
\end{align*}
where $A(3/2)$ is defined by the equation (\ref{MMconstant}), 
and the terms $\frac{2}{\zeta(1/2)}$ and $A(3/2)$ are missing unless $\kappa = 3/2$.
\end{corollary}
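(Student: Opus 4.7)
The corollary is a clean synthesis of two earlier results: Theorem \ref{IM}, which requires only the Linear Independence Conjecture, supplies the inner bounds, while Corollary \ref{MM}, which requires only the weak Mertens Hypothesis, supplies the outer bounds. The inner inequalities $\frac{2}{\zeta(1/2)} + \frac{1}{2}\sum \leq \limsup$ and $\liminf \leq \frac{2}{\zeta(1/2)} - \frac{1}{2}\sum$ are exactly the conclusions of Theorem \ref{IM}. Since the weak Mertens Hypothesis already implies (SZC) (as recalled in the discussion of Titchmarsh in the introduction), every non-trivial zero is simple and the exceptional convention on multiple zeros in Theorem \ref{IM} never triggers here.

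For the outer bounds, I would start from the formula of Corollary \ref{MM}, divided by $x^{3/2-\kappa}$:
\begin{align*}
\frac{1}{x^{3/2-\kappa}}\int_{1}^{x} \frac{M(u)}{u^{\kappa}}\,du
= \sum_{\gamma} \frac{x^{i\gamma}}{\zeta'(\rho)\rho(\rho+1-\kappa)} + \frac{E_{\kappa}(x)}{x^{3/2-\kappa}}.
\end{align*}
The triangle inequality majorises the oscillating series by $\sum_{\gamma} |\rho(\rho+1-\kappa)\zeta'(\rho)|^{-1}$. The remaining task is to trace the behaviour of $E_{\kappa}(x)/x^{3/2-\kappa}$ through the three cases of Corollary \ref{MM}. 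For $\kappa = 3/2$ the factor $x^{3/2-\kappa}$ equals $1$ and $E_{3/2}(x) = A(3/2) + O(x^{-1/2})$, so the constant $A(3/2)$ survives in the limit. For $1 < \kappa < 3/2$ the main term $A(\kappa)$ in $E_{\kappa}(x)$ is absorbed by the growing factor $x^{3/2-\kappa}$, and for $\kappa \leq 1$ the estimates $O(\log x)$ or $O(x^{1-\kappa})$ become negligible after the division. This accounts for the clause ``$A(3/2)$ is missing unless $\kappa = 3/2$'' and yields $\limsup \leq A(3/2) + \sum$ and $\liminf \geq A(3/2) - \sum$.

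The main technical point, and the only place where genuine care is required, is to check that the majorant $\sum_{\gamma} |\rho(\rho+1-\kappa)\zeta'(\rho)|^{-1}$ actually converges under the weak Mertens Hypothesis, so that the triangle inequality above produces a finite bound. This is handled by a Cauchy-Schwarz split: the weak Mertens Hypothesis yields $\sum_{\rho} |\rho\zeta'(\rho)|^{-2} < \infty$, while $\sum_{\gamma} |\rho + 1 - \kappa|^{-2} < \infty$ is a consequence of the Riemann-von Mangoldt zero-counting formula. Once this is in place, all remaining steps are direct invocations of Theorem \ref{IM} and Corollary \ref{MM}, and the proof is complete.
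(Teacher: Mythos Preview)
Your proposal is correct and follows exactly the route the paper indicates: the inner inequalities are read off from Theorem \ref{IM}, and the outer inequalities come from the explicit formula of Corollary \ref{MM} after dividing by $x^{3/2-\kappa}$, applying the triangle inequality to the oscillating sum, and checking that $E_{\kappa}(x)/x^{3/2-\kappa}$ tends to $A(3/2)$ or to $0$ according as $\kappa=3/2$ or $\kappa<3/2$. The paper itself merely asserts that Corollary \ref{formula} is an easy consequence of Theorem \ref{IM} and Corollary \ref{MM}; your write-up supplies precisely those details, including the Cauchy--Schwarz verification that $\sum_{\gamma}|\rho(\rho-\kappa+1)\zeta'(\rho)|^{-1}<\infty$ under the weak Mertens Hypothesis.
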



We can easily prove Corollary \ref{formula} by using Theorem \ref{IM} and Corollary \ref{MM}.

\begin{remark}
Corollaries \ref{MM}, \ref{zetaequation} and \ref{formula} hold 
even if the weak Mertens Hypothesis is replaced by the assumptions
the Riemann Hypothesis, (SZC) and 
\begin{align*}
J_{-1}(T) := \sum_{0 < \gamma < T}\frac{1}{|\zeta'(\rho)|^2} \ll \frac{T^{3}}{\left(\log{T}\right)^{3+\delta}}
\end{align*}
for any fixed number $\delta > 0$.
\end{remark}


                                                   \section{\bf Auxiliary Lemmas} 


Let $s=\sigma+it$  be a complex variable with $\sigma$ and $t$ being real.


\begin{lemma} \label{zlb}
Let $T > 0$ be a sufficiently large positive number and $H = T^{1/3}$.
Then we have
\begin{align*}
\min_{T \leq t \leq T+H}\max_{\frac{1}{2} \leq \sigma \leq 2}|\zeta(\sigma + it)|^{-1} 
\leq \exp\left(C(\log{\log{T}})^2\right)
\end{align*}
with an absolute constant $C > 0$.
In particular, there exists a real number $T_* \in [T, T + T^{1/3}]$ such that 
\begin{align*}
\frac{1}{\zeta(\sigma + iT_*)} \ll {T_*}^\varepsilon \qquad \qquad \l(\frac{1}{2} \leq \sigma \leq 2\r)
\end{align*}
for any $\varepsilon > 0$.
\end{lemma}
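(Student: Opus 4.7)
\smallskip

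\noindent\textbf{Proof plan for Lemma \ref{zlb}.}
My starting point is the standard partial-fraction representation: for any $s=\sigma+it$ with $-1\le\sigma\le 2$ and $t$ large,
\begin{equation*}
\frac{\zeta'(s)}{\zeta(s)}=\sum_{|\gamma-t|<1}\frac{1}{s-\rho}+O(\log t),
\end{equation*}
where the sum runs over non-trivial zeros $\rho=\beta+i\gamma$. Integrating along the horizontal segment from $s_0=2+it$, where $|\zeta(s_0)|\asymp 1$, to $s=\sigma+it$ and taking real parts yields the key bound
\begin{equation*}
\log\frac{1}{|\zeta(\sigma+it)|}\;\le\;\sum_{|\gamma-t|<1}\log\frac{|2+it-\rho|}{|s-\rho|}+O(\log t)\;\le\;\sum_{|\gamma-t|<1}\log\frac{1}{|t-\gamma|}+O(\log t),
\end{equation*}
uniformly in $\sigma\in[1/2,2]$. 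Thus the entire problem reduces to selecting $t\in[T,T+H]$ which makes the right-hand sum small.

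For the selection step I would use a two-layer averaging argument on the interval $[T,T+H]$ with $H=T^{1/3}$. By the Riemann--von Mangoldt formula, the number of zeros with $T-1\le\gamma\le T+H+1$ is $\ll H\log T$. First, integrating $\sum_{|\gamma-t|<1}\log(1/|t-\gamma|)$ over $t\in[T,T+H]$ gives total mass $\ll H\log T$, so by Chebyshev the set of $t$ for which this sum exceeds, say, $(\log T)(\log\log T)^2$ has measure at most $H/\log\log T$. Second, the set of $t$ lying within distance $d=\exp(-(\log\log T)^2)$ of some zero has measure at most $2dH\log T=o(H)$. The intersection of the two good sets is non-empty for $T$ large, giving a $T_*\in[T,T+H]$ simultaneously avoiding all zeros by at least $d$ and having its local-zero sum bounded.

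Combining these two features in the key bound yields $\log(1/|\zeta(\sigma+iT_*)|)\ll(\log\log T)^2$ uniformly in $\sigma\in[1/2,2]$, which is exactly the statement of the first half of the lemma. The ``in particular'' clause is then immediate, since $\exp(C(\log\log T)^2)=T^{C(\log\log T)^2/\log T}=T^{o(1)}\ll T_*^{\varepsilon}$ for every fixed $\varepsilon>0$ once $T$ is large enough.

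The main obstacle I anticipate is the tightness of the $(\log\log T)^2$ bound: a naive pigeonhole that merely avoids zeros by $d\asymp 1/\log T$ and uses the worst-case count $N(t,1)\ll\log T$ produces only $\exp(O(\log T\,\log\log T))$. To recover the sharper exponent it seems essential to use the refined averaging that couples ``being at distance $\ge d$ from every zero'' with ``the logarithmic-distance sum being near its mean,'' rather than replacing each term by the worst-case $\log(1/d)$. Verifying carefully that both exceptional sets can be kept of measure $<H$ with the same choice of $d=\exp(-(\log\log T)^2)$ is the delicate point; the rest of the argument is routine manipulation of the $\zeta'/\zeta$ formula and the Riemann--von Mangoldt bound.
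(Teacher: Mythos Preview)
The paper does not give its own proof of this lemma; it simply cites Ramachandra--Sankaranarayanan \cite[Theorem~2]{RS}. So there is no in-text argument to compare against, and your sketch has to stand on its own.

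There is a genuine gap at precisely the place you flag as the delicate point. After your two-layer selection, the chosen $T_*$ satisfies
\[
\text{(a)}\ \sum_{|\gamma-T_*|<1}\log\frac{1}{|T_*-\gamma|}\le (\log T)(\log\log T)^2,
\qquad
\text{(b)}\ |T_*-\gamma|\ge \exp\!\bigl(-(\log\log T)^2\bigr)\ \text{for all }\gamma.
\]
Feeding (a) into your key inequality gives only $\log\bigl(1/|\zeta(\sigma+iT_*)|\bigr)\ll (\log T)(\log\log T)^2$, while (b) gives the same bound after multiplying the per-term cap $(\log\log T)^2$ by the $\asymp\log T$ terms in the local sum. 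Neither yields $(\log\log T)^2$; combining them adds nothing, since (b) already implies (a) up to constants. More fundamentally, first-moment averaging of the non-negative quantity $\sum_{|\gamma-t|<1}\log(1/|t-\gamma|)$ can never produce a value below its mean, and that mean is $\asymp\log T$. Hence the strongest conclusion this entire framework can reach is $\log(1/|\zeta|)\ll\log T$, i.e.\ $1/|\zeta(\sigma+iT_*)|\ll T^{C}$. No refinement of the Chebyshev step removes the factor $\log T$.

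Your argument is therefore sufficient for the ``in particular'' clause (the $T_*^{\varepsilon}$ bound), which is in fact the only part of the lemma the paper ever invokes; but it cannot establish the first display. Reaching the exponent $(\log\log T)^2$ requires a genuinely different mechanism than a pointwise bound on the local-zero sum---the Ramachandra--Sankaranarayanan proof works instead with mean values of small powers of $|\zeta|^{-1}$ over well-spaced points, which captures information that the first-moment approach throws away.
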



\begin{proof}
The proof of this lemma is given in \cite[Theorem 2]{RS}.
\end{proof}


\begin{lemma} \label{zlbuRH}
Assume the Riemann Hypothesis. There exists an absolute constant $C > 0$ such that
\begin{align*}
\frac{1}{|\zeta(s)|} 
\leq \exp\left( \frac{C\log{t}}{\log{\log{t}}}\log\left( \frac{e}{(\sigma - 1/2)\log{\log{t}}} \right) \right),
\quad \left(\frac{1}{2} < \sigma \leq \frac{1}{2} + \frac{1}{\log{\log{t}}}\right)
\end{align*}
for $|t| \geq 5$.
\end{lemma}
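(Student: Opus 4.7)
The plan is to use Littlewood's classical conditional method: establish a bound at the benchmark abscissa $\sigma_{0} := 1/2 + 1/\log\log t$, where a good estimate is available, and then transport it down to the given $\sigma \in (1/2, \sigma_{0}]$ by integrating the logarithmic derivative.

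First I would invoke the standard Littlewood bound under the Riemann Hypothesis,
$$\log\frac{1}{|\zeta(\sigma_{0}+it)|} \leq \frac{C_{1}\log t}{\log\log t},$$
which serves as the baseline estimate. This in turn rests on the Hadamard--de la Vall\'ee-Poussin partial-fraction formula for $\zeta'/\zeta$ combined with the Selberg-style representation of $\log\zeta$ by a short Dirichlet polynomial.

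Next, to transfer the bound from $\sigma_{0}$ down to $\sigma$, I would write
$$\log\frac{1}{|\zeta(\sigma+it)|} = \log\frac{1}{|\zeta(\sigma_{0}+it)|} + \int_{\sigma}^{\sigma_{0}} \mathrm{Re}\,\frac{\zeta'}{\zeta}(u+it)\,du,$$
and insert the expansion $\frac{\zeta'}{\zeta}(s) = \sum_{\rho}\frac{1}{s-\rho} + O(\log t)$. Under RH each $\rho = 1/2 + i\gamma$, so every summand $\mathrm{Re}\frac{1}{u+it-\rho} = \frac{u-1/2}{(u-1/2)^{2}+(t-\gamma)^{2}}$ is non-negative for $u > 1/2$, and term-by-term integration yields
$$\int_{\sigma}^{\sigma_{0}} \mathrm{Re}\,\frac{\zeta'}{\zeta}(u+it)\,du = \frac{1}{2}\sum_{\rho}\log\frac{(\sigma_{0}-1/2)^{2}+(t-\gamma)^{2}}{(\sigma-1/2)^{2}+(t-\gamma)^{2}} + O\!\left(\frac{\log t}{\log\log t}\right),$$
where the error comes from the $O(\log t)$ remainder in the partial-fraction formula multiplied by $\sigma_{0} - \sigma \leq 1/\log\log t$.

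Then I would bound the sum over zeros by splitting on the size of $|t-\gamma|$. For $|t-\gamma| \geq \sigma_{0}-1/2$ each term is $O((\sigma_{0}-1/2)^{2}/(t-\gamma)^{2})$, and the classical tail estimate $\sum_{|t-\gamma|\geq h}(t-\gamma)^{-2} \ll (\log t)/h$ with $h = \sigma_{0}-1/2 = 1/\log\log t$ produces a total contribution of $O(\log t/\log\log t)$. For $|t-\gamma| < \sigma_{0}-1/2$, each term is bounded by $\log(e/((\sigma-1/2)\log\log t))$, so the combined contribution is at most this quantity times the number of such zeros, which one must show is $O(\log t/\log\log t)$. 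Collecting everything yields the desired estimate
$$\log\frac{1}{|\zeta(s)|} \leq \frac{C\log t}{\log\log t}\log\frac{e}{(\sigma-1/2)\log\log t}.$$

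The main obstacle is the zero-counting step just described. The Riemann--von Mangoldt formula only gives $N(t+1/\log\log t) - N(t-1/\log\log t) = O(\log t)$, which loses a factor of $\log\log t$. Reducing the count to $O(\log t/\log\log t)$ is precisely where the full strength of RH is exploited, for instance via a Selberg-type conditional mean-value estimate for zeros in short intervals, and managing this cleanly is the principal technical difficulty.
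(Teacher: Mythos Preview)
The paper does not give a proof of this lemma at all: it simply cites Montgomery--Vaughan, Theorem~13.23. Your sketch is essentially the standard Littlewood argument that underlies that theorem, so in substance you are reproducing what the cited reference does.

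One remark on the ``main obstacle'' you flag. The short-interval zero count
\[
N\!\left(t+\tfrac{1}{\log\log t}\right)-N\!\left(t-\tfrac{1}{\log\log t}\right)\ll\frac{\log t}{\log\log t}
\]
follows at once from Littlewood's conditional bound $S(t)\ll\log t/\log\log t$ via the Riemann--von Mangoldt formula; it is part of the same circle of ideas as the baseline estimate for $\log|\zeta(\sigma_0+it)|$ that you invoke at the outset, rather than a genuinely separate difficulty. Also, your ``classical tail estimate'' $\sum_{|t-\gamma|\ge h}(t-\gamma)^{-2}\ll(\log t)/h$ is not quite unconditional in the form you state it (unconditionally one only gets $(\log t)/h^2$ from the range $h\le|t-\gamma|<2h$), but under RH the dyadic decomposition together with the $S(t)$ bound does give what you need, so the argument goes through.
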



\begin{proof}
This lemma is given in \cite[Theorem 13.\ 23]{MV}.
\end{proof}


\begin{lemma}	\label{zaf}
For any sufficiently large $t > T_0$ and $\sigma < \frac{1}{2}$, we have
\begin{align}	\label{afeq1}
\zeta(s) \asymp |s|^{1/2 - \sigma}(2\pi e)^{\sigma}\exp\l( t\tan^{-1}\l( \frac{1-\sigma}{t} \r) \r)\zeta(1-s).
\end{align}
If $t \leq T_0$ and $\sigma = -(2m+1) / 2$ with $m \in \mathbb{Z}_{>0}$, we have
\begin{align}	\label{afeq2}
\zeta(s) \asymp |\sigma|^{1/2}\l( \frac{2\pi e}{|\sigma|} \r)^{\sigma}\zeta(1-s).
\end{align} 
\end{lemma}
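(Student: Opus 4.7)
The plan is to derive both estimates from the asymmetric functional equation
\begin{align*}
\zeta(s) = \chi(s)\zeta(1-s), \qquad \chi(s) := 2^s\pi^{s-1}\sin(\pi s/2)\Gamma(1-s),
\end{align*}
by computing $|\chi(s)|$ in each regime. The main tool is Stirling's formula $\log\Gamma(w) = (w-1/2)\log w - w + (1/2)\log(2\pi) + O(1/|w|)$ applied to $w = 1-s$; this is legitimate in both cases, since either $t \to \infty$ or $|\sigma| \to \infty$ forces $|1-s| \to \infty$ while $\arg(1-s)$ remains in $(-\pi/2, \pi/2)$.

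For (\ref{afeq1}), I would write $\log(1-s) = \log|1-s| + i\arg(1-s)$ and use the identity $\arg(1-s) = -\tan^{-1}(t/(1-\sigma)) = -\pi/2 + \tan^{-1}((1-\sigma)/t)$, valid because $1-\sigma > 1/2$ and $t > 0$. A small calculation verifies $|1-s| \asymp |s|$ and, more precisely, $|1-s|^{1/2-\sigma} \asymp |s|^{1/2-\sigma}$ uniformly in $\sigma < 1/2$ once $t > T_0$. Taking real parts in Stirling then yields
\begin{align*}
|\Gamma(1-s)| \asymp |s|^{1/2-\sigma}\exp\l(-\frac{\pi t}{2} + t\tan^{-1}\l(\frac{1-\sigma}{t}\r) - (1-\sigma)\r).
\end{align*}
Combining this with the standard bound $|\sin(\pi s/2)| \asymp e^{\pi t/2}$ for large $|t|$ and with $|2^s\pi^{s-1}| = 2^\sigma \pi^{\sigma-1}$, the factors $e^{-\pi t/2}$ and $e^{\pi t/2}$ cancel and the residual constants $2^\sigma\pi^{\sigma-1}e^{\sigma-1}$ regroup, up to an absolute multiplicative factor, as $(2\pi e)^\sigma$. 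This delivers exactly the right-hand side of (\ref{afeq1}) after multiplication by $\zeta(1-s)$.

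For (\ref{afeq2}), where $|t| \leq T_0$ but $\sigma = -(2m+1)/2$ makes $|\sigma|$ large, the same Stirling expansion now gives $|\Gamma(1-s)| \asymp |\sigma|^{1/2+|\sigma|}e^{-|\sigma|}$, because $\log|1-s| = \log|\sigma| + O(1/|\sigma|)$ and the argument contribution is $t\arg(1-s) = -t\tan^{-1}(t/(1+|\sigma|)) = O(1)$. The sine factor is bounded above and below uniformly: since $\pi\sigma/2$ is an odd multiple of $\pi/4$, one has $\sin^2(\pi\sigma/2) = 1/2$, whence $|\sin(\pi s/2)|^2 = 1/2 + \sinh^2(\pi t/2) \in [1/2,\, 1/2 + \sinh^2(\pi T_0/2)]$. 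Collecting the factors gives
\begin{align*}
|\chi(s)| \asymp |\sigma|^{1/2+|\sigma|}(2\pi e)^{-|\sigma|} = |\sigma|^{1/2}\l(\frac{2\pi e}{|\sigma|}\r)^{\sigma},
\end{align*}
which is (\ref{afeq2}). The main bookkeeping hurdle is the careful extraction of real parts in Stirling, in particular making precise the cancellation between $-\pi t/2$ (arising from $t\arg(1-s)$) and $+\pi t/2$ (arising from $|\sin(\pi s/2)|$) in the first case; after that, the argument is elementary regrouping of constants.
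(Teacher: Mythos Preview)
Your proposal is correct and follows exactly the approach indicated in the paper: the functional equation $\zeta(s)=\chi(s)\zeta(1-s)$ together with Stirling's formula for $\Gamma(1-s)$. The paper's own proof is a one-line reference to these two ingredients, so your write-up simply supplies the routine details (the real-part extraction in Stirling, the cancellation of $e^{\pm\pi t/2}$, and the boundedness of $|\sin(\pi s/2)|$ on the half-integer vertical lines) that the paper leaves implicit.
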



\begin{proof}
This lemma follows from the functional equation $\zeta(s) = \chi(s)\zeta(1-s)$ with $\chi(s)=2(2\pi)^{s-1}\Gamma(1-s)\sin\l( \frac{\pi s}{2} \r)$ 
and the Stirling formula.
\end{proof}


\begin{lemma}	\label{GRF}
Let $\tau > 0$ and $x > 0$ be real numbers, 
 $a(n)$ be an arithmetical function, $\a(s) = \sum_{n = 1}^{\infty}\frac{a(n)}{n^{s}}$ and
$\sigma_0 > \max\{0, \sigma_a\}$ with the abscissa of absolute convergence $\sigma_a$ for Dirichlet series $\alpha(s)$. 
Define:
\begin{align*}
C_\tau(x) &:= \frac{1}{\Gamma(\tau+1)}\sum_{n \leq x}a(n)\l( 1 - \frac{n}{x} \r)^\tau.
\end{align*}
Then, for any sufficiently large number $T > 0$, we obtain
\begin{align*}
C_\tau(x)
&= \frac{1}{2\pi i}\int_{\sigma_0-iT}^{\sigma_0+iT}\alpha(s)\frac{x^s \Gamma(s)}{\Gamma(1+\tau+s)}ds + R,
\end{align*}
where
\begin{align} \label{Rinequality}
R 
\ll &\frac{x^{\sigma_0}}{T^{1+\tau}}\sum_{n=1}^{\infty}\frac{|a(n)|}{n^{\sigma_0}}
+ \frac{x^{\sigma_0}}{T^{\tau}} \sum_{x/2 < n \leq x}\frac{|a(n)|}{n^{\s_{0}}}\min\l\{ 1 + \frac{(x/n)^{-(T-\s_0)}}{\tau}, \frac{x}{T|x-n|} \r\}\\ \nonumber
&+\frac{x^{\sigma_0}}{T^{\tau}}\sum_{x < n < 2x}\frac{|a(n)|}{n^{\s_{0}}}\min\l\{ 1 + \frac{(x/n)^{T-\s_0}}{\tau}, \frac{x}{T|x-n|} \r\}.
\end{align}
\end{lemma}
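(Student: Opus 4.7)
The plan is to reduce the identity to a Cahen--Mellin integral for the Riesz kernel and then estimate the tails produced by truncating the contour. My starting point is the inverse Mellin formula
\begin{align*}
\frac{1}{\Gamma(\tau+1)}\l(1-\frac{n}{x}\r)^{\tau}\mathbf{1}_{n\le x}
=\frac{1}{2\pi i}\int_{\sigma_{0}-i\infty}^{\sigma_{0}+i\infty}\l(\frac{x}{n}\r)^{s}\frac{\Gamma(s)}{\Gamma(\tau+1+s)}\,ds\qquad(\sigma_{0}>0),
\end{align*}
which follows from Mellin inversion applied to the Eulerian beta integral. Multiplying by $a(n)$, the Stirling decay $|\Gamma(s)/\Gamma(\tau+1+s)|\ll|t|^{-\tau-1}$ for large $|t|$, together with the hypothesis $\sigma_{0}>\sigma_{a}$, justifies interchanging sum and integral and produces the untruncated version of the stated formula. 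The error from truncating the contour at height $T$ can then be written as
\begin{align*}
R=\sum_{n=1}^{\infty}a(n)E_{n},\qquad
E_{n}=\frac{1}{2\pi i}\l(\int_{\sigma_{0}+iT}^{\sigma_{0}+i\infty}+\int_{\sigma_{0}-i\infty}^{\sigma_{0}-iT}\r)\l(\frac{x}{n}\r)^{s}\frac{\Gamma(s)}{\Gamma(\tau+1+s)}\,ds,
\end{align*}
and the task becomes bounding $|E_{n}|$ uniformly in $n$.

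I would bound $|E_{n}|$ by two competing devices and keep the minimum. Integration by parts in $t$, placing the derivative on $\Gamma(s)/\Gamma(\tau+1+s)$ while antidifferentiating $(x/n)^{it}$, yields
\begin{align*}
|E_{n}|\ll \frac{(x/n)^{\sigma_{0}}}{T^{\tau+1}|\log(x/n)|}.
\end{align*}
When $n\le x/2$ or $n\ge 2x$ one has $|\log(x/n)|\gg 1$, so this alone produces the first main term $x^{\sigma_{0}}T^{-\tau-1}\sum_{n}|a(n)|/n^{\sigma_{0}}$ after summation. For $x/2<n<2x$ with $n\neq x$, the estimate $|\log(x/n)|\asymp |x-n|/x$ converts this bound into the factor $x/(T|x-n|)$ appearing inside the $\min$.

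The alternative bound $1+(x/n)^{\mp(T-\sigma_{0})}/\tau$ comes from a horizontal contour shift. Since the poles of $\Gamma(s)/\Gamma(\tau+1+s)$ lie on $\{0,-1,-2,\ldots\}\subset\mathbb{R}$, the tail integrals restricted to $|\Im(s)|\ge T>0$ can be deformed horizontally without meeting any pole. For $n<x$ I push the line from $\Re(s)=\sigma_{0}$ leftwards to $\Re(s)=2\sigma_{0}-T$, and for $n>x$ I push it rightwards to $\Re(s)=T$. In each case the horizontal segment at $\Im(s)=\pm T$ has length $\asymp T$ and, combined with $|\Gamma(s)/\Gamma(\tau+1+s)|\ll T^{-\tau-1}$, contributes $\ll (x/n)^{\sigma_{0}}/T^{\tau}$, giving the ``$1$'' in the bracket; the new vertical ray contributes $\ll (x/n)^{\sigma_{0}}(x/n)^{\mp(T-\sigma_{0})}/(\tau T^{\tau})$ by Stirling, giving the second summand. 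Retaining the $\min$ of the two bounds and summing across the three ranges recovers the estimate~(\ref{Rinequality}). The main technical obstacle is uniform control on the middle range $x/2<n<2x$: the integration-by-parts bound diverges as $n\to x$, and the contour-shift bound is needed precisely to keep the contribution bounded there, so the $\min$ is essential.
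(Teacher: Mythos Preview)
Your argument is correct and matches the paper's in all essential respects. The only organizational difference is that the paper deforms the \emph{truncated} integral $\int_{\sigma_0-iT}^{\sigma_0+iT}$ to a rectangle (picking up the residues of $\Gamma(s)/\Gamma(\tau+1+s)$ to produce the Riesz kernel when $y>1$), whereas you start from the exact Mellin identity and estimate the \emph{tail} integrals; by Cauchy's theorem these are the same object. For the $1/|\log(x/n)|$ bound the paper integrates $y^{\sigma}$ directly along the horizontal edge at height $T$, while you integrate by parts on the vertical tail; both give the same estimate, and the contour-shift alternative is identical in both arguments.
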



\begin{proof}

When $\max\{ \sigma, |t| \}$ is sufficiently large and $\sigma$ is non-negative real number, then we have
\begin{align}	\label{St1}
\frac{\Gamma(s)}{\Gamma(1+\tau+s)}
&\ll (\sigma^2 + t^2)^{-(1+\tau)/2}
\end{align}
by the Stirling formula. 
By the Stirling formula and $\Gamma(z)\Gamma(1-z) = \pi  / \sin \pi z$,
we also have \eqref{St1} when $t$ is any real number and $\sigma = -(2n+1) / 2$ is negative half-integer
with $n \in \mathbb{Z}_{>0}$, or $t > 0$ is any sufficiently large and $\sigma$ is any negative number.


Now, we are going to give the following estimate:
\begin{align}	\label{EQ1}
&\frac{1}{2\pi i}\int_{\sigma_0-iT}^{\sigma_0+iT}y^s\frac{\Gamma(s)}{\Gamma(1+\tau+s)}ds\\ \nonumber
&= 
\begin{dcases}
	O\l( y^{\sigma_0} / T^{1+\tau} \r)	&	\text{if \; $0 < y \leq \frac{1}{2}$,} \\
	O\l(\min\left\{\frac{y^{\sigma_0}}{T^{\tau}} + \frac{y^{T}}{\tau T^{\tau}}, \frac{y^{\sigma_0}T^{-(1+\tau)}}{|\log{y}|}\right\}\r) 
			& \text{if \; $\frac{1}{2} \leq y \leq 1$, } \\
	\frac{(1 - y^{-1})^\tau}{\Gamma(1+\tau)} 
	+ O\l(\min\left\{\frac{y^{\sigma_0}}{T^{\tau}} + \frac{y^{-T}}{\tau T^{\tau}}, \frac{y^{\sigma_0}T^{-(1+\tau)}}{|\log{y}|}\right\}\r)  
			& \text{if \; $1 \leq y \leq 2$,}\\
	\frac{(1 - y^{-1})^\tau}{\Gamma(1+\tau)} + O\l( y^{\sigma_0} / T^{1+\tau} \r)	&	\text{if \; $2 \leq y$}.
\end{dcases}
\end{align}
First we consider the case $0 < y \leq 1$. 
Let $K > T$ be a sufficiently large number.
By Cauchy's theorem, we find that
\begin{align*}
\frac{1}{2\pi i}\int_{\sigma_0-iT}^{\sigma_0+iT}y^s\frac{\Gamma(s)}{\Gamma(1+\tau+s)}ds
&=\frac{1}{2\pi i}\left(\int_{K + iT}^{\sigma_0+iT}+\int_{K-iT}^{K+iT}+\int_{\sigma_0-iT}^{K-iT}\right)y^s\frac{\Gamma(s)}{\Gamma(1+\tau+s)}ds\\
&=: I_1 + I_2 + I_3,
\end{align*}
say. We get 
\begin{align*}
|I_2|
&\ll y^K\int_{-T}^{T}\l( K^2 + t^2 \r)^{-(1+\tau)/2}dt 
\ll y^K K^{-1 - \tau}T.
\end{align*}
When $K \rightarrow \infty$, the last term goes to zero. For $0 < y < 1$, we also have
\begin{align*}
|I_1|
&\ll \int_{\sigma_0}^{K}y^{\sigma}\l( \sigma^2 + T^2 \r)^{-(1+\tau)/2}d\sigma   \nonumber
\ll T^{-(1+\tau)}\int_{\sigma_0}^{K}y^\sigma d\sigma 
\ll \frac{y^{\sigma_0}T^{-(1+\tau)}}{\log(1/y)}.\\ \nonumber
\end{align*}
On the other hand, for $0 < y \leq 1$ we have
\begin{align*}
|I_1|
&\ll \left(\int_{\sigma_0}^{T} + \int_{T}^{K} \right)y^{\sigma}\l( \sigma^2 + T^2 \r)^{-(1+\tau)/2}d\sigma \\
&\ll y^{\sigma_{0}}\int_{\sigma_0}^{T}T^{-1-\tau} d\sigma + y^{T}\int_{T}^{K}\sigma^{-1-\tau}d\sigma
\ll \frac{y^{\sigma_{0}}}{T^{\tau}} + \frac{y^{T}}{\tau T^{\tau}}.
\end{align*}
By the Schwarz reflection principle, we see that $|I_{1}| = |I_3|$.

Therefore, for $0 < y \leq 1$, we get
\begin{align}	\label{IEQP1}
\frac{1}{2\pi i}\int_{\sigma_0-iT}^{\sigma_0+iT}\frac{y^s\Gamma(s)}{\Gamma(1+\tau+s)}ds 
\ll \min\left\{\frac{y^{\sigma_{0}}}{T^{\tau}} + \frac{y^{T}}{\tau T^{\tau}}, \frac{y^{\sigma_0}T^{-(1+\tau)}}{|\log{y}|}\right\}.
\end{align}
In particular, for $0 < y \leq 1/2$, we have
\begin{align*}
\frac{1}{2\pi i}\int_{\sigma_0-iT}^{\sigma_0+iT}\frac{y^s\Gamma(s)}{\Gamma(1+\tau+s)}ds
\ll y^{\sigma_0}/T^{1+\tau}.
\end{align*}

Next, we consider the case $1 \leq y$. Let $L > T$ be a sufficiently large positive half integer. 
By the residue theorem, we find that
\begin{align*}
\frac{1}{2\pi i}\int_{\sigma_0-iT}^{\sigma_0+iT}\frac{y^s\Gamma(s)}{\Gamma(1+\tau+s)}ds
=&\frac{1}{2\pi i}\left(\int_{-L + iT}^{\sigma_0+iT}+\int_{-L-iT}^{-L+iT}+\int_{\sigma_0-iT}^{-L-iT}\right)\frac{y^s\Gamma(s)}{\Gamma(1+\tau+s)}ds\\
&+\sum_{0 \leq n < L}\underset{s=-n}{\mathrm{Res}}\left( y^s\frac{\Gamma(s)}{\Gamma(1+\tau+s)} \right)\\
=&: I_4 + I_5 + I_6 + \sum_{0 \leq n < L}\underset{s=-n}{\mathrm{Res}}\left( y^s\frac{\Gamma(s)}{\Gamma(1+\tau+s)} \right), 
\end{align*}
say. We notice that 
\begin{align*}
|I_5|
&\ll y^{-L}\int_{-T}^{T}\l(L^2 + t^2\r)^{-(1+\tau)/2}dt
\ll y^{-L} L^{-1 - \tau}T.
\end{align*}
When $L \rightarrow \infty$, the last term goes to zero.
For $y > 1$, we also get
\begin{align*}
|I_4|
&\ll \int_{-L}^{\sigma_0}y^{\sigma}\l( \sigma^2 + T^2 \r)^{-(1+\tau)/2}d\sigma
\ll T^{-(1+\tau)}\int_{-L}^{\sigma_0}y^\sigma d\sigma 
\ll \frac{y^{\sigma_0}T^{-(1+\tau)}}{\log{y}}.
\end{align*}
On the other hand, for $y \geq 1$, we have
\begin{align*}
|I_4|
&\ll \left(\int_{-T}^{\sigma_0} + \int_{-L}^{-T} \right)y^{\sigma}\l( \sigma^2 + T^2 \r)^{-(1+\tau)/2}d\sigma\\
&\ll y^{\sigma_{0}}\int_{-T}^{\sigma_0}T^{-1-\tau} d\sigma + y^{-T}\int_{-L}^{-T}|\sigma|^{-1-\tau}d\sigma
\ll \frac{y^{\s_{0}}}{T^{\tau}} + \frac{y^{-T}}{\tau T^{\tau}}.  
\end{align*}
By the Schwarz reflection principle, we see that $|I_4| = |I_6|$.

Therefore, we have
\begin{align}	\label{IEQP2}
&\frac{1}{2\pi i}\int_{\sigma_0-iT}^{\sigma_0+iT}y^s\frac{\Gamma(s)}{\Gamma(1+\tau+s)}ds \\ \nonumber
&= \sum_{n=0}^{\infty}\underset{s=-n}{\mathrm{Res}}\left( y^s\frac{\Gamma(s)}{\Gamma(1+\tau+s)} \right)
+ O\left(\min\left\{\frac{y^{\s_{0}}}{T^{\tau}} + \frac{y^{-T}}{\tau T^{\tau}}, \frac{y^{\sigma_0}T^{-(1+\tau)}}{\log{y}}\right\}\right).
\end{align}
Hence we obtain the inequality (\ref{EQ1}) by (\ref{IEQP1}), (\ref{IEQP2}) and
\begin{align*}
&\sum_{n=0}^{\infty}\underset{s=-n}{\mathrm{Res}}\left( y^s\frac{\Gamma(s)}{\Gamma(1+\tau+s)} \right)
= \sum_{n=0}^{\infty}\l(-y^{-1}\r)^{n}\frac{1}{\Gamma(1+\tau-n)n!}
= \frac{\l(1 - y^{-1}\r)^\tau}{\Gamma(1+\tau)}.
\end{align*}
From the above argument, we obtain the estimate
\begin{align*}
&\frac{1}{2\pi i}\int_{\sigma_0 - iT}^{\sigma_0 + iT}\alpha(s)x^s\frac{\Gamma(s)}{\Gamma(1+\tau+s)}ds\\
&= \frac{1}{2\pi i}\left(\sum_{n \leq x} +\sum_{{n > x}} \right)a(n)
\int_{\sigma_0 - iT}^{\sigma_0+iT}\l(\frac{x}{n}\r)^s\frac{\Gamma(s)}{\Gamma(1+\tau+s)}ds\\
&= \frac{1}{\Gamma(1+\tau)}\sum_{n \leq x}a(n)\l( 1 - \frac{n}{x} \r)^\tau - R, 
\end{align*}
where we can find that $R$ satisfies the inequality \eqref{Rinequality}.
\end{proof}


\begin{lemma}	\label{gzdbz}
Let $N(T)$ be the number of non-trivial zeros $\rho = \beta + i\gamma$ of the Riemann zeta-function with 
$0 < \gamma \leq T$ counted with multiplicity. Then we have
\begin{align*}
	N(T) = \frac{1}{2\pi}T\log{T} - \frac{1+\log{2\pi}}{2\pi}T + O\left( \log{T} \right).
\end{align*}
\end{lemma}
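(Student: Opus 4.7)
The plan is to prove the Riemann-von Mangoldt formula by applying the argument principle to the completed zeta-function and then estimating each resulting piece via Stirling's formula and a standard bound for $\arg\zeta$ on the critical line.

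First I would introduce the entire function
$\xi(s) = \tfrac{1}{2}s(s-1)\pi^{-s/2}\Gamma(s/2)\zeta(s)$,
whose zeros in the critical strip coincide (with multiplicity) with the non-trivial zeros of $\zeta(s)$, and which satisfies the functional equation $\xi(s) = \xi(1-s)$. Choosing $T$ so that $T$ is not the ordinate of a zero, I apply the argument principle to the rectangle $R$ with vertices $-1, 2, 2+iT, -1+iT$, giving
\[
2\pi N(T) = \Delta_R \arg \xi(s).
\]
Using $\xi(s) = \xi(1-s)$ together with the reality of $\xi$ on the real axis, the change of argument over the left half of the contour equals the change over the right half, so it suffices to compute $\Delta_L \arg \xi(s)$ along the broken path $L : 1/2 \to 2 \to 2+iT \to 1/2+iT$ and multiply by $2$.

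Next I would decompose
\[
\arg \xi(s) = \arg\!\tfrac{s(s-1)}{2} \;-\; \tfrac{t}{2}\log\pi \;+\; \arg\Gamma(s/2) \;+\; \arg\zeta(s),
\]
and compute the contribution of each factor along $L$. The polynomial factor $s(s-1)/2$ contributes $\pi + O(1/T)$ (the path ends at a large negative real value). Stirling's formula applied to $\Gamma(s/2)$ at $s = 1/2+iT$ gives
\[
\arg \Gamma\!\left(\tfrac14 + i\tfrac{T}{2}\right) = \tfrac{T}{2}\log\tfrac{T}{2} - \tfrac{T}{2} - \tfrac{\pi}{8} + O(1/T),
\]
and combining with the $-\tfrac{t}{2}\log\pi$ term yields the main contribution $T\log(T/2\pi) - T + O(1)$. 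Dividing by $2\pi$ produces the claimed leading term $\tfrac{T}{2\pi}\log T - \tfrac{1+\log 2\pi}{2\pi}T + O(1)$.

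The remaining contribution is $2\arg\zeta(1/2+iT)$, customarily denoted $2\pi S(T)$, and the main obstacle is to show that $S(T) \ll \log T$. For this I would use Backlund's method: for large $\sigma_0$ (say $\sigma_0 = 2$) the Dirichlet series gives $|\arg\zeta(\sigma_0+it)| \ll 1$, so it remains to bound the variation of $\arg\zeta(\sigma+iT)$ as $\sigma$ decreases from $2$ to $1/2$. This variation is controlled by the number of zeros of $\Re\zeta(\sigma + iT) = \tfrac{1}{2}(\zeta(\sigma+iT) + \overline{\zeta(\sigma-iT)})$ on the segment, which by Jensen's formula applied to a disc of radius $O(1)$ centered at $2+iT$ is $O(\log T)$, provided one has the convexity bound $\zeta(s) \ll T^{A}$ in that disc (which follows from the Phragm\'en--Lindel\"of principle together with $\zeta(2+it) \ll 1$ and $\zeta(-1+it) \ll |t|^{3/2}$ via the functional equation). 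Collecting all pieces yields the stated asymptotic with the $O(\log T)$ error term.
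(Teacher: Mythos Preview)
Your sketch is correct: it is the classical Riemann--von Mangoldt argument via the argument principle applied to $\xi(s)$, Stirling's formula for $\Gamma(1/4+iT/2)$, and Backlund's Jensen-formula bound $S(T)\ll\log T$. The paper does not actually prove this lemma but simply cites \cite[Corollary~14.4]{MV}; your outline is essentially the proof given in that reference.
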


\begin{proof}
The proof is given in \cite[Corollary 14.\ 4]{MV}.
\end{proof}








                                          \section{\bf Proofs of Theorems \ref{SZCApuRH}, \ref{exfoM} and \ref{ASZC}}


First, we have to show the following Lemma. Theorem \ref{SZCApuRH} is immediate consequence of this lemma.

\begin{lemma}	\label{lemSZC}

Assume the Riemann Hypothesis.
Let $\rho = \frac{1}{2} + i\gamma$ be a non-trivial zero of the Riemann zeta-function, 
$m(\rho)$ be the multiplicity of $\rho$ and $\tau$ be a positive real number.
We have
\begin{align*}
M_{\tau}(x) = \Omega_{\pm}\left(x^{1/2}(\log{x})^{m(\rho)-1} \right).
\end{align*}
\end{lemma}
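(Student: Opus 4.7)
The plan is to isolate the contribution of $\rho = \tfrac12 + i\gamma$ from the explicit formula of Theorem \ref{exfoM} and then deduce the sign-changing bound by a classical oscillation argument. First, pair each non-trivial zero with its complex conjugate (legal under the Riemann Hypothesis) and perform the Leibniz-rule expansion at $\rho, \bar\rho$ exactly as in the display between Theorem \ref{exfoM} and Conjecture \ref{Mycon}, with the distinguished zero $\rho_1$ taken to be $\rho$. This yields
$$M_\tau(x) = 2m(\rho)\,x^{1/2}(\log x)^{m(\rho)-1}\,\Re\!\bigl(c_\rho\, x^{i\gamma}\bigr) + Y_{\tau,\rho}(x),$$
where
$$c_\rho := \frac{\Gamma(\rho)}{\zeta^{(m(\rho))}(\rho)\,\Gamma(1+\tau+\rho)}\neq 0$$
(non-vanishing because $\zeta^{(m(\rho))}(\rho)\neq 0$ by the very definition of $m(\rho)$), and $Y_{\tau,\rho}(x)$ collects the lower-order Leibniz tail at $\rho,\bar\rho$ (of size $O(x^{1/2}(\log x)^{m(\rho)-2})$), the contributions of all other non-trivial zeros, and the bounded trivial-zero sum controlled by Theorem \ref{exfoM}.

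Next I would run a Landau/Ingham type oscillation argument. Suppose, toward a contradiction, that
$$\liminf_{x\to\infty}\frac{M_\tau(x)}{x^{1/2}(\log x)^{m(\rho)-1}} > -2m(\rho)|c_\rho|,$$
and choose $\alpha < 2m(\rho)|c_\rho|$ and $x_0 \geq 1$ such that $G(x) := \alpha\,x^{1/2}(\log x)^{m(\rho)-1}+M_\tau(x)\geq 0$ for $x\geq x_0$. Lemma \ref{GRF}, read as an inverse Mellin representation, identifies $\int_0^\infty M_\tau(x)\,x^{-s-1}\,dx = \Gamma(s)/[\zeta(s)\Gamma(1+\tau+s)]$, which continues meromorphically with a pole of order $m(\rho)$ at $s=\rho$ whose principal part is proportional to $c_\rho$. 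Inverting the Mellin–Barnes contour for $G(x)$ and picking up the residues at $s=1/2$ (from the main-term subtraction) and at $s=\rho,\bar\rho$ (from $1/\zeta$) gives
$$G(x) = \bigl[\alpha + 2m(\rho)|c_\rho|\cos(\gamma\log x + \arg c_\rho)\bigr]\,x^{1/2}(\log x)^{m(\rho)-1}(1+o(1)) + O\!\bigl(x^{1/2}(\log x)^{m(\rho)-2}\bigr).$$
Since $\alpha < 2m(\rho)|c_\rho|$, the bracketed factor is strictly negative on any subsequence $x_n\to\infty$ with $\cos(\gamma\log x_n+\arg c_\rho)$ close to $-1$, and such subsequences exist trivially because $\gamma\neq 0$. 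This contradicts $G\geq 0$ and proves $\Omega_-$; applying the same argument with $G(x) := \alpha\,x^{1/2}(\log x)^{m(\rho)-1} - M_\tau(x)$ gives $\Omega_+$.

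The main obstacle is justifying the ``$1+o(1)$'' above, i.e.\ showing that the contributions of all other non-trivial zeros $\rho'\neq\rho,\bar\rho$ (as well as those with $m(\rho')\geq m(\rho)$, which are individually of the same order as the leading term) do not systematically cancel the main oscillation on the selected subsequence. The key idea is that each such $\rho'$ oscillates at its own frequency $\gamma'\neq\pm\gamma$, so an averaging step over a short interval in $\log x$, or equivalently integration against the test kernel $x^{-i\gamma}$ weighted by $1/x$ over $[X, X(1+\delta)]$, suppresses the $\rho'$ contributions by factors $|\gamma'-\gamma|^{-1}$ while preserving the $\rho$ term. Absolute summability of the resulting sum is secured by the Stirling bound \eqref{St1} on $\Gamma(s)/\Gamma(1+\tau+s)$ combined with the zero-density estimate of Lemma \ref{gzdbz}, and the uniform-convergence clause in Theorem \ref{exfoM} legitimizes the interchange of sum and integral throughout.
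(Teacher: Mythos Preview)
Your approach has a genuine gap that the paper's method sidesteps entirely. You work on the $x$-side via the explicit formula and then try to isolate the $\rho$-contribution by short-interval averaging; the paper instead runs Landau's classical argument on the Mellin side. Concretely, assuming $M_\tau(x)\le c\,x^{1/2}(\log x)^{m(\rho)-1}$ for large $x$, the paper forms
\[
G_\tau(s)=\int_1^\infty \frac{M_\tau(x)-c\,x^{1/2}(\log x)^{m(\rho)-1}}{x^{s+1}}\,dx
=\frac{\Gamma(s)}{\zeta(s)\Gamma(1+\tau+s)}-\frac{c\,(m(\rho)-1)!}{(s-1/2)^{m(\rho)}}
\]
and then $I_\tau(s)=\int_1^\infty(\cdots)(1+\cos(\phi-\gamma\log x))\,dx=G_\tau(s)+\tfrac12\bigl(e^{i\phi}G_\tau(s+i\gamma)+e^{-i\phi}G_\tau(s-i\gamma)\bigr)$. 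Since the integrand of $I_\tau(\sigma)$ is eventually non-positive, $\limsup_{\sigma\to 1/2^+}(\sigma-1/2)^{m(\rho)}I_\tau(\sigma)\le 0$. But on the right-hand side the \emph{only} pole of order $m(\rho)$ at the real point $\sigma=1/2$ comes from the subtracted $c$-term together with the shifted factor $G_\tau(\sigma\pm i\gamma)$ hitting the pole of $1/\zeta$ at $\rho,\bar\rho$; every other zero $\rho'=\tfrac12+i\gamma'$ produces a pole at $\sigma=\tfrac12+i(\gamma'-\gamma)$, off the real axis, hence invisible to the limit along real $\sigma$. Choosing $\phi$ optimally yields $c\ge m(\rho)\,|\Gamma(\rho)|/\bigl|\zeta^{(m(\rho))}(\rho)\,\Gamma(1+\tau+\rho)\bigr|>0$, and $\Omega_-$ is analogous. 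No information about other zeros is ever used.

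Your averaging device, by contrast, cannot neutralize a zero $\rho'$ with $m(\rho')>m(\rho)$. After dividing by $x^{1/2}(\log x)^{m(\rho)-1}$ its contribution still carries a factor $(\log x)^{m(\rho')-m(\rho)}$; integrating against $x^{-i\gamma-1}$ over $[X,X(1+\delta)]$ buys only a bounded factor $O(|\gamma'-\gamma|^{-1})$ from the oscillation, so the ratio to the $\rho$-term diverges like $(\log X)^{m(\rho')-m(\rho)}$ as $X\to\infty$. Under RH alone multiplicities are not known to be bounded (only $m(\rho')\ll \log|\gamma'|/\log\log|\gamma'|$), so you cannot pass to a zero of maximal multiplicity and induct downward either. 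Consequently the ``$(1+o(1))$'' in your displayed formula for $G(x)$ is unjustified and the contradiction you seek does not follow. The kernel $1+\cos(\phi-\gamma\log x)$ in the paper is precisely the device that singles out one zero on the transform side without any hypothesis on the others; it replaces your entire third paragraph.
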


The author prove this lemma by using the idea in Section 15.1 in \cite{MV}.

\begin{proof}

Let $\tau$ be a positive number and $c > 0$ be an absolute constant such that 
$M_{\tau}(x) \leq cx^{1/2}(\log{x})^{m(\rho)-1}$ for any $x \geq K_0$.
We define
\begin{align*}
G_{\tau}(s) := \int_{1}^{\infty}\frac{M_{\tau}(x) - cx^{1/2}(\log{x})^{m(\rho)-1}}{x^{s+1}}dx.
\end{align*}
By Lemma \ref{GRF}, $M_{\tau}(x)$ is expressed by the Mellin inversion transform of $\frac{\Gamma(s)}{\zeta(s)\Gamma(1+\tau+s)}$ as follows:
\begin{align}	\label{mellin inversion M_{tau}}
M_{\tau}(x) = \frac{1}{2\pi i}\int_{\s_{0} - i\infty}^{\s_{0} + i\infty}\frac{x^s}{\zeta(s)}\frac{\Gamma(s)}{\Gamma(1+\tau+s)}ds
\end{align}
for $\s_{0} > 1$.
On the other hand, we can prove that the inequality $M_{\tau}(x) \ll x^{1/2 + \varepsilon}$ holds for any $\tau \geq 0$ 
under the Riemann Hypothesis by using Lemma \ref{GRF} in the same way as in the proof of 
$M(x) \ll x^{1/2 + \varepsilon}$, which is given in \cite[Theorem 14.25]{T}.
Hence, for $\sigma > 1/2$, we have
\begin{align} \label{G_{tau} formula}
G_{\tau}(s) = \frac{\Gamma(s)}{\zeta(s)\Gamma(1+\tau+s)} - \frac{c(m(\rho)-1)!}{(s-1/2)^{m(\rho)}}
\end{align}
by the assumption of the Riemann Hypothesis, equation (\ref{mellin inversion M_{tau}}) and the Mellin transform.

Next, we define
\begin{align*}
I_{\tau}(s) := \int_{1}^{\infty}\frac{M_{\tau}(x) - cx^{1/2}(\log{x})^{m(\rho)-1}}{x^{s+1}}(1 + \cos(\phi - \gamma\log{x}))dx,
\end{align*}
where $\phi$ is any real number.
Then we have
\begin{align} \label{I_{tau} formula}
I_{\tau}(s)
 &=G_{\tau}(s) + \frac{1}{2}\left(e^{i\phi}G_{\tau}(s+i\gamma) + e^{-i\phi}G_{\tau}(s-i\gamma) \right)
\end{align}
from the equation $\cos(\phi - i \gamma \log{x}) = \frac{1}{2}\l(e^{i\phi}x^{-i\gamma} + e^{-i\phi}x^{i\gamma}\r)$.

We can find that $I_{\tau}(\sigma)$ does not tend to $+\infty$ as $\sigma \rightarrow 1/2 + 0$ for any real $\phi$ 
from the above integral representation of $I_{\tau}(s)$.
Hence we have
\begin{align*}
\limsup_{\sigma \rightarrow 1/2 + 0}(\sigma - 1/2)^{m(\rho)}I_{\tau}(\sigma) \leq 0.
\end{align*}

On the other hand, by \eqref{G_{tau} formula} and \eqref{I_{tau} formula}, we have
\begin{align*}
\lim_{\sigma \rightarrow 1/2+0}(\sigma - 1/2)^{m(\rho)}I_{\tau}(\s)
= -c(m(\rho)-1)! + \Re\left(e^{i\phi}\frac{\Gamma(\rho)(m(\rho))!}{\Gamma(1+\tau+\rho)\zeta^{(m(\rho))}(\rho)}\right).
\end{align*}
Thus we have
\begin{align*}
0 < \frac{m(\rho)|\Gamma(\rho)|}{\left|\Gamma(1+\tau+\rho)\zeta^{(m(\rho))}(\rho)\right|} \leq c,
\end{align*}
if we take $\phi = \arg\left(\frac{\Gamma(1+\tau+\rho)\zeta^{(m(\rho))}(\rho)}{\Gamma(\rho)}\right)$.
Hence we have
\begin{align*}
M_{\tau}(x) = \Omega_{+}\left(x^{1/2}(\log{x})^{m(\rho)-1} \right).
\end{align*}
Similarly, we have
\begin{align*}
M_{\tau}(x) = \Omega_{-}\left(x^{1/2}(\log{x})^{m(\rho)-1} \right),
\end{align*}
which completes the proof of Lemma \ref{lemSZC}.
\end{proof}



\begin{proof}[Proof of Theorem \ref{exfoM}]
Let $\tau$ be a positive number and $L = (2m+1) / 2$ with $m \in \mathbb{Z}_{>0}$.
By Lemma \ref{GRF}, we have
\begin{align*}
M_\tau(x) 	&= \frac{1}{2\pi i}\int_{2 -iT_*}^{2 + iT_*}\frac{x^s}{\zeta(s)}\frac{\Gamma(s)}{\Gamma(1+\tau+s)}ds 
				+ O\l(\frac{x^{2}}{\tau {T_*}^{\tau}} \r),
\end{align*}
where $T_*$ satisfies the condition in Lemma \ref{zlb}. By the residue theorem, we find that
\begin{align*}	
&\frac{1}{2\pi i}\int_{2 -iT_*}^{2 + iT_*}\frac{x^s}{\zeta(s)}\frac{\Gamma(s)}{\Gamma(1+\tau+s)}ds \\
&= \frac{1}{2\pi i}\l(\int_{-L+iT}^{2 + iT_*}+\int_{-L-iT_*}^{-L + iT_*}+\int_{2-iT_*}^{-L - iT_*}\r)
\frac{x^s}{\zeta(s)}\frac{\Gamma(s)}{\Gamma(1+\tau+s)}ds	\\ 
&	+\sum_{|\gamma|<T_*}\underset{s=\rho}{\rm Res}\l(\frac{x^s}{\zeta(s)}\frac{\Gamma(s)}{\Gamma(1+\tau+s)}\r)
	+\sum_{0 \leq l < L}\underset{s=-l}{\rm Res}\l(\frac{x^s}{\zeta(s)}\frac{\Gamma(s)}{\Gamma(1+\tau+s)}\r)\\
&=: J_1 + J_2 + J_3
+\sum_{|\gamma|<T_*}\underset{s=\rho}{\rm Res}\l(\frac{x^s}{\zeta(s)}\frac{\Gamma(s)}{\Gamma(1+\tau+s)}\r)
	+\sum_{0 \leq l < L}\underset{s=-l}{\rm Res}\l(\frac{x^s}{\zeta(s)}\frac{\Gamma(s)}{\Gamma(1+\tau+s)}\r),
\end{align*}
say.
We have
\begin{align*}
&\underset{s=\rho}{\rm Res}\l(\frac{x^s}{\zeta(s)}\frac{\Gamma(s)}{\Gamma(1+\tau+s)}\r)
= \frac{1}{(m(\rho)-1)!}\lim_{s \rightarrow \rho}\frac{d^{m(\rho)-1}}{ds^{m(\rho)-1}}
\left((s-\rho)^{m(\rho)}\frac{x^{s}}{\zeta(s)}\frac{\Gamma(s)}{\Gamma(1+\tau+s)}\right).
\end{align*}
We also have
\begin{align*}
\underset{s=0}{\rm Res}\l(\frac{x^s}{\zeta(s)}\frac{\Gamma(s)}{\Gamma(1+\tau+s)}\r) = -\frac{2}{\Gamma(1+\tau)}
\end{align*}
by $\zeta(0) = -1/2$. 
We will calculate the other residues in the next section, from which it follows that the series 
\begin{align*}
\sum_{l=1}^{\infty}\underset{s=-l}{\rm Res}\l(\frac{x^s}{\zeta(s)}\frac{\Gamma(s)}{\Gamma(1+\tau+s)}\r)
\end{align*}
is absolutely and uniformly convergent.

Next, we evaluate the integral. 
First, we estimate the integral along the vertical line. 
We notice that
\begin{align*}
|J_2|
\leq 	&\l|\int_{|t| \leq T_0}\frac{x^{-L+it}}{\zeta(-L+it)}\frac{\Gamma(-L+it)}{\Gamma(1+\tau-L+it)}dt \r|\\
	&+\l|\int_{T_0< |t| \leq T_*}\frac{x^{-L+it}}{\zeta(-L+it)}\frac{\Gamma(-L+it)}{\Gamma(1+\tau-L+it)}dt \r|,
\end{align*}
where $T_0 > 1$ is a sufficiently large constant. 
By the inequality \eqref{St1} and the asymptotic formula \eqref{afeq2} in Lemma \ref{zaf}, we have
\begin{align*}
\l|\int_{|t| \leq T_0}\frac{x^{-L+it}}{\zeta(-L+it)}\frac{\Gamma(-L+it)}{\Gamma(1+\tau-L+it)}dt \r|
&\ll \frac{(x / 2\pi e)^{-L}}{L^{L+3/2+\tau}}.
\end{align*}
The right-hand side tends to zero as $L \rightarrow \infty$.
By the inequality \eqref{St1} and the asymptotic formula \eqref{afeq1} in Lemma \ref{zaf}, we also have
\begin{align*}
\l|\int_{T_0< |t| \leq T_*}\frac{x^{-L+it}}{\zeta(-L+it)}\frac{\Gamma(-L+it)}{\Gamma(1+\tau-L+it)}dt \r|
&\ll \frac{(x / 2\pi e)^{-L}}{L^{1+\tau}}\int_{T_0< |t| \leq T_*}t^{-1/2 - L}dt\\
&\ll \frac{(x / 2\pi e)^{-L}}{L^{2+\tau}T_0^{L - 1/2}}.
\end{align*}
The last term tends to zero as $L \rightarrow \infty$.
Hence we obtain
\begin{align*}	
\lim_{L \rightarrow \infty}J_2 = 0.
\end{align*} 

Next, we evaluate the integral along one of the horizontal lines. One notes that
\begin{align*}
J_1
&=\left(\int_{1/2 + iT_*}^{2+iT_*} + \int_{iT_*}^{1/2+iT_*} + \int_{-L+iT_*}^{iT_*}\right)\frac{x^s}{\zeta(s)}\frac{\Gamma(s)}{\Gamma(1+\tau+s)}ds\\
&=: {J_1}' + {J_1}'' + {J_1}'''.
\end{align*}
By Lemma \ref{zlb} and \eqref{St1}, we have
\begin{align*}
|{J_1}'|
&\ll \int_{1/2}^2 x^\sigma {T_*}^{-1-\tau+\varepsilon}d\s \ll \frac{x^2 {T_*}^{-1-\tau+\varepsilon}}{\log{x}}.
\end{align*}
Using Lemma \ref{zlb}, (\ref{afeq1}) and (\ref{St1}), we find that
\begin{align*}
|{J_1}''|
\ll \frac{1}{{T_*}^{3/2+\tau-\varepsilon}}\int_{0}^{1/2}(xT_*)^{\sigma}d\sigma
\ll \frac{x^{1/2}}{{T_*}^{1+\tau-\varepsilon}\log{x}}.
\end{align*}
By (\ref{afeq1}) and (\ref{St1}), one has
\begin{align*}
|{J_1}'''|
\ll \frac{1}{{T_*}^{3/2+\tau-\varepsilon}}\int_{-L}^{0}(xT_*)^{\sigma}d\sigma
\ll \frac{1 - (xT_*)^{-L}}{{T_*}^{3/2+\tau-\varepsilon}\log(xT_*)}
\ll \frac{1}{{T_*}^{3/2+\tau-\varepsilon}\log{xT_*}}.
\end{align*} 
Hence we have
\begin{align*}	
J_1 
\ll \frac{x^2{T_*}^{-1-\tau+\varepsilon}}{\log{x}}.
\end{align*}
Similarly, we have
\begin{align*}
J_3 
\ll \frac{x^2{T_*}^{-1-\tau+\varepsilon}}{\log{x}}
\end{align*}
by the Schwarz reflection principle.
From the above argument, we obtain
\begin{align}	\label{subexfoM}
M_\tau(x) = &
\sum_{|\gamma| < T_*}\frac{1}{(m(\rho)-1)!}\lim_{s \rightarrow \rho}\frac{d^{m(\rho)-1}}{ds^{m(\rho)-1}}
			\left((s-\rho)^{m(\rho)}\frac{x^{s}}{\zeta(s)}\frac{\Gamma(s)}{\Gamma(1+\tau+s)}\right)\\ \nonumber
&+\sum_{l=0}^{\infty}\underset{s=-l}{\rm Res}\l(\frac{x^s}{\zeta(s)}\frac{\Gamma(s)}{\Gamma(1+\tau+s)}\r) 
 + O\l( \frac{x^{2}}{\tau {T_*}^{\tau}} + \frac{x^2{T_*}^{-1-\tau+\varepsilon}}{\log{x}} \r).
\end{align}
By taking a sequence $\{T_\nu\}$ which satisfies the condition in Lemma \ref{zlb} and $T_\nu \rightarrow \infty$, we find
\begin{align*}
M_\tau(x) =& 
\lim_{T_\nu \rightarrow \infty}\sum_{|\gamma| < T_{\nu}}\frac{1}{(m(\rho)-1)!}\lim_{s \rightarrow \rho}\frac{d^{m(\rho)-1}}{ds^{m(\rho)-1}}
			\left((s-\rho)^{m(\rho)}\frac{x^{s}}{\zeta(s)}\frac{\Gamma(s)}{\Gamma(1+\tau+s)}\right)\\
&+\sum_{l=0}^{\infty}\underset{s=-l}{\rm Res}\l(\frac{x^s}{\zeta(s)}\frac{\Gamma(s)}{\Gamma(1+\tau+s)}\r),
\end{align*}
which completes the proof of Theorem \ref{exfoM}.
\end{proof}

\begin{proof}[Proof of Theorem \ref{ASZC}]

We assume the Riemann Hypothesis.

Let $\sigma_0 = \frac{1}{2} + \frac{1}{\log{x}}$ and $\tau \geq \frac{2C\log{\log{T}}}{\log{\log{\log{\log{T}}}}} + 1$,
where $C$ satisfies the condition of Lemma \ref{zlbuRH}.
By Lemma \ref{GRF}, we have
\begin{align*}
M_\tau(x) 	&= \frac{1}{2\pi i}\int_{1+\frac{1}{\log{x}} -iT_*}^{1+\frac{1}{\log{x}} + iT_*}\frac{x^s}{\zeta(s)}\frac{\Gamma(s)}{\Gamma(1+\tau+s)}ds 
				+ O\l(\frac{x\log{x}}{\tau {T_*}^{\tau}} \r),
\end{align*}
where $T_* \in \left[T, T+T^{3/4}\right]$ satisfies the condition of Lemma \ref{zlb}. By the Cauchy theorem and the Riemann Hypothesis, we find that
\begin{align*}
&\frac{1}{2\pi i}\int_{1+1/\log{x} -iT_*}^{1+1/\log{x} + iT_*}\frac{x^s}{\zeta(s)}\frac{\Gamma(s)}{\Gamma(1+\tau+s)}ds\\
&= \frac{1}{2\pi i}\l(\int_{\sigma_0+iT_*}^{1+1/\log{x} + iT_*}+\int_{\sigma_0-iT_*}^{\sigma_0 + iT_*}
+\int_{1+1/\log{x}-iT_*}^{\sigma_0-iT_*}\r)\frac{x^s}{\zeta(s)}\frac{\Gamma(s)}{\Gamma(1+\tau+s)}ds\\
&=: L_1 + L_2 + L_3, 
\end{align*}
say. Using Lemma \ref{zlb} and inequality (\ref{St1}), we have
\begin{align*}
|L_1| = \left|\int_{\sigma_0+iT_*}^{1+1/\log{x} + iT_*}\frac{x^s}{\zeta(s)}\frac{\Gamma(s)}{\Gamma(1+\tau+s)}ds\right|
\ll x{T_*}^{\varepsilon-\tau}.
\end{align*}
Similarly, we have
\begin{align*}
|L_3| = \left|\int_{\sigma_0-iT_*}^{1+1/\log{x} - iT_*}\frac{x^s}{\zeta(s)}\frac{\Gamma(s)}{\Gamma(1+\tau+s)}ds\right|
\ll xT^{\varepsilon-\tau}.
\end{align*}
We split the following integral into three parts:
\begin{align*}
L_2 &= \left(\int_{|t| \leq 14} + \int_{14 < |t| \leq \log{\log{T}}} + \int_{\log{\log{T}} < |t| \leq T_*}\right)
\frac{x^{\sigma_0+it}}{\zeta(\sigma_0+it)}
\frac{\Gamma(\sigma_0+it)}{\Gamma(1+\tau+\sigma_0+it)}dt.
\end{align*}
We observe
\begin{align*}
\left|\int_{|t| \leq 14}\frac{x^{\sigma_0+it}}{\zeta(\sigma_0(T_*)+it)}\frac{\Gamma(\sigma_0(T_*)+it)}{\Gamma(1+\tau+\sigma_0(T_*)+it)}dt\right|
\ll x^{1/2}
\end{align*}
since the function $\d{\frac{\Gamma(s)}{\zeta(s)\Gamma(1+\tau+s)}}$ is continuous in the region $1/2 \leq \sigma$, $|t| \leq 14$ 
under the Riemann Hypothesis.
By Lemma \ref{zlbuRH} and the Stirling formula, we have
\begin{align*}
&\int_{14 < |t| \leq \log{\log{T}}}\left|\frac{x^{\sigma_0+it}}{\zeta(\sigma_0+it)}
\frac{\Gamma(\sigma_0+it)}{\Gamma(1+\tau+\sigma_0+it)}\right|dt\\
&\ll x^{1/2}\int_{14 < t < \log{\log{T}}}\exp\left( \frac{C\log{t}}{\log{\log{t}}}\log\left( \frac{e\log{x}}{\log{\log{t}}} \right) \right)\tau^{-\tau-1}dt\\
&\ll x^{1/2}\log{\log{T}} \exp\left( \frac{C\log{\log{x}}\log{\log{\log{T}}}}{\log{\log{\log{\log{T}}}}} - 
\frac{2C\log{\log{T}}\log{\log{\log{T}}}}{\log{\log{\log{\log{T}}}}} \right)\\
&\ll x^{1/2},
\end{align*}
if $T \geq x$ and 
\begin{align*}
&\int_{\log{\log{T}} < |t| \leq T_*}\left|\frac{x^{\sigma_0+it}}{\zeta(\sigma_0+it)}
\frac{\Gamma(\sigma_0+it)}{\Gamma(1+\tau+\sigma_0+it)}\right|dt\\
&\ll x^{1/2}\int_{\log{\log{T}} < t < 2T}
\exp\left( \frac{C\log{t}}{\log{\log{t}}}\log\left( \frac{e\log{2T}}{\log{\log{t}}} \right) \right)t^{-\tau-1}dt\\
&\ll x^{1/2}\int_{\log{\log{T}} < t < 2T}
\exp\left(\log{t}(\log{\log{T}})\left(\frac{C}{\log{\log{t}}} - \frac{2C}{\log{\log{\log{\log{T}}}}} \right) \right)\frac{dt}{t^2}\\
&\ll x^{1/2}.
\end{align*}
Therefore, we obtain
\begin{align*}
M_{\tau}(x) \ll x^{1/2} + \frac{x\log{x}}{\tau T^{\tau}} + xT^{\varepsilon-\tau}.
\end{align*}
Letting $T = x$, we have
\begin{align*}
M_{\tau}(x) \ll x^{1/2}
\end{align*}
for any $\tau \geq \frac{C_0\log{\log{x}}}{\log{\log{\log{\log{x}}}}}$.
\end{proof}


			\section{\bf Convergence of the residues series on negative integers} 


In this section we supply a proof of a fact, which was remained pending in the preceding section.
That is, we show that
\begin{align*}
\sum_{l=1}^{\infty}\underset{s = -l}{\rm Res}\left( \frac{x^{s}}{\zeta(s)}\frac{\Gamma(s)}{\Gamma(s+\tau+1)} \right)
\end{align*}
is absolutely and uniformly convergent.
First, we show the convergence in the case of non-integer $\tau$. 
When $l$ is an odd integer, we have
\begin{align*}
\underset{s=-2n+1}{\rm Res}\l(\frac{x^s}{\zeta(s)}\frac{\Gamma(s)}{\Gamma(1+\tau+s)}\r)
=-\frac{x^{-2n+1}}{\zeta(-2n+1)\Gamma(-2n+2+\tau)(2n-1)!},
\end{align*}
and
\begin{gather*}
\zeta(-2n+1)  		  = 2(-1)^n(2\pi)^{-2n}(2n-1)!\zeta(2n),\\
\Gamma(-2n+2+\tau) 	= \frac{1}{\sin(\pi \tau)\Gamma(2n-1-\tau)}.
\end{gather*}
Hence we have 
\begin{align*}
\sum_{n=1}^{\infty}\underset{s=-2n+1}{\rm Res}\l(\frac{x^s}{\zeta(s)}\frac{\Gamma(s)}{\Gamma(1+\tau+s)}\r) \ll x^{-1}
\end{align*}
and this series is absolutely and uniformly convergent.
When $n$ is even, we have
\begin{align*}
&\underset{s=-2n}{\rm Res}\l(\frac{x^s}{\zeta(s)}\frac{\Gamma(s)}{\Gamma(1+\tau+s)}\r)\\
&=\lim_{s \rightarrow -2n}\frac{d}{ds}\l( (s+2n)^2 \frac{x^s}{\zeta(s)}\frac{\Gamma(s)}{\Gamma(1+\tau+s)} \r)\\
&=\lim_{s \rightarrow -2n}\frac{(s+2n)x^s\Gamma(s)}{\zeta(s)\Gamma(1 + \tau + s)}\l( 2 - (s+2n)\frac{\zeta'(s)}{\zeta(s)} 
+(s+2n)\frac{\Gamma'(s)}{\Gamma(s)} \r)\\
& + \frac{x^{-2n}}{\zeta'(-2n)\Gamma(1+\tau-2n)(2n)!}\l(\log{x} - \frac{\Gamma'(1+\tau-2n)}{\Gamma(1+\tau-2n)} \r)\\
&= \frac{x^{-2n}}{\zeta'(-2n)\Gamma(1+\tau-2n)(2n)!}
\l( \log{x} - \frac{\zeta''(-2n)}{2\zeta'(-2n)} + c_{2n}(2n)! - \frac{\Gamma'(1+\tau-2n)}{\Gamma(1+\tau-2n)} \r),
\end{align*}
where $c_{2n}$ is the 0-th coefficient of the Laurent expansion of the gamma-function at $-2n$. 
Now, we find that
\begin{gather}
\zeta'(-2n)  = (-1)^n(2\pi)^{-2n}(2n)!\zeta(2n+1)/2, \label{zeta'}\\
\zeta''(-2n) = -2\zeta'(-2n)\left( \l(\frac{\pi}{2}\r)^2+\log{2\pi}-\frac{\Gamma'(2n+1)}{\Gamma(2n+1)}-\frac{\zeta'(2n+1)}{\zeta(2n+1)} \right) \label{zeta''}
\end{gather}
from 
$\d{\zeta(s) = \chi(s)\zeta(1-s)}$ and  
$\d{\zeta''(-2n) = 2\lim_{s \rightarrow -2n}\frac{(s+2n)\zeta'(s)-\zeta(s)}{(s+2n)^2}}$.\\
By (\ref{zeta'}) and (\ref{zeta''}), we have
\begin{align*}
\frac{\zeta''(-2n)}{\zeta'(-2n)}&=-2\l( \l(\frac{\pi}{2}\r)^2+\log{2\pi}-\frac{\Gamma'(2n+1)}{\Gamma(2n+1)}-\frac{\zeta'(2n+1)}{\zeta(2n+1)} \r).
\end{align*}
Using the functional equation $\Gamma(s)\Gamma(1-s) = \pi / \sin \pi s$, we find that
\begin{align*}
\frac{\Gamma'(1+\tau-2n)}{\Gamma(1+\tau-2n)}
= \frac{\Gamma(2n-\tau)}{\tan\pi(1+\tau)}-\Gamma'(2n-\tau).
\end{align*}
On the other hand, we have $c_{2n} = 0$ by 
\begin{align*}
c_{2n} = \lim_{s \rightarrow -2n}\l( \Gamma(s) - \frac{1}{(2n)!(s+2n)} \r) 
\quad \text{and} \quad
\Gamma(s)\Gamma(1-s) = \pi / \sin \pi s.
\end{align*}
Hence we obtain 
\begin{align*}
&\sum_{n=1}^{\infty}\underset{s=-2n}{\rm Res}\l(\frac{x^s}{\zeta(s)}\frac{\Gamma(s)}{\Gamma(1+\tau+s)}\r) 
\ll x^{-2}\log{x},
\end{align*}
and this series is absolutely and uniformly convergent.

Similarly, when $\tau = k$ is a positive integer, we obtain
\begin{align}	\label{Resint}
&\sum_{l=0}^{\infty}\underset{s=-l}{\rm Res}\l(\frac{x^s}{\zeta(s)}\frac{\Gamma(s)}{\Gamma(1+\tau+s)}\r)\\  \nonumber
&=\sum_{l > \frac{k}{2}}\frac{2(-1)^{l+k+1}(2l-k-1)!}{(2l!)^2\zeta(2l+1)}\l( \frac{x}{2\pi} \r)^{-2l}-\frac{2}{k!}\\ \nonumber
& + \sum_{0< l \leq \frac{k}{2}}\l\{ \frac{2\log{x}(x/2\pi)^{-2l}}{(2l!)^2(k-2l)!\zeta(2l+1)} 
		-\frac{2\zeta''(-2l)(x/4\pi^2)^{-2l}}{(2l!)^3(k-2l)!\l(\zeta(2l+1)\r)^2} \r\}\\ \nonumber
& -\sum_{0< l \leq \frac{k+1}{2}}\frac{x^{-2l+1}}{\zeta(-2l+1)(2l-1)!(k-2l+1)!}.
\end{align}
Therefore, the series
\begin{align*}
\sum_{l=1}^{\infty}\underset{s=-l}{\rm Res}\l(\frac{x^s}{\zeta(s)}\frac{\Gamma(s)}{\Gamma(1+\tau+s)}\r) \ll x^{-1}
\end{align*}
is absolutely and uniformly convergent for $\tau > 0$.


					\section{\bf Proofs of Corollaries \ref{dzb}, \ref{AWMH} and \ref{MM}} 


\begin{proof}[Proof of Corollary \ref{dzb}]

First, we show this corollary in the case $\tau \gg 1$.
Using equation (\ref{subexfoM}), we find that
\begin{align*}
M_\tau(x) = &
\sum_{|\gamma| < T_*}\frac{x^{i\gamma}}{\zeta'(\rho)}\frac{\Gamma(\rho)}{\Gamma(1+\tau+\rho)}x^{1/2}
+\sum_{l=0}^{\infty}\underset{s=-l}{\rm Res}\l(\frac{x^s}{\zeta(s)}\frac{\Gamma(s)}{\Gamma(1+\tau+s)}\r)\\
&+ O\l(  \frac{x^{2}}{\tau {T_*}^{\tau}} + \frac{x^2{T_*}^{-1-\tau+\varepsilon}}{\log{x}} \r)
\end{align*}
for any $x > 0$. Let $T = x^{3/\tau}$. Now, we have
\begin{align*}
\sum_{|\gamma| < T}\frac{x^{i\gamma}}{\zeta'(\rho)}\frac{\Gamma(\rho)}{\Gamma(1+\tau+\rho)}
&\leq \left(\sum_{0 < \gamma < T}\frac{1}{\gamma^{1+\tau}|\zeta'(\rho)|^2}\right)^{1/2}\left(\sum_{0 < \gamma < T}\frac{1}{\gamma^{1+\tau}}\right)^{1/2}\\
&\ll 1
\end{align*}
for any $T > 0$ and $\tau \gg 1$ by assumption $J_{-1}(T) \ll T$.
Hence, for $\tau \gg 1$, we have
\begin{align*}
M_{\tau}(x) \ll x^{1/2}.
\end{align*}

Next, we show this corollary in the case $\tau = o(1)$.
Let $x > 2$ be a half integer. Then, by Lemma \ref{GRF}, we have
\begin{align*}
M_\tau(x) 	&= \frac{1}{2\pi i}\int_{2 -iT_*}^{2 + iT_*}\frac{x^s}{\zeta(s)}\frac{\Gamma(s)}{\Gamma(1+\tau+s)}ds 
				+ O\l(\frac{x^{3}}{{T_*}^{1+\tau}} \r).
\end{align*}
From this equation and equation (\ref{subexfoM}), we have
\begin{align*}
M_\tau(x) = &
\sum_{|\gamma| < T_*}\frac{x^{i\gamma}}{\zeta'(\rho)}\frac{\Gamma(\rho)}{\Gamma(1+\tau+\rho)}x^{1/2}
+\sum_{l=0}^{\infty}\underset{s=-l}{\rm Res}\l(\frac{x^s}{\zeta(s)}\frac{\Gamma(s)}{\Gamma(1+\tau+s)}\r)\\
&+ O\l(\frac{x^3}{{T_*}^{\tau+1}} + \frac{x^2{T_*}^{-1-\tau+\varepsilon}}{\log{x}} \r).
\end{align*}
Let $T = x^3$. Then
\begin{align*}
M_\tau(x) = &
\sum_{|\gamma| < T_*}\frac{x^{i\gamma}}{\zeta'(\rho)}\frac{\Gamma(\rho)}{\Gamma(1+\tau+\rho)}x^{1/2} + O(1)
\end{align*}
holds. 
Thanks to the Cauchy-Schwarz inequality, for any sufficiently large $T > 0$, we have
\begin{align}	\label{cor2IE1}
&\sum_{|\gamma| < T}\frac{x^{i\gamma}}{\zeta'(\rho)}\frac{\Gamma(\rho)}{\Gamma(1+\tau+\rho)}
\leq \left(\sum_{0 < \gamma < T}\frac{1}{\gamma^{1+\tau}|\zeta'(\rho)|^2}\right)^{1/2}\left(\sum_{0 < \gamma < T}\frac{1}{\gamma^{1+\tau}}\right)^{1/2}.
\end{align}
Using the assumption $J_{-1}(T) \ll T$ and Lemma \ref{gzdbz}, we obtain that
\begin{align*}
\sum_{0 < \gamma < T}\frac{1}{\gamma^{1+\tau}|\zeta'(\rho)|^2}
&= \frac{J_{-1}(T)}{T^{1+\tau}} + (1+\tau)\int_{1}^{T}\frac{J_{-1}(u)}{u^{2+\tau}}du\\
&\ll \frac{1}{T^{\tau}} + (1+\tau)\int_{1}^{T}\frac{du}{u^{1+\tau}}
\ll \frac{1}{T^{\tau}} + \frac{1+\tau}{\tau}\left( 1 - \frac{1}{T^{\tau}} \right), 
\end{align*}
and that
\begin{align*}
&\sum_{0 < \gamma < T}\frac{1}{\gamma^{1+\tau}}
= \frac{N(T)}{T^{1+\tau}} + (1+\tau)\int_{1}^{T}\frac{N(u)}{u^{2+\tau}}du\\
&= \frac{\log{T}}{2\pi T^{\tau}} + \frac{1+\tau}{2\pi}\int_{1}^{T}\frac{\log{u}}{u^{1+\tau}}du
 + \frac{1+\log{2\pi}}{2\pi}\int_{1}^{T}\frac{du}{u^{1+\tau}} + O(1)\\
&= \frac{\log{T}}{2\pi T^{\tau}} - \frac{1+\tau}{2\pi \tau T^{\tau}}\log{T} + \frac{1+\tau}{2\pi \tau^2}\left( 1 - \frac{1}{T^{\tau}} \right)
 + \frac{1+\log{2\pi}}{2\pi \tau}\left( 1 - \frac{1}{T^{\tau}} \right) + O(1).
\end{align*}
Hence we have
\begin{align}	\label{cor2IE4}
\sum_{0 < \gamma < T}\frac{1}{\gamma^{1+\tau}|\zeta'(\rho)|^2}
\ll \l\{ 
\begin{array}{ll}
	1			& \text{if \; $1 \ll \tau$}, \\
	\tau^{-1}	 	& \text{if \; $(\log{T})^{-1} \ll \tau = o(1)$}, \\
	\log{T} 		& \text{if \; $0 \leq \tau = o\l((\log{T})^{-1}\r)$},
\end{array}
\r.
\end{align}
and
\begin{align}	\label{cor2IE5}
\sum_{0 < \gamma < T}\frac{1}{\gamma^{1+\tau}}
\ll \l\{
\begin{array}{ll}
	1			& \text{if \; $1 \ll \tau$}, \\
	\tau^{-2} 		& \text{if \; $(\log{T})^{-1} \ll \tau = o(1)$}, \\
	(\log{T})^2 	& \text{if \; $0 \leq \tau = o\l((\log{T})^{-1}\r)$}.
\end{array}
\r.
\end{align}
Therefore, we obtain
\begin{align*}
\sum_{|\gamma| < T}\frac{x^{i\gamma}}{\zeta'(\rho)}\frac{\Gamma(\rho)}{\Gamma(1+\tau+\rho)} 
\ll \l\{
\begin{array}{ll}
	1				& \text{if \; $1 \ll \tau$}, \\
	\tau^{-3/2}		   	& \text{if \; $(\log{x})^{-1} \ll \tau = o(1)$}, \\
	(\log{x})^{3/2} 		& \text{if \; $0 \leq \tau = o\l((\log{x})^{-1}\r)$}
\end{array}
\r.
\end{align*}
for any half integer $x > 0$ by (\ref{cor2IE1}), (\ref{cor2IE4}) and (\ref{cor2IE5}).
Now, let $x$ be any positive number and $x_0$ be a half integer with $x-1 < x_0 \leq x$.

Using the Taylor expansion 
$\d{
(1+x)^{\alpha} 
= \sum_{n=0}^{\infty}
				\begin{pmatrix}
					\alpha \\
					n
				\end{pmatrix}
x^{n}},
$
we have the inequality
\begin{align*}
\left( 1 - \frac{n}{x} \right)^{\tau} - \left( 1 - \frac{n}{x_0} \right)^{\tau} \ll \frac{n2^{\tau}}{x^2}.
\end{align*}
Thus we find that
\begin{align*}
\frac{1}{\Gamma(\tau+1)}\sum_{n \leq x}\mu(n)\left( \left( 1 - \frac{n}{x} \right)^{\tau} - \left( 1 - \frac{n}{x_0} \right)^{\tau} \right)
\ll 2^{\tau}.
\end{align*}
Hence we obtain Corollary \ref{dzb} when $2^{\tau} \ll 1$.
\end{proof}

\begin{proof}[Proof of Corollary \ref{AWMH}]

We assume the weak Mertens Hypothesis. 
It is known (see \cite[Section 14.28]{T}) that the weak Mertens Hypothesis implies the Riemann Hypothesis, (SZC) and 
\begin{align*}
\sum_{|\gamma| \leq T}\frac{1}{|\rho\zeta'(\rho)|^2} \ll 1.
\end{align*}
For any $\tau \geq 1/2 + \varepsilon$, we have
\begin{align*}
&\sum_{|\gamma| \leq T}\left|\frac{x^{i\gamma}}{\zeta'(\rho)}\frac{\Gamma(\rho)}{\Gamma(\rho + \tau + 1)}\right|
\ll \sum_{|\gamma| \leq T}\frac{1}{|\zeta'(\rho)\rho^{1+\tau}|}\\
&\leq \l(\sum_{|\gamma| \leq T}\frac{1}{|\rho\zeta'(\rho)|^2} \r)^{1/2}\l( \sum_{|\gamma| \leq T}\frac{1}{|\rho|^{2\tau}} \r)^{1/2}
\ll_{\varepsilon} 1.
\end{align*}
Hence the series
\begin{align*}
\sum_{\gamma}\frac{x^{i\gamma}}{\zeta'(\rho)}\frac{\Gamma(\rho)}{\Gamma(\rho + \tau + 1)}
\end{align*}
is absolutely and uniformly convergent with respect to $x \in (0, \infty)$.

Therefore, we obtain Corollary \ref{AWMH} by Theorem \ref{exfoM}.
\end{proof}

\begin{proof}[Proof of Corollary \ref{MM}]

Assume the weak Mertens Hypothesis and let $\kappa$ be a real number. Then we have
\begin{align*}
\int_{1}^{x}\frac{M(u)}{u^{\kappa}}du 
&= \left[ \frac{uM_1(u)}{u^{\kappa}} \right]_{1}^{x} + \kappa\int_{1}^{x}\frac{M_{1}(u)}{u^{\kappa}}du\\
&= \kappa\int_{1}^{x}\frac{M_{1}(u)}{u^{\kappa}}du + x^{1-\kappa}M_1(x)
\end{align*}
since $\int_{0}^{x}M(u)du = xM_{1}(x)$.
When $\kappa > 1$, we obtain
\begin{align*}
&\int_{1}^{x}\frac{M_{1}(u)}{u^{\kappa}}du
=\int_{1}^{x}\sum_{\gamma}\frac{u^{\rho-\kappa}}{\zeta'(\rho)}\frac{\Gamma(\rho)}{\Gamma(\rho+2)}du
+\int_{1}^{x}\sum_{l=0}^{\infty}\underset{s=-l}{\rm Res}\l(\frac{u^s}{\zeta(s)}\frac{\Gamma(s)}{\Gamma(s + 2)}\r)\frac{du}{u^{\kappa}}\\
&= \sum_{\gamma}\frac{x^{\rho-\kappa+1}-1}{\zeta'(\rho)(\rho - \kappa + 1)\rho(\rho+1)}
+\int_{1}^{\infty}\sum_{l=0}^{\infty}\underset{s=-l}{\rm Res}\l(\frac{u^s}{\zeta(s)}\frac{\Gamma(s)}{\Gamma(s + 2)}\r)\frac{du}{u^{\kappa}}
+O\l( x^{1-\kappa} \r)
\end{align*}
by Corollary \ref{AWMH}. Hence we have 
\begin{align*}
\int_{1}^{x}\frac{M(u)}{u^{\kappa}}du
= \sum_{\gamma}\frac{x^{\rho-\kappa+1}}{\zeta'(\rho)\rho(\rho-\kappa+1)} + A(\kappa) + O\l(x^{1-\kappa}\r),
\end{align*}
where 
\begin{align*}
A(\kappa) = \kappa \int_{1}^{\infty}\sum_{l=0}^{\infty}\underset{s=-l}{\rm Res}\l(\frac{u^s}{\zeta(s)}\frac{\Gamma(s)}{\Gamma(s + 2)}\r)
\frac{du}{u^{\kappa}} - \kappa\sum_{\gamma}\frac{1}{\zeta'(\rho)(\rho-\kappa+1)\rho(\rho+1)}.
\end{align*}
Moreover, we find that
\begin{align*}
A(\kappa) = 
&\frac{10\kappa - 12}{\kappa-1} + \kappa \sum_{l = 1}^{\infty}\frac{2(-1)^{l}(2l-2)!(2\pi)^{2l}}{\{(2l)!\}^2
\left(2l + \kappa - 1 \right) \zeta(2l+1)}\\
&-\kappa\sum_{\gamma}\frac{1}{\zeta'(\rho)\rho(\rho+1)(\rho-\kappa+1)}
\end{align*}
by calculating the equation (\ref{Resint}) in the case $k = 1$.

The case $\kappa \leq 1$ is similar.
\end{proof}




					\section{\bf Proofs of Theorems \ref{divIM} and \ref{IM}} 


We shall prove the following lemmas.
Theorem \ref{divIM} is immediate consequence of these Lemma \ref{divIMLem1} and Lemma \ref{divIMLem2}.

\begin{lemma} \label{divIMLem1}
Let $\Theta$ denote the supremum of real parts of the non-trivial zeros of the Riemann zeta-function and $\kappa \leq 3/2$.
Then we have
\begin{align*}
\int_{1}^{x}\frac{M(u)}{u^{\kappa}}du - \frac{2}{\zeta(1/2)} = \Omega_{\pm}\left(x^{\Theta - \kappa + 1 - \varepsilon}\right)
\end{align*}
for any fixed number $\varepsilon > 0$, where the term $2 / \zeta(1/2)$ is missing unless $\kappa = 3/2$.
\end{lemma}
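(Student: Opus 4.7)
My plan is to extend the Landau--Ingham sign-controlled Mellin integral of Lemma \ref{lemSZC} to the integrated quantity $F(x):=\int_{1}^{x}M(u)u^{-\kappa}\,du$. Set $C:=2/\zeta(1/2)$ if $\kappa=3/2$ and $C:=0$ otherwise. By Fubini together with $\int_{1}^{\infty}M(u)u^{-w-1}\,du=1/(w\zeta(w))$, for $\mathrm{Re}(s)$ sufficiently large one has
\begin{align*}
G(s):=\int_{1}^{\infty}\frac{F(x)-C}{x^{s+1}}\,dx=\frac{1}{s(s+\kappa-1)\zeta(s+\kappa-1)}-\frac{C}{s},
\end{align*}
which continues meromorphically with poles at each $s=\rho-\kappa+1$ ($\rho$ a non-trivial zero of $\zeta$), at $s=1-\kappa$, and at $s=0$; the last is exactly cancelled by $-C/s$ precisely when $\kappa=3/2$. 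Fix $\varepsilon>0$ and choose a non-trivial zero $\rho_{0}=\beta_{0}+i\gamma_{0}$ with $\beta_{0}>\Theta-\varepsilon/2$ and $\beta_{0}$ maximal among real parts of the finitely many zeros at the heights $\pm\gamma_{0}$. Since $\beta_{0}\in[1/2,1]$ and $\kappa\leq 3/2$, the point $\sigma_{*}:=\beta_{0}-\kappa+1$ satisfies $\sigma_{*}\geq 0$ and $\sigma_{*}>1-\kappa$.

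For the $\Omega_{+}$-claim, assume for contradiction that $F(x)-C\leq Kx^{\alpha}$ for $x\geq x_{0}$ with some $K,x_{0}>0$ and $\alpha<\Theta-\kappa+1-\varepsilon$. For a real parameter $\phi$ to be chosen, consider
\begin{align*}
\widetilde I(s):=\int_{x_{0}}^{\infty}\frac{Kx^{\alpha}-(F(x)-C)}{x^{s+1}}\bigl(1+\cos(\phi-\gamma_{0}\log x)\bigr)\,dx,
\end{align*}
whose integrand is non-negative, so $\widetilde I(\sigma)\geq 0$ at every real $\sigma$ of convergence. Expanding $\cos(\phi-\gamma_{0}\log x)=\tfrac12(e^{i\phi}x^{-i\gamma_{0}}+e^{-i\phi}x^{i\gamma_{0}})$ and integrating termwise for $\mathrm{Re}(s)$ large expresses $\widetilde I(s)$ as an explicit combination of rational functions in $s$, of $G(s)$, and of the two shifts $G(s\pm i\gamma_{0})$, plus an entire contribution coming from $\int_{1}^{x_{0}}$.

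The crucial point is that at the real value $\sigma_{*}$, both $G(s+i\gamma_{0})$ and $G(s-i\gamma_{0})$ inherit a pole of order $m(\rho_{0})$ from the zeros $\rho_{0}$ and $\overline{\rho_{0}}$ respectively, with complex-conjugate leading Laurent coefficients. In the simple-zero case this yields the principal-part expansion
\begin{align*}
\widetilde I(\sigma)=-\frac{1}{\sigma-\sigma_{*}}\,\mathrm{Re}\!\left(\frac{e^{i\phi}}{\rho_{0}(\rho_{0}-\kappa+1)\zeta'(\rho_{0})}\right)+O(1)\quad(\sigma\to\sigma_{*}^{+}).
\end{align*}
The maximality of $\rho_{0}$, combined with $\sigma_{*}>\max\{\alpha,\,0,\,1-\kappa\}$, ensures that $\sigma_{*}$ is strictly to the right of every other real singularity of the right-hand side, so by Landau's theorem $\widetilde I(\sigma)$ converges on all of $(\sigma_{*},\infty)$ and agrees there with its meromorphic continuation. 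Choosing $\phi=\arg(\rho_{0}(\rho_{0}-\kappa+1)\zeta'(\rho_{0}))$ makes the displayed real part strictly positive, forcing $\widetilde I(\sigma)\to-\infty$ as $\sigma\to\sigma_{*}^{+}$ and contradicting $\widetilde I(\sigma)\geq 0$. The $\Omega_{-}$-assertion follows identically with $Kx^{\alpha}+(F(x)-C)$ in place of $Kx^{\alpha}-(F(x)-C)$ and a complementary $\phi$; a multiple zero $\rho_{0}$ is handled by replacing the simple pole at $\sigma_{*}$ by one of order $m(\rho_{0})$ whose leading coefficient is a non-zero complex multiple of $e^{i\phi}/\zeta^{(m(\rho_{0}))}(\rho_{0})$, again killable by the choice of $\phi$.

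The main obstacle is guaranteeing that $\sigma_{*}$ is the rightmost real singularity of $\widetilde I$: this is precisely why the constant $C=2/\zeta(1/2)$ must be subtracted exactly in the critical case $\kappa=3/2$ (otherwise the trivial pole at $s=0$ would coincide with, or lie to the right of, $\sigma_{*}=\beta_{0}-1/2$), and why $\rho_{0}$ is chosen so that no zero at height $\pm\gamma_{0}$ has larger real part. Everything else is routine: absolute convergence of $\widetilde I(\sigma)$ for $\sigma$ large follows from the trivial bound $|M(u)|\leq u$, which gives $|F(x)-C|=O(x^{2-\kappa})$, and the termwise expansion of the cosine is harmless because the contributing integrals are absolutely convergent on the starting half-plane.
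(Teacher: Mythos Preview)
Your argument is correct, but it is considerably more elaborate than the paper's. For Lemma~\ref{divIMLem1} the paper avoids the cosine kernel altogether: assuming $H_\kappa(x)<x^{\Theta-\kappa+1-\varepsilon_0}$ for large $x$, it forms the non-negative Mellin integral
\[
\int_{1}^{\infty}\frac{x^{\Theta-\kappa+1-\varepsilon_0}-H_\kappa(x)}{x^{s-\kappa+2}}\,dx
=\frac{1}{s-\Theta+\varepsilon_0}-\frac{1}{s(s-\kappa+1)\zeta(s)}
\]
and observes that the only \emph{real} singularities of the right-hand side lie at $\Theta-\varepsilon_0$, $0$ and $\kappa-1$, all $\le\Theta-\varepsilon_0$. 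Landau's theorem then forces the integral to converge, hence to be analytic, throughout the half-plane $\sigma>\Theta-\varepsilon_0$; but this contradicts the existence of any zero of $\zeta$ with real part $>\Theta-\varepsilon_0$. No specific zero is singled out and no oscillatory factor is needed.

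What you do instead is pick a zero $\rho_0$ near $\sigma=\Theta$ and use the Ingham kernel $1+\cos(\phi-\gamma_0\log x)$ to rotate the complex pole of $G$ at $\rho_0-\kappa+1$ onto the real axis at $\sigma_*$. This is exactly the device the paper reserves for Lemma~\ref{divIMLem2}, where the goal is the sharper quantitative bound $\limsup H_\kappa(x)/x^{3/2-\kappa}\ge |\rho(\rho-\kappa+1)\zeta'(\rho)|^{-1}$. For the bare $\Omega_\pm(x^{\Theta-\kappa+1-\varepsilon})$ statement your machinery is overkill, though it does have the advantage of giving Lemma~\ref{divIMLem2} essentially for free. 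One small wrinkle: your claim $\sigma_*>\max\{\alpha,0,1-\kappa\}$ fails when $\kappa=3/2$ and $\beta_0=1/2$ (so $\sigma_*=0$), but you already noted that the pole of $G$ at $s=0$ is then cancelled by $-C/s$, so the argument survives; just replace $\sigma_*>0$ by ``$\sigma_*\ge0$, with the pole at $0$ removed when equality holds''.
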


\begin{proof}

We denote
\begin{align} \label{H(x)}
H_{\kappa}(x) := \int_{1}^{x}\frac{M(u)}{u^{\kappa}}du - \frac{2}{\zeta(1/2)}, 
\end{align}
where the term $2/\zeta(1/2)$ is missing unless $\kappa = 3/2$.
First we show the case $\kappa < 3/2$.
Now, we suppose that there exists a number $0 < \varepsilon_0 < 3/2 - \kappa$ such that
\begin{align}	\label{assumption1}
H_{\kappa}(x) < x^{\Theta - \kappa + 1 - \varepsilon_0} \qquad (x > K(\varepsilon_0)).
\end{align}
We have
\begin{align*}
&\int_{1}^{\infty}\frac{x^{\Theta - \kappa + 1 - \varepsilon_0} - H_{\kappa}(x)}{x^{s-\kappa + 2}}dx\\
&= \frac{1}{s - \Theta + \varepsilon_0} 
+ \left[ \frac{1}{s - \kappa - 1}\frac{H_{\kappa}(x)}{x^{s-\kappa+1}} \right]_{1}^{\infty} 
- \frac{1}{s-\kappa+1}\int_{1}^{\infty}\frac{M(x)}{x^{s+1}}dx\\
&= \frac{1}{s - \Theta + \varepsilon_0} - \frac{1}{s(s-\kappa+1)\zeta(s)}
\end{align*}
for any $\sigma > 1$. Notice that the first integral term is absolutely convergent for $\sigma > \Theta - \varepsilon_0$ since
the last equation is definite value when $s = \sigma > \Theta - \varepsilon_0$, and we assume (\ref{assumption1}).
Hence, for $\sigma > \Theta - \varepsilon_0$, we have
\begin{align*}
\frac{1}{s(s-\kappa+1)\zeta(s)}
= \int_{1}^{\infty}\frac{H_{\kappa}(x) - x^{\Theta - \kappa + 1 - \varepsilon_0}}{x^{s-\kappa + 2}}dx
+ \frac{1}{s - \Theta + \varepsilon_0}.
\end{align*}
Now, we find that $\d{\frac{1}{s(s-\kappa+1)\zeta(s)}}$ is regular for $\sigma > \Theta - \varepsilon_0$ since
the right hand side is regular for $\sigma > \Theta - \varepsilon_0$ by assumption (\ref{assumption1}).
However, this is a contradiction with the definition $\Theta$.
Hence we have $H_{\kappa}(x) = \Omega_{+}(x^{\Theta - \kappa + 1 - \varepsilon})$ for any $\varepsilon > 0$. 
Similarly, we have $H_{\kappa}(x) = \Omega_{-}(x^{\Theta - \kappa + 1 - \varepsilon})$.

Next, we consider the case $\kappa = 3/2$. 
We suppose that there exists a number $0 < \varepsilon_0 < 1/4$ such that
\begin{align*}
H_{3/2}(x) < x^{\Theta - 1/2 - \varepsilon_0} \qquad (x > K(\varepsilon_0)).
\end{align*}
Then we have
\begin{align*}
\int_{1}^{\infty}\frac{x^{\Theta - 1/2 +\e_{0}} - H_{3/2}(x)}{x^{s+1/2}}dx
=\frac{1}{s - \Theta +\e_{0}} - \l(\frac{1}{s(s-1/2)\zeta(s)} -  \frac{2}{(s-1/2)\zeta(1/2)} \r).
\end{align*}
Hence we have the $\Omega$-result
\begin{align*}
H_{3/2}(x) = \Omega_{\pm}(x^{\Theta-1/2-\e})
\end{align*}
by considered in the same manner as in the case $\kappa < 3/2$.
\end{proof}

\begin{lemma} \label{divIMLem2}

We assume the Riemann Hypothesis.
Let $\rho = 1/2 + i\gamma$ be a non-trivial zero of the Riemann zeta-function and $\tau \leq 3/2$. Then we have
\begin{align*}
\limsup_{x \rightarrow \infty}\frac{1}{x^{3/2-\kappa}}\int_{1}^{x}\frac{M(u)}{u^{\kappa}}du - \frac{2}{\zeta(1/2)}
&\geq \frac{1}{|\rho(\rho-\kappa+1)\zeta'(\rho)|},\\
\liminf_{x \rightarrow \infty}\frac{1}{x^{3/2-\kappa}}\int_{1}^{x}\frac{M(u)}{u^{\kappa}}du - \frac{2}{\zeta(1/2)}
&\leq - \frac{1}{|\rho(\rho-\kappa+1)\zeta'(\rho)|},
\end{align*}
where if $\rho$ is a multiple zero, we consider as $\d{\frac{1}{|\rho\gamma\zeta'(\rho)|}} = +\infty$, and 
the term $\frac{2}{\zeta(1/2)}$ is missing unless $\kappa = 3/2$.
\end{lemma}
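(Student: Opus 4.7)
I will adapt the Landau-style $\Omega$-argument used in the proof of Lemma \ref{lemSZC}, combined with the Mellin-like transform already appearing inside the proof of Lemma \ref{divIMLem1}. Fix a non-trivial zero $\rho_0 = 1/2 + i\gamma_0$ of $\zeta(s)$ with multiplicity $m = m(\rho_0)$. Under the Riemann Hypothesis, the bound $M(x) \ll x^{1/2+\varepsilon}$ gives $H_\kappa(x) \ll x^{3/2-\kappa+\varepsilon}$ (with $H_\kappa$ as defined in the proof of Lemma \ref{divIMLem1}), so that the transform
\begin{align*}
\tilde\Phi_\kappa(s) := \int_1^\infty H_\kappa(x)\, x^{\kappa - s - 2}\,dx
\end{align*}
is absolutely convergent for $\sigma > 1/2$ and equals there $\frac{1}{s(s-\kappa+1)\zeta(s)} - \frac{2/\zeta(1/2)}{s-\kappa+1}$, the second term being present only when $\kappa = 3/2$ and being designed precisely to cancel the apparent pole of the first term at $s = 1/2$. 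This is the identity already exploited in the proof of Lemma \ref{divIMLem1}, and exhibits $\tilde\Phi_\kappa$ as analytic on an open set containing $s = 1/2$.

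For any constant $c \geq 0$ I set
\begin{align*}
\mathcal{G}_{\kappa,c}(s) &:= \int_1^\infty \bigl(H_\kappa(x) - c\,x^{3/2-\kappa}\bigr)\, x^{\kappa - s - 2}\,dx = \tilde\Phi_\kappa(s) - \frac{c}{s - 1/2},\\
I_{\kappa,c}(s) &:= \int_1^\infty \bigl(H_\kappa(x) - c\,x^{3/2-\kappa}\bigr)\, x^{\kappa - s - 2}\,\bigl(1 + \cos(\phi - \gamma_0 \log x)\bigr)\,dx,
\end{align*}
and, expanding $\cos(\phi - \gamma_0 \log x) = \tfrac{1}{2}\bigl(e^{i\phi}x^{-i\gamma_0} + e^{-i\phi}x^{i\gamma_0}\bigr)$, obtain
\begin{align*}
I_{\kappa,c}(s) = \mathcal{G}_{\kappa,c}(s) + \tfrac{1}{2}\bigl(e^{i\phi}\mathcal{G}_{\kappa,c}(s + i\gamma_0) + e^{-i\phi}\mathcal{G}_{\kappa,c}(s - i\gamma_0)\bigr).
\end{align*}
Since $\gamma_0 \neq 0$, the only singularities of $\mathcal{G}_{\kappa,c}(s \pm i\gamma_0)$ at $s = 1/2$ come from the factor $1/\zeta$ meeting the zero $\rho_0$ (resp.\ $\bar\rho_0$), so at $s = 1/2$ each such term contributes a pole of order exactly $m$.

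Now suppose, for contradiction, that $\limsup_{x \to \infty} H_\kappa(x)/x^{3/2-\kappa} < c < \frac{1}{|\rho_0(\rho_0 - \kappa + 1)\zeta'(\rho_0)|}$. Then $H_\kappa(x) - c\,x^{3/2-\kappa} \leq 0$ for $x \geq K_0$, and since $1 + \cos \geq 0$, the integrand of $I_{\kappa,c}(\sigma)$ is non-positive on $[K_0, \infty)$. Thus $I_{\kappa,c}(\sigma)$ is bounded above as $\sigma \to 1/2^+$, and therefore
\begin{align*}
\limsup_{\sigma \to 1/2^+} (\sigma - \tfrac{1}{2})^m I_{\kappa,c}(\sigma) \leq 0.
\end{align*}
On the other hand, Laurent-expanding $\zeta(w) = \tfrac{\zeta^{(m)}(\rho_0)}{m!}(w - \rho_0)^m + O\bigl((w-\rho_0)^{m+1}\bigr)$ at $w = \rho_0$ yields
\[
\lim_{\sigma \to 1/2^+} (\sigma - \tfrac{1}{2})^m I_{\kappa,c}(\sigma) =
\begin{cases}
\displaystyle -c + \Re\!\left(\frac{e^{i\phi}}{\rho_0(\rho_0 - \kappa + 1)\zeta'(\rho_0)}\right), & m = 1,\\[6pt]
\displaystyle \Re\!\left(\frac{e^{i\phi}\, m!}{\rho_0(\rho_0 - \kappa + 1)\zeta^{(m)}(\rho_0)}\right), & m \geq 2,
\end{cases}
\]
since the simple pole of $\mathcal{G}_{\kappa,c}$ at $s = 1/2$ contributes to the $(\sigma - \tfrac{1}{2})^m$-scaled limit only when $m = 1$. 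Choosing $\phi = \arg\bigl(\rho_0(\rho_0 - \kappa + 1)\zeta^{(m)}(\rho_0)\bigr)$ maximises the trigonometric part to $\frac{m!}{|\rho_0(\rho_0 - \kappa + 1)\zeta^{(m)}(\rho_0)|}$. For $m = 1$ this yields $-c + \frac{1}{|\rho_0(\rho_0 - \kappa + 1)\zeta'(\rho_0)|} \leq 0$, contradicting the choice of $c$; for $m \geq 2$ the limit is strictly positive, contradicting any finite upper bound, which forces $\limsup = +\infty$ in accordance with the convention $\frac{1}{|\rho(\rho - \kappa + 1)\zeta'(\rho)|} = +\infty$ at a multiple zero. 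Letting $c$ decrease to $\frac{1}{|\rho_0(\rho_0 - \kappa + 1)\zeta'(\rho_0)|}$ finishes the limsup claim; the liminf claim follows by the same argument applied to $H_\kappa + \tilde c\,x^{3/2-\kappa}$, which is eventually non-negative whenever $\tilde c > -\liminf_{x\to\infty} H_\kappa(x)/x^{3/2-\kappa}$, and with $\phi$ replaced by $\phi + \pi$.

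The main technical obstacle will be the residue bookkeeping at $s = 1/2$: I need to confirm that no zero $\rho = 1/2 + i\gamma$ of $\zeta$ other than $\rho_0$ and $\bar\rho_0$ contributes a pole to $I_{\kappa,c}(s)$ at $s = 1/2$ (it cannot, since this would require $\gamma = \pm \gamma_0$), and that the cancellation defining $\tilde\Phi_\kappa$ at $s = 1/2$ in the case $\kappa = 3/2$ propagates through the shifts $s \mapsto s \pm i\gamma_0$ without leaving residual singularities at $s = 1/2$ — which it does, because $\gamma_0 \neq 0$ makes $1/(s \pm i\gamma_0 - 1/2)$ regular at $s = 1/2$.
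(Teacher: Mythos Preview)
Your proposal is correct and follows essentially the same Landau-type $\Omega$-argument as the paper: form the Mellin transform of $H_\kappa(x) - c\,x^{3/2-\kappa}$, test it against $1 + \cos(\phi - \gamma_0\log x)$, and compare the residue at $s=1/2$ with the sign forced by the eventual non-positivity of the integrand. Your treatment of the $\kappa = 3/2$ case (absorbing $-2/\zeta(1/2)$ into $H_\kappa$ so that $\tilde\Phi_\kappa$ is already regular at $s=1/2$) and of the multiple-zero case (explicit $(\sigma-1/2)^m$ scaling) is a touch more careful than the paper's, but the method is identical.
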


\begin{proof}

Let $s = \sigma + it$ be a complex number with $\sigma > 1/2$. 
Suppose that $H_{\kappa}(x)\leq cx^{3/2-\kappa}$ for all $x \geq X_0$. 
We define 
\begin{align*}
G_{\kappa}(s) &:= \int_{1}^{\infty}\frac{H_{\kappa}(x) - cx^{3/2-\kappa}}{x^{s + 2 - \kappa}}dx.
\end{align*}
Using integrate by parts, we have
\begin{align}	\label{EQG_{kappa}}
G_{\kappa}(s) 
&= \frac{1}{s(s + 1 -\kappa)\zeta(s)} - \frac{c}{(s - 1/2)}
\end{align}
for $\sigma > 1/2$ because $H_{\kappa}(x) \ll x^{3/2- \kappa + \varepsilon}$ and $M(x) \ll x^{1/2 + \varepsilon}$ hold under the Riemann Hypothesis.
Now, we define that
\begin{align*}
I_{\kappa}(s) := \int_{1}^{\infty}\frac{H_{\kappa}(x) - cx^{3/2-\kappa}}{x^{s + 2 - \kappa}}(1 + \cos(\phi - \gamma\log{x}))dx,
\end{align*}
where $\phi$ is any real number.
Then we find that
\begin{align} \label{EQI_{kappa}}
I_{\kappa}(s) = G_{\kappa}(s) + \frac{1}{2}\l( e^{i\phi}G_{\kappa}(s + i\gamma) + e^{-i\phi}G_{\kappa}(s-i\gamma) \r).
\end{align}
Now, we have
\begin{align}	\label{IEQI}
\limsup_{\sigma \rightarrow 1/2+0}(\sigma - 1/2)I_{\kappa}(\sigma) \leq 0
\end{align}
for any real $\phi$ since $\d{\limsup_{\sigma \rightarrow 1/2+0}I_{\kappa}(\sigma) \not= +\infty}$ holds by the definition $I_{\kappa}(s)$. 
But there exists $\phi$ such that
\begin{align*}
\lim_{\sigma \rightarrow 1/2 + 0}(\sigma - 1/2)I_{\kappa}(\sigma) = \infty
\end{align*}
if $\rho = \frac{1}{2} + i\gamma$ is a multiple zero of $\zeta(s)$.
Hence if $\zeta(s)$ has a multiple zero, then we have
\begin{align*}
\limsup_{x \rightarrow \infty}H_{\kappa}(x) = +\infty.
\end{align*}
Next we assume (SZC).
Then, by \eqref{EQG_{kappa}} and \eqref{EQI_{kappa}}, we have
\begin{align*}
\lim_{s \rightarrow 1/2}(s-1/2)I_{\kappa}(s) = \frac{2}{\zeta(1/2)} - c + \Re\left( \frac{e^{i\phi}}{\rho(\rho-\kappa+1)\zeta'(\rho)} \right),
\end{align*}
and if we take $\phi = \arg(\rho(\rho-\kappa+1)\zeta'(\rho))$, we have
\begin{align*}
\frac{2}{\zeta(1/2)} - c +  \frac{1}{|\rho(\rho-\kappa+1)\zeta'(\rho)|} \leq 0
\end{align*}
by \eqref{IEQI}, where $\frac{2}{\zeta(1/2)}$ is missing unless $\kappa = \frac{3}{2}$.
Hence we obtain
\begin{align*}
\frac{2}{\zeta(1/2)} + \frac{1}{|\rho(\rho-\kappa+1)\zeta'(\rho)|} \leq c.
\end{align*}
Hence we have
\begin{align*}
\limsup_{x \rightarrow \infty}\frac{H_{\kappa}(x)}{x^{3/2-\kappa}} \geq \frac{2}{\zeta(1/2)} + \frac{1}{|\rho(\rho-\kappa+1)\zeta'(\rho)|},
\end{align*}
where the term $\frac{2}{\zeta(1/2)}$ is missing unless $\kappa = \frac{3}{2}$.

Similarly, we have
\begin{align*}
\liminf_{x \rightarrow \infty}\frac{H_{\kappa}(x)}{x^{3/2-\kappa}} \leq \frac{2}{\zeta(1/2)} - \frac{1}{|\rho(\rho-\kappa+1)\zeta'(\rho)|},
\end{align*}
where the term $\frac{2}{\zeta(1/2)}$ is missing unless $\kappa = \frac{3}{2}$.
\end{proof}

\begin{proof}[Proof of Theorem \ref{IM}]

By Lemma \ref{divIMLem2}, we may assume (SZC). We denote
\begin{align*}
H_{\kappa}(x) &:= \int_{1}^{x}\frac{M(u)}{u^{\kappa}}du\\
G_{\kappa}(s) &:= \int_{1}^{\infty}\frac{H_{\kappa}(x) - cx^{3/2-\kappa}}{x^{s + 2 - \kappa}}dx\\
{I_{\kappa}}^*(s) &:= \int_{1}^{\infty}\frac{H_{\kappa}(x) - cx^{3/2-\kappa}}{x^{s+2-\kappa}}\prod_{k=1}^{K}(1 + \cos(\phi_{k} - \gamma_{k}\log{x}))dx
\end{align*}
as in the previous section. Then we find that
\begin{align*}
{I_{\kappa}}^*(s) = G_{\kappa}(s) + \frac{1}{2}\sum_{k=1}^{K}
\left( e^{i\phi_{k}}G_{\kappa}(s+i\gamma_{k}) + e^{-i\phi_{k}}G_{\kappa}(s-i\gamma_{k}) \right) + J_{\kappa}(s)
\end{align*}
where $J_{\kappa}(s)$ is a linear combination of $G_{\kappa}$ at arguments of the form 
\begin{align*}
	s + i\sum_{k=1}^{K}\varepsilon_{k}\gamma_{k}
\end{align*}
with more than one of ${\varepsilon}'_{k}$ are non-zero. 
Assuming the Linear Independence Conjecture, we see that the function $J_{\kappa}(s)$ does not have a pole at $s = 1/2$. 
Hence we have
\begin{align*}
\lim_{s \rightarrow 1/2}(s - 1/2){I_{\kappa}}^*(s) 
= \frac{2}{\zeta(1/2)} - c + \sum_{k=1}^{K}\Re\left( \frac{e^{i\phi_{k}}}{\rho_{k}(\rho_{k} - \kappa + 1)\zeta'(\rho_{k})} \right),
\end{align*}
where the term $\frac{2}{\zeta(1/2)}$ is missing unless $\kappa = \frac{3}{2}$.
Considering in the same manner as in the proof of Lemma \ref{divIMLem2}, we have 
\begin{align*}
\frac{2}{\zeta(1/2)} - c + \sum_{k=1}^{K} \frac{1}{|\rho_{k}(\rho_{k} - \kappa + 1)\zeta'(\rho_{k})|} \leq 0
\end{align*}
if we take $\phi_{k} = \arg(\rho_{k}(\rho_{k} - \kappa + 1)\zeta'(\rho_{k}))$.
Hence we have
\begin{align*}
\frac{2}{\zeta(1/2)} + \frac{1}{2}\sum_{\gamma} \frac{1}{|\rho(\rho - \kappa + 1)\zeta'(\rho)|} \leq c
\end{align*}
since $K$ is any positive integer.
Similarly, we have inequality (\ref{IM2}).
\end{proof}

\begin{acknow*}
The author expresses his gratitude to Prof.\ Kohji Matsumoto, Prof.\ Isao Kiuchi, Prof.\ Masatoshi Suzuki, Prof.\  Hirotaka Akatsuka,
Prof.\ Sumaia Saad Eddin, Mr.\  Tomohiro Ikkai and Mr.\  Sohei Tateno for their helpful comments.
\end{acknow*}




\vspace{3em}

\end{document}